\let\mathcal\mathscr
\newcommand{\set}[2]{\left\lbrace #1 : #2 \right\rbrace}
\newcommand{\real}{\mathbb{R}}
\newcommand{\norm}[1]{\left\lVert#1\right\rVert}
\numberwithin{equation}{section}
\newtheorem{theorem}{Theorem}[section] 
\newtheorem{lemma}[theorem]{Lemma}
\newtheorem{proposition}[theorem]{Proposition}
\newtheorem{corollary}[theorem]{Corollary}
\newenvironment{claim}[1]{\par\noindent{\bf Claim.}\space#1}{}
\newenvironment{claimproof}[1]{\par\noindent{\it Proof of Claim.}\space#1}{\leavevmode\unskip\penalty9999 \hbox{}\nobreak\hfill\quad\hbox{}}
\theoremstyle{definition}
\newtheorem{remark}[theorem]{Remark}
\newtheorem{definition}[theorem]{Definition}
\newtheorem{problem}[theorem]{Problem}
\begin{document}
\title{The Daniell Integral: Integration without measure}
\author{Adriaan de Clercq}
\maketitle

\begin{abstract}
    In his 1918 paper ``A General Form of Integral'',
    Percy John Daniell developed a theory of integration capable of 
    dealing with functions on arbitrary sets. Daniell’s method differs from the measure-theoretic
     notion of integration. Linear functionals 
     over vector lattices were considered as the fundamental 
     objects on which he built the theory, rather than measures over sets.  
     In this document, we explore Daniell’s concept of integration 
    and how his theory relates to the measure-theoretic notion of integration. 
    We paint a picture of the historical context surrounding Daniell’s ideas. 
    Furthermore, we present examples due to Norbert Wiener, where the Daniell 
    integral was employed on spaces too general for the standard integration 
    techniques of the time.

\end{abstract}

\tableofcontents
\newpage

\section{Introduction}
In this document, we review the theory of integration developed by Percy John Daniell (1889-1946).
His developments came in the period following
the success of Lebesgue's theory of integration based on measure. 
However, measure theory was still in its 
infancy, and an abstract theory of measure had not yet been developed. 
Consequently, as an attempt to generalise the theory of integration, 
Daniell looked at the integral, rather than measure, as the principle object of study. 
In his series 
of papers which began with the 1918 paper ``A General Form of Integral'' \cite{daniell_1}, Daniell provided a
method for extending the ideas of integration to sets of utmost generality. 

We start with, in Section \ref{Section:history}, a brief overview of the history of integration and the motivating 
works behind the important theorems in the 19th- and the early 20th- century 
mathematics. This serves to provide the 
historical context in which Daniell published his papers. 
In Section \ref{section:extension}, we give a full treatment of Daniell's integral as was done in his paper 
``A General Form of Integral''.
We explore, in Section \ref{section:measure}, the conditions under which Daniell's integral is equivalent 
to the modern theory of measure, with the main result being the Daniell-Stone Theorem (Theorem \ref{thm:DS}). 
Daniell was also interested in generalising the Stieltjies integral. We 
discuss such generalisation in Section \ref{section:Sintegral}.  
Furthermore, we prove that under suitable restriction, a linear functional can be decomposed as the difference 
of two positive linear functionals. 
Finally, in Section \ref{Section:Wiener}, we give a brief overview of work done by 
Norbert Wiener (1894-1964) 
which utilised the Daniell Integral.

\section{Historical Background} \label{Section:history}
A brief history of the theory of integration is given in this section. For a more 
extensive overview of the topic we refer the readers to \cite[Chapter~9]{history_analysis} and \cite{Hawkins}.

\subsection{Definition of a function}
The concept of integration and differentiation was understood 
on an intuitive level since the works of 
Gottfried Wilhelm Leibniz (1646-1716) and Isaac Newton (1642-1726/27). 
However, they did not use the term functions. 
Rather, they referred to quantities and the rates of changes of these quantities.
Furthermore, intuitive geometrical notions of limit procedures sufficed as a justification for 
the theory. 
Therefore, the notion of integral was synonymous with area, 
and the notion of derivative was synonymous with tangent. We refer the readers to \cite[Chapter~3]{history_analysis} for 
more details surrounding the development of calculus.  

It was not until Leonhard Euler (1707-1783), that the idea of a function was beginning to form. 
In his 1748 book {\it Introductio in analysin infinitorum}, he defined a function to be an analytical expression 
containing constants and variables. Later on, in his 1755 text 
{\it Institutiones calculi differentialis}, he defined a function to be 
a quantity which depends on another.
Augustin-Louis Cauchy (1789-1857) later used Euler's definition
when he started a new age of rigour with his  
textbook {\it Cours d'analyse de l'\'Ecole Royale Polytechnique} written in 1821.
A year later in 1822, 
Joseph Fourier (1768-1830) published his own definition of a function in his main work 
{\it Th\'eorie analytique de la Chaleur}. The definition was as follows:
\begin{quotation}
    ``In general, the function $f(x)$ represents a succession of values or ordinates each of
which is arbitrary. An infinity of values being given to the abscissa $x$, there are an
equal number of ordinates $f(x)$. All have actual numerical values, either positive
or negative or nul. We do not suppose these ordinates to be subject to a common
law; they succeed each other in any manner whatever, and each of them is given
as it were a single quantity." (Fourier, 1822.) 
\end{quotation}
In essence, Fourier defined a function in terms of its graph. Although this still had the 
geometrical flavour of the previous century, this definition is general enough 
as to include the stranger counterexamples that drove mathematicians of the time to begin treating the 
subject with the necessary care.

In his work, Fourier decomposed a function $f$ into the series
\begin{align}
    f(x)= \frac{1}{2} a_0 + \sum_{n=1}^\infty (a_n \sin (nx)+b_n \cos (nx) ) \label{eqn:fourier_series} 
\end{align}

where the coefficients are given by 
\[
    a_0= \frac{1}{\pi} \int_{-\pi}^\pi f(x) \, dx ,
\]
\[
    a_n= \frac{1}{\pi} \int_{-\pi}^\pi f(x) \sin (nx) \, dx, \text{ and }
    b_n= \frac{1}{\pi} \int_{-\pi}^\pi f(x) \cos (nx) \, dx, \text{ for } n \in \mathbb{N}.  
\]
In his derivations, he assumed that the interchange between an infinite series and integration 
is valid, i.e.that the following can be done to a series of functions
$\sum_{n=1}^\infty s_n(x)$:
\[
    \int_a^b \sum_{n=1}^\infty s_n(x) \, dx = \sum_{n=1}^\infty \int_a^b s_n(x) \, dx.
\]

This brought up two questions. Firstly, what is the meaning of $\int_a^b f(x) \, dx$ for 
`arbitrary' functions $f$ as given by Fourier's definition? Secondly, when can the limit 
of a sequence of functions be interchanged with the integral? 

\subsection{Integrating arbitrary functions}

Cauchy was the first to give an answer to the first question, at least in part. 
In his  {\it Cours d'analyse} he defined functions as variables depending 
on other variables, and he
gave a precursor to the definition of a continuous function as we know it today. In his 1823 work 
{\it R\'esum\'e des lecons donn\'ees \`a l'\'Ecole Royale Polytechnique sur le calcul infinit\'esimal},
Cauchy defined the integral of a continuous
function by first partitioning the interval $[a,b]$ into $n$ parts 
$$a=x_0 <x_1<x_2< \cdots <x_n = b.$$
Thereafter, he would consider the sums 
\begin{align}
    S = \sum_{k=1}^n f(x_{k-1})(x_k-x_{k-1}). \label{eqn:riemann_sum}
\end{align}
Today we would recognise these as `left Riemann sums' and he showed that, as a consequence 
of his definition of continuity, these sums tend to a definite limit\footnote{ Cauchy defined the limit in {\it Cours d'analyse} as: 
``When the values successively attributed to the same variable approach a fixed
value indefinitely, in such a way as to end up by differing from it as little as one
could wish, this last value is called the limit of all the others.''}
 as the partitions are refined. 
He defined the integral to be this definite limit. 

Another analyst who gave careful attention to the 
questions stemming from Fourier's work, was the French mathematician 
Peter Gustav Lejeune Dirichlet (1805-1859). 
He studied these questions with the same rigour as pioneered by Cauchy, and proved 
conditions for when a function can be represented by a 
Fourier series as given in \eqref{eqn:fourier_series}. He prudently restricted 
himself to functions with only finitely many discontinuities, as he was 
already aware of nowhere continuous functions, such as the following, which later 
took on the name `Dirichlet monster':
\[
    f(x)=  \begin{cases}
        0, &\text{ if } x \in \mathbb{Q},\\
        1, &\text{ if } x \in \mathbb{R} \backslash \mathbb{Q}.
    \end{cases}
\]
Dirichlet wanted to know how far the assumptions of continuity 
can be weakened and if the 
integral can be extended to include functions with infinitely 
many discontinuities. 


Georg Friedrich Bernhard Riemann (1826-1866) studied 
in Berlin from 1847 until 1849, finishing 
his doctoral dissertation in 1851. Riemann was 
heavily influenced by Dirichlet, who moved to Berlin 
in 1931 to take up a professorship position. 
Under the guidance of Dirichlet, 
Riemann came up with the definition of an integral, 
which later would come to be known as the `Riemann integral'.
In his 1854 essay\footnote{ This was work done in requirement for 
Riemann's habilitation and was later was published in 1867, 
after his death. A habilitation is a required 
qualification for teaching in some European universities. 
Part of the requirements for a habilitation is a thesis,
similar to a doctoral dissertation,
based on independent scholarship.
} 
{\it \"Uber die Darstellbarkeit einer Function durch eine trigonometrische Reihe},
he detailed this integral. 
He, like Cauchy, considered the interval $[a,b]$ and partitioned it by
\[
    a=x_0 <x_1<x_2< \cdots< x_n = b.
\]
Thereafter, for $k\in \lbrace 1,2,\cdots, n \rbrace$, 
defining $\delta_k=x_k-x_{k-1}$ and letting
$\varepsilon_k \in [0,1]$ be arbitrary, he considered the sums
\[
    S = \sum_{k=1}^n f(x_{k-1}+\varepsilon_k \cdot \delta_k)\cdot \delta_k.
\]
If  the sums tend to  
a definite limit regardless how one goes about choosing the $\varepsilon_k$'s, 
he defined that limit to be the integral of $f$. 

Also appearing in this work, is an example of an integrable function 
(in Riemann's sense) with infinitely many 
discontinuities (Figure \ref{fig:riemann}), 
thus satisfying Dirichlet's original desires of 
extending the concept of integration. 
At the time, this work was seen as being of the utmost generality. 
\begin{figure}
    \includegraphics[width=0.75\linewidth]{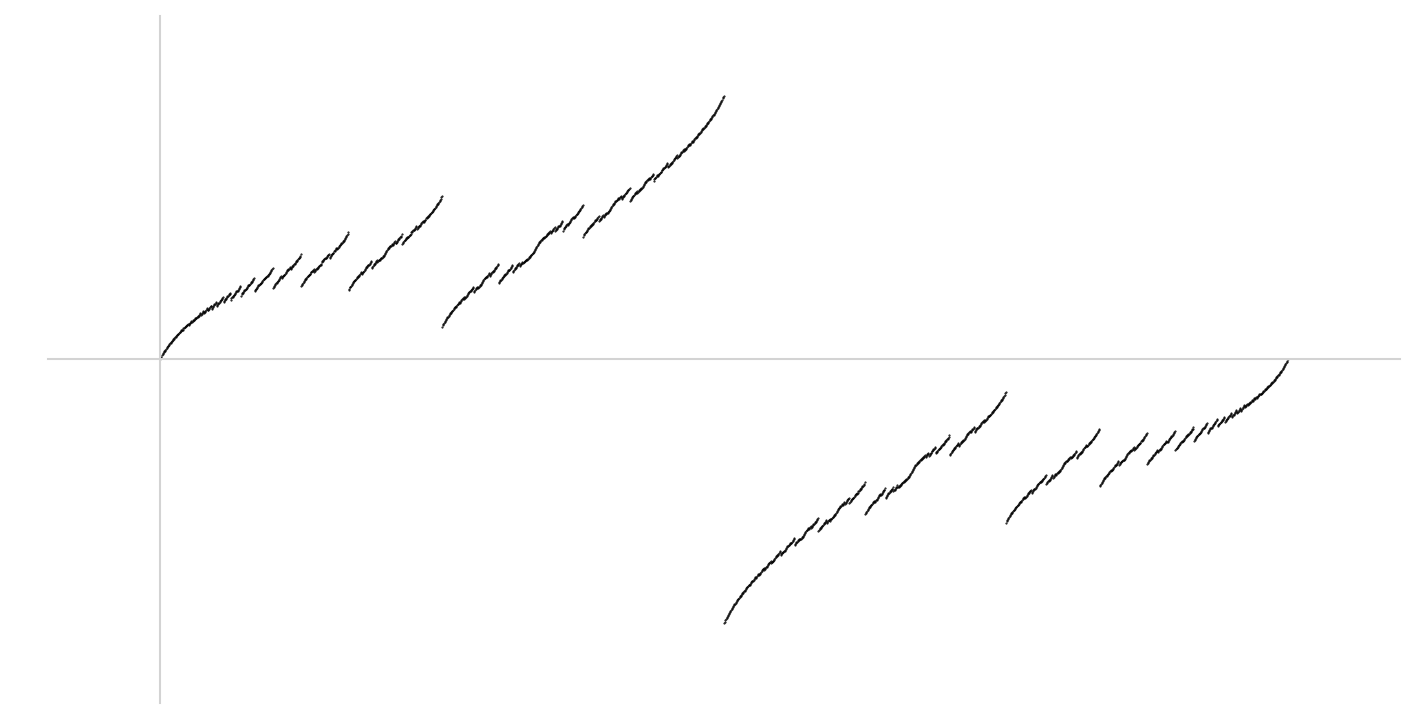}
    \caption{Riemann's example of an integrable function with a dense set of 
    discontinuities (we refer the readers to \cite[Section~9.3, p. 265-266]{history_analysis}
     for a detailed construction).}\label{fig:riemann}
\end{figure}

\subsection{Development of Riemann's ideas}
Enter the French mathematician 
Jean-Gaston Darboux (1842-1917). It is important to 
note that Darboux was a big advocate for the advancement of rigour in the 19th-century 
mathematics, producing numerous 
counterexamples designed to showcase the problem 
with blindly trusting intuitive notions in 
analysis \cite[Section~9.3, p. 269]{history_analysis}. 

In his 1875 work {\it M\'emoire sur les fonctions discontinues}, 
Darboux developed necessary and 
sufficient conditions for a function to be 
Riemann integrable (what we refer to today 
as Darboux integrable). He explicitly only considered bounded functions $f$.
Given a partition $P=\lbrace x_0,x_1,\cdots x_n \rbrace$ 
of the interval $[a,b]$ such that 
$$ a=x_0<x_1<x_2<\cdots < x_n=b,$$
he then defined the well-known upper
and lower sums. If for $i \in \lbrace1,2,\cdots ,n \rbrace$, $M_i$ 
and $m_i$ were given by
$$M_i:= \sup \set{f(x)}{x_{i-1}\leq x\leq x_i}$$
and 
$$m_i:=\inf \set{f(x)}{x_{i-1}\leq x\leq x_i},$$
then the upper sum was defined by 
$$ M(P):=\sum_{i=1}^n M_i (x_i-x_{i-1})$$
and the lower sum by 
$$  m(P):=\sum_{i=1}^n m_i (x_i-x_{i-1}).$$
He also defined 
\begin{align}
    \Delta(P):= M(P)-m(P)=\sum_{i=1}^n(M_i-m_i)(x_i-x_{i-1}). \label{eqn:Delta} 
\end{align}

Darboux was not the first to come up with this definition. 
Before Darboux, Riemann considered the idea of defining the 
`oscillation' of the function 
in the intervals $[x_{i-1},x_i]$ given by $M_i-m_i$. 
Riemann called this oscillation of $f$ in the interval 
$[x_{i-1},x_i]$ `the difference between the 
largest and smallest value in this interval'. However, 
Darboux was the first to treat this concept with 
the required rigour.  Today we know that a function $f$ might not attain 
its maximum/minimum in a given interval, and Darboux 
made note of this in his work, explicitly
referring to the supremum and infimum\footnote{
    The concept of supremum and infimum dates 
    back to Bernard Bolzano (1781-1848). We note that 
    Darboux was not the first to use these concepts. 
}
and exemplifying  
Darboux's careful attention to rigour for the time. 
Given these definitions, he defined the upper integral $\overline{\int_a^b}f(x)\, dx$ to 
be the limit of $M(P)$ and the lower integral $\underline{\int_a^b}f(x)\, dx$ 
the limit of $m(P)$ as the partition $P$ is refined.

He also reformulated Riemann's ideas. 
If we have a partition of $[a,b]$ given by
\[
a= x_0 < x_1 <x_2 < \cdots <x_n=b,
\]
then for $i \in \lbrace 1,2,\cdots, n \rbrace$, let $\delta_i:= x_i - x_{i-1}$, and $\theta_i \in [0,1]$. 
He then defined, what is now known as the `Darboux sum' $S$, as 
\[
    S= \sum_{i=1}^n \delta_i f(x_{i-1}+\theta_i \delta_i).
\]
If the reader compares these sums with \eqref{eqn:riemann_sum}, one notes that 
these are essentially the same. 
However, he took Riemann's ideas further by proving the following theorem\footnote{
    Here, we paraphrase the statement of the original result and write it in a modern 
    setting.}: 
\begin{theorem}
    The Darboux sum (and hence the Riemann sum) of a function $f$ converge 
    if and only if $\Delta{P} \rightarrow 0$ as $P$ is refined (with $\Delta P$ given by \eqref{eqn:Delta}), 
    which is equivalent to 
    \[
        \overline{\int_a^b}f(x)\, dx=\underline{\int_a^b}f(x)\, dx.
    \]
\end{theorem}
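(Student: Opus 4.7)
The plan is to prove the two equivalences separately. First I would establish that, as the partition is refined, the upper sums $M(P)$ are monotonically non-increasing and the lower sums $m(P)$ are monotonically non-decreasing. Indeed, if a partition $P'$ is a refinement of $P$, then on each subinterval $[x_{i-1},x_i]$ of $P$, replacing the single term $M_i(x_i - x_{i-1})$ by the sum $\sum_j M_{i,j}(x_{i,j}-x_{i,j-1})$ corresponding to the finer subdivision can only decrease the sum, since each $M_{i,j} \le M_i$. The analogous argument applies to $m(P)$. Since $f$ is bounded, both $M(P)$ and $m(P)$ converge to well-defined limits as $P$ is refined, which are precisely $\overline{\int_a^b} f(x)\, dx$ and $\underline{\int_a^b} f(x)\, dx$ by definition. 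Therefore $\Delta(P) = M(P) - m(P)$ tends to $\overline{\int_a^b} f(x)\,dx - \underline{\int_a^b} f(x)\,dx$, and this limit vanishes if and only if the two integrals coincide, establishing the second equivalence.

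For the first equivalence, I would exploit the obvious inequality $m(P) \le S \le M(P)$, valid for any choice of intermediate points $x_{i-1} + \theta_i \delta_i$, since $m_i \le f(x_{i-1}+\theta_i \delta_i) \le M_i$ on each subinterval. Assuming $\Delta(P) \to 0$, both $M(P)$ and $m(P)$ share a common limit, and a squeeze argument forces $S$ to converge to that same limit, independent of the choice of $\theta_i$. Conversely, assume the Darboux sums $S$ converge to some $L$ for every choice of intermediate points. For any $\varepsilon>0$, using the definition of supremum and infimum, I would choose $\theta_i$ so that $f(x_{i-1}+\theta_i \delta_i) > M_i - \varepsilon/(b-a)$, producing a sum $S^+$ with $M(P) - \varepsilon \le S^+ \le M(P)$. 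A symmetric choice yields $S^-$ with $m(P) \le S^- \le m(P) + \varepsilon$. Since both $S^+$ and $S^-$ tend to the same limit $L$, it follows that $M(P)$ and $m(P)$ have limits differing by at most $2\varepsilon$, and since $\varepsilon$ is arbitrary, $\Delta(P) \to 0$.

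The main obstacle I anticipate is the converse direction of the first equivalence: carefully justifying that the intermediate points can be chosen so as to approximate the supremum and infimum on each subinterval simultaneously, and then confirming that the resulting sequence of tagged partitions is admissible in the sense that the Darboux sum is indeed required to converge along it. This is where Darboux's explicit use of supremum and infimum (rather than maximum and minimum) becomes essential, as Fourier-style functions need not attain their extrema on each subinterval. Once this approximation step is handled, the rest of the argument consists of routine monotonicity and squeeze arguments.
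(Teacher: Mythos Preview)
The paper does not actually prove this theorem. It appears in the historical background section as a statement of Darboux's 1875 result, immediately followed by ``Using this characterisation of integrability he proved the following results,'' with no intervening proof. So there is no paper proof to compare your proposal against.

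Assessing your proposal on its own merits: it is the standard argument and is essentially correct. The second equivalence is handled cleanly via monotonicity of $M(P)$ and $m(P)$ under refinement, and the forward direction of the first equivalence is a straightforward squeeze. For the converse you correctly isolate the only delicate point, namely choosing tags to approximate the suprema and infima. One small presentational issue: writing that ``both $S^+$ and $S^-$ tend to the same limit $L$'' is slightly loose, since your $S^+$ and $S^-$ depend on both the partition $P$ and on the auxiliary $\varepsilon$, so they are not fixed nets along which you can take a limit. The cleaner way to phrase it is: the hypothesis says that for every $\eta>0$ there is a partition $P_0$ such that \emph{every} Darboux sum over any refinement of $P_0$ lies within $\eta$ of $L$; in particular your $S^+$ and $S^-$ do, and combining this with $M(P)-\varepsilon \le S^+$ and $S^- \le m(P)+\varepsilon$ gives $\Delta(P) \le 2\eta + 2\varepsilon$. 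Taking $\eta = \varepsilon$ finishes it. This is exactly the idea you describe, just with the quantifiers made explicit.
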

Using this characterisation of integrability he proved the following results:
\begin{itemize}
    \item Every continuous function is Darboux (Riemann) integrable. 
    \item If $F(x)= \int_a^xf(t)\, dt$ where $f$ is Darboux (Riemann) 
    integrable, then $F$ is continuous.
    \item Let $F(x)= \int_a^xf(t)\, dt$ where $f$ is Darboux (Riemann) 
    integrable. If 
        $f$ is continuous at $x_0 \in [a,b]$, then $F$ is 
        differentiable at $x_0$ and 
        $F'(x_0)=f(x_0)$.
\end{itemize}
In the same publication, he also showed the following theorem, which finalised a
rigorous proof of the fundamental theorem of calculus. 
\begin{theorem}\label{thm:ftc2}
    If $F$ is differentiable on $[a,b]$, with a bounded and integrable derivative $f$, 
    then $F(x)-F(a)= \int_a^x f(t) \, dt$ for all $x \in [a,b]$. 
\end{theorem}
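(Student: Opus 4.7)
The plan is to apply the Mean Value Theorem on a partition and then compare the resulting telescoping sum with Darboux's upper and lower sums, using the characterisation of Riemann integrability that was established just before the statement.

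First I would fix $x \in [a,b]$ and an arbitrary partition $P = \{x_0, x_1, \ldots, x_n\}$ of $[a,x]$ with $a = x_0 < x_1 < \cdots < x_n = x$, and set $\delta_i = x_i - x_{i-1}$. Since $F$ is differentiable on $[a,b]$, the Mean Value Theorem applies on each subinterval $[x_{i-1}, x_i]$ and produces a point $\xi_i \in (x_{i-1}, x_i)$ with $F(x_i) - F(x_{i-1}) = F'(\xi_i)\, \delta_i = f(\xi_i)\, \delta_i$. Summing over $i$ and telescoping gives
\[
F(x) - F(a) = \sum_{i=1}^n f(\xi_i)\, \delta_i.
\]
The crucial observation is that the left-hand side does not depend on $P$, whereas the right-hand side is a Riemann-type sum.

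Next I would sandwich this sum between the Darboux upper and lower sums. Because $\xi_i \in [x_{i-1}, x_i]$, we have $m_i \leq f(\xi_i) \leq M_i$, and hence $m(P) \leq F(x) - F(a) \leq M(P)$ for \emph{every} partition $P$ of $[a,x]$. Since $f$ is bounded and Riemann (Darboux) integrable on $[a,x]$, refining $P$ forces both $m(P)$ and $M(P)$ to converge to the common value $\int_a^x f(t)\, dt$, and the previous theorem (characterising integrability via $\Delta(P) \to 0$) is precisely what licenses this. A squeeze then yields $F(x) - F(a) = \int_a^x f(t)\, dt$, as required.

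I expect the main obstacle to be essentially bookkeeping rather than a genuine difficulty: one must be careful that the Mean Value Theorem really applies (this uses differentiability on all of $[a,b]$, including the endpoints of each subinterval), and one must justify that refining $P$ drives both Darboux sums to the integral simultaneously — this is exactly where the hypothesis that $f$ is bounded and integrable is used, via the characterisation $\overline{\int_a^b} f = \underline{\int_a^b} f$. The boundedness is needed to ensure the Darboux framework even makes sense, while integrability supplies the squeeze. No interchange of limits or further analytic machinery is required.
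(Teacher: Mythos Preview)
Your argument is correct and is the standard classical proof: apply the Mean Value Theorem on each subinterval of a partition, telescope to obtain $F(x)-F(a)$ as a tagged Riemann sum, and squeeze between the Darboux lower and upper sums using the integrability hypothesis. Note, however, that the paper does not actually supply a proof of this theorem; it is stated purely as a historical result attributed to Darboux's 1875 \textit{M\'emoire}, so there is nothing to compare against. Your proof is precisely the argument one would expect in this Darboux setting, and it uses exactly the characterisation $\overline{\int_a^b} f = \underline{\int_a^b} f$ that the paper records just before the statement.
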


It is in the clause `bounded and integrable derivative' in the above theorem 
where we encounter the first problem with Riemann's integral. 
The process 
of differentiation was not yet completely reversible.
A differentiable function 
$F$ might produce a derivative $F'$ which is not Riemann integrable.  In 
1881, the Italian mathematician Vito Volterra (1860-1940) published an 
example of a function $V$ 
whose derivative $V'$ is bounded but not Riemann 
integrable (Figure \ref{fig:Volterra}).
The function $V$ is constructed in a way that produces discontinuities of $V'$ on 
the Smith-Volterra-Cantor\footnote{
    Henry John Stephen Smith (1826-1883) was a British mathematician who published work on 
    the Riemann integral, giving examples of when the Riemann integral fails. Of note is his 
    example, published in 1875, of a `meagre' set that has measure zero. This construction 
    resembles that of the Cantor set, which first appeared in 1883 and was named after the father of modern set theory: 
    Georg Ferdinand Ludwig Philipp Cantor (1845-1918). 
} set, a nowhere dense set with positive 
measure (we refer the readers to \cite[Section~3.1]{Hawkins} 
for the detailed construction of the set).  
 
\begin{figure} 
    \includegraphics[width=\linewidth]{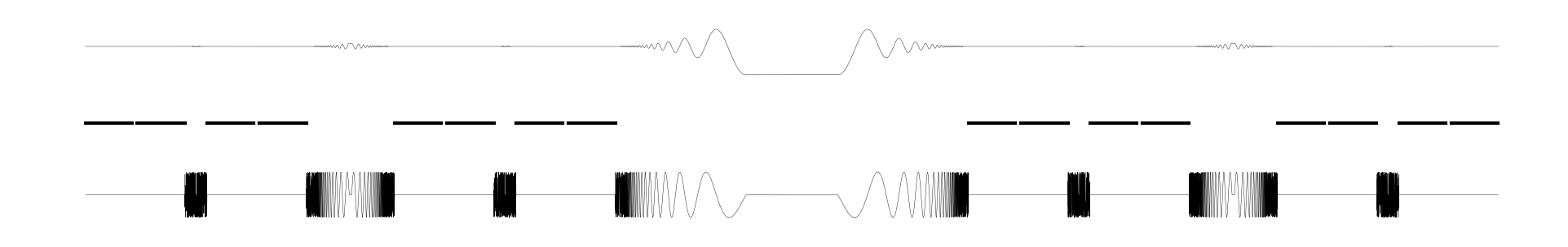}
    \caption{The first three iterations in the construction of the Volterra function. 
    The top is the function $V$. The middle is the Smith-Volterra-Cantor Set. 
    The bottom is the derivative $V'$.}\label{fig:Volterra}
\end{figure}

The second problem with the Riemann integral has to do with the original 
assumption made by Fourier: the exchange of the limit and integral. 
This, in general, is not possible, even with the power of the 
modern Lebesgue integral. An example of this was given by Darboux. 
Consider the telescoping series given by 
\[
    f(x)=-2xe^{-x^2}=\sum_{n=1}^\infty 
        \left[ -2n^2xe^{-n^2 x^2} +2(n+1)^2 x e^{-(n+1)^2 x^2}\right] . 
\]
Note that 
\[
    \int_0^1 f(x) \, dx=  \int_0^1 -2xe^{-x^2} \, dx 
        =  e^{-x^2} \bigg|_0^1= \frac{1}{e}-1,
\]
and 
\begin{align*}
    \sum_{n=1}^\infty \int_0^1\left(-2n^2xe^{-n^2 x^2} +2(n+1)^2 x e^{-(n+1)^2 x^2}\right)\, dx
    &=\sum_{n=1}^\infty \left[(e^{-n^2}-1)-(e^{-(n+1)^2}-1) \right]\\
    &=\sum_{n=1}^\infty \left[e^{-n^2}-e^{-(n+1)^2}\right]=\frac{1}{e} .
\end{align*}
This holds regardless of which integral we use, 
and hence we cannot expect to exchange the integral and 
sum in all cases. However, it was shown by Cesare Arzel\`a (1847-1912) in 
1885 (we refer the readers to \cite[Section~4.4, p. 117]{Hawkins}) 
that if a series of uniformly bounded Riemann integrable 
functions converge uniformly 
towards a Riemann integrable limit, then 
the limit and integral may be interchanged. It is again important to note 
the requirement that 
the limit needs to be Riemann integrable, as a bounded 
sequence of Riemann integrable functions may not 
converge to a Riemann integrable one. 

The final important problem with the Riemann integral comes up when one tries to extend the ideas to 
higher dimensions. Consider a function $f: \mathbb{R}^2\rightarrow \mathbb{R}$. 
Given a domain $A \subset \mathbb{R}^2$, what would be the meaning of 
\[
    \int_A f(x,y) \, dA?  
\]
When considering one dimensional integrals, one typically only considers 
integrals over intervals. Here one begins to 
question the meaning of integrals 
over arbitrary sets. The first thing one may consider 
is covering the set $A$ with rectangles
and then defining a Riemann sum over these rectangles 
and `refining the mesh' (Figure \ref{fig:Jordan}). However, this 
may not always work, as one may not be able to `fit' any 
rectangles inside our set $A$. 
One may be tempted to define the admissible sets to be 
those whose boundary is given by 
a continuous closed curve, but the problems with this 
notion was soon made apparent in
1890 when Giuseppe Peano (1858-1932) discovered the   
`Peano curve', a surjective continuous function 
$P:[0,1]\rightarrow [0,1]^2$. 

\subsection{The beginnings of measure}

The French mathematician Marie Ennemond Camille 
Jordan (1838-1922) was the first to tackle 
the issue of extending the Riemann integral to higher dimensions. 
In his 1892 work {\it Remarques sur les int\'egrates d\'efinies},
Jordan first went about considering a class of sets over which the definition 
of the integral would make sense. 
 
\begin{figure} 
    \minipage{0.32\textwidth}
        \includegraphics[width=\linewidth]{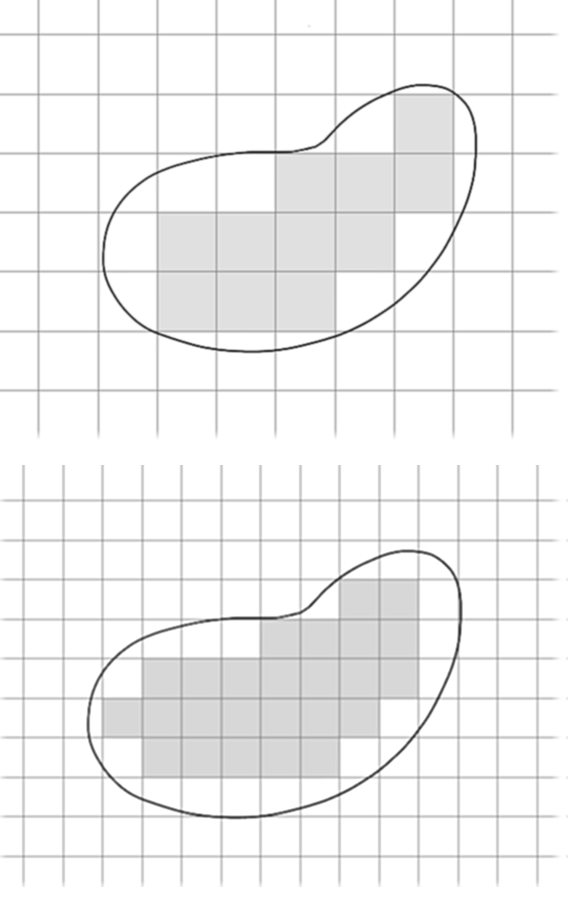}
        \caption*{$S$}
    \endminipage\hfill
    \minipage{0.32\textwidth} 
        \includegraphics[width=\linewidth]{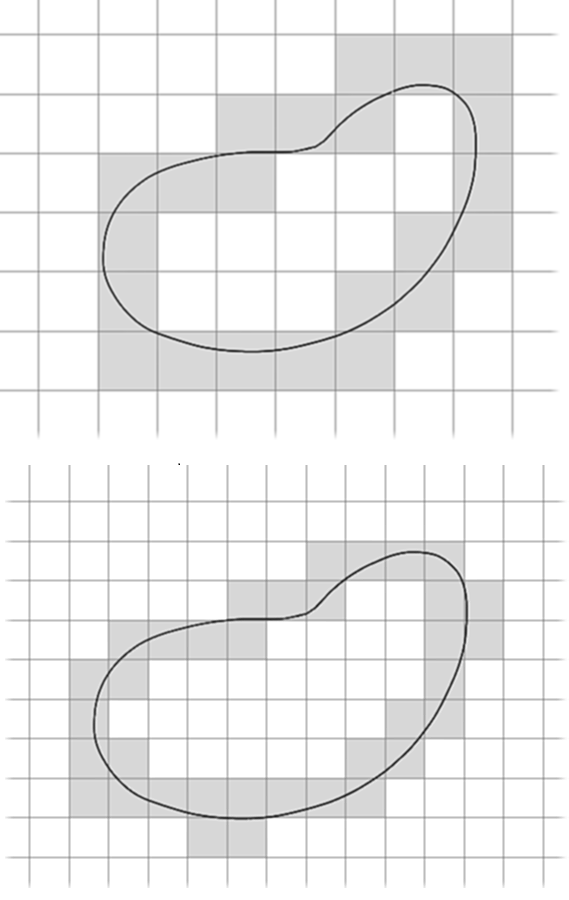}
        \caption*{$S'$}
    \endminipage\hfill
    \minipage{0.32\textwidth}%
        \includegraphics[width=\linewidth]{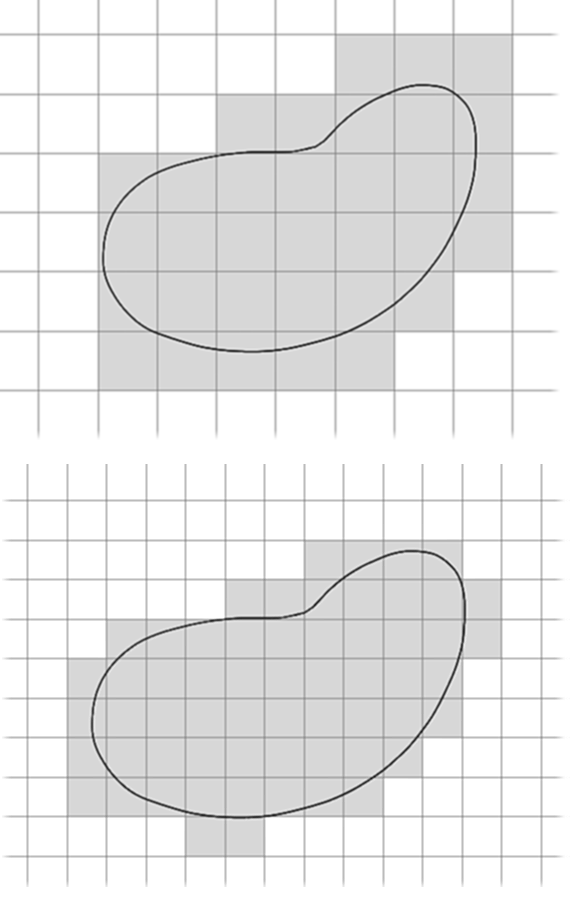}
        \caption*{$S+S'$}
    \endminipage
    \caption{Refining the mesh. }\label{fig:Jordan}
\end{figure}

Jordan's definition of integral starts with an early notion of measure.
For a set $E$ in the plane and a partition $P$, 
let $S$ be 
the sum of the areas of the rectangles defined by $P$ which are
strictly contained in $E$, and $S'$ be 
the sum of the areas of the rectangles containing points of 
both $E$ and its complement. 
The quantity $S+S'$ is then the sum of the areas of the rectangles covering $E$. 
The inner content $\underline{\mathcal{I}}(E)$  is defined to 
be the supremum of the sums $S$ 
as the partition $P$ is refined and similarly, the outer 
content $\overline{\mathcal{I}}(E)$ is defined to be 
the infimum of the sums $S+S'$ as the partition $P$ is refined. 
A set $E$ is (Jordan) {\it measurable} 
when $\underline{\mathcal{I}}(E)=\overline{\mathcal{I}}(E)$, 
and the common value 
is called the {\it content} of the set $E$ and is denoted $\mathcal{I}(E)$. 
Note that 
this concept of measurability is independent of dimension, 
and that it can be applied
to intervals as well.

Once the notion of a (Jordan) measurable set was developed, 
Jordan continued extending the definition 
of the Riemann integral to measurable sets. In his work, the 
influence of Darboux was clearly 
seen. Given a measurable set $E$, and a partition $P$ of $E$ into $n$ disjoint 
measurable sets 
\[
P:= \lbrace E_1, E_2, \cdots, E_n \rbrace,    
\]
for $k \in \lbrace 1,2,\cdots, n \rbrace$, let $M_k:= \sup \set{f(x)}{ x\in E_k}$ 
and $m_k=\inf \set{f(x)}{x \in E_k}$. Similar to Darboux's integral,
 $S(P)$ and $s(P)$ are defined to be the sums 
\[
    S(P):= \sum_{k=1}^n M_k \mathcal{I}(E_k), \text{ and }  
\]
\[
    s(P):= \sum_{k=1}^n m_k \mathcal{I}(E_k).  
\]
The upper and lower sums $\overline{\int_E} f\, dA$ 
and $\underline{\int_E} f \, dA$ are again similarly defined 
to be the limit of $S(P)$ and $s(P)$ as the 
partition $P$ is refined, and the function $f$ is said to be
integrable over $E$ when these 
values coincide. The common value is denoted by $\int_E f \, dA$. It is here 
that the relationship between the integral and the measure of 
sets became apparent. The question now becomes how far one can extend the class 
of measurable sets. In 1893, Jordan incorporated his 
ideas of integration and measure 
into the second edition of his   textbook 
{\it Cours d'Analyse de l'\'Ecole Polytechnique}.
This textbook was widely successful and was read by 
Félix Édouard Justin Émile Borel (1871-1956) and 
Henri Léon Lebesgue (1875-1941), who 
would in the following years
 extend Jordan's ideas of measure to the one we know today. 

Borel completed his doctoral dissertation in 1894 at the 
{\it \'Ecole Normale Sup\'erieure} in 
Paris. His dissertation was on the theory of complex functions, 
and on the convergence of a certain 
series (we refer the readers to \cite[Subsection~9.5.3]{history_analysis} 
for more details). 
His dissertation brought up questions
about sets on which this series converges, and he treated 
this in a series of lectures
given at the {\it \'Ecole Normale} in the academic year of 1896/1897. 
After a successful 
reception, these results were published in 1898, where the notion 
of what is known now as `Borel sets' 
first appeared in the following form: 
\begin{quotation}
    ``If a set consists of a countably infinite totality of disjoint intervals
of entire length $s$, then we say this set has measure $s$. If
two disjoint sets have the measures $s$ and $s'$, then their union
has measure $s + s'$ \dots
More generally: If there is a countably infinite number of
disjoint sets with measures $s_1, s_2, \cdots, s_n, \cdots,$ then their union
has measure $s_1+ s_2 + \cdots + s_n+ \cdots$ \dots  
All this follows from the definition of measure. Now here are
some new definitions: If a set $E$ of measure $s$ contains all points
of a set $E'$ of measure $s'$, then the set $E - E'$ has the measure
$s - s'$. \dots
Those sets are called measurable, to which a measure can be
assigned with the aid of the above definitions. \dots"(Borel, 1898.)
\end{quotation}

From this definition, Borel noted four important properties of his measure.
\begin{itemize}
    \item The measure is countably additive.
    \item The measure of a difference $m(E \setminus E')$ (assuming $E \subset E'$) 
        is the difference of the measures $m(E)-m(E')$.
    \item The measure is always nonnegative.
    \item A set of positive measure is uncountable. 
\end{itemize}

A student at the {\it \'Ecole Normale} during the time of Borel's lecture series 
was Henri Lebesgue.
Lebesgue studied at the {\it \'Ecole Normale} from 1894 to 1897, 
and it is likely that 
he attended the lectures given by Borel. 
Lebesgue completed his doctoral dissertation in 1902, where 
he developed the Lebesgue theory of measure as we know it today.  
In the introduction of his dissertation
he made it clear that he attempted to address the known 
shortcomings of the Riemann integral: 
\begin{quotation}
    ``It is known that there are derivatives which are not integrable,
if one accepts Riemann's definition of the integral; the kind of
integration as defined by Riemann does not allow in all cases to
solve the fundamental problem of calculus:\\
Find a function with a given derivative.

It thus seems to be natural to search for a definition of the integral
which makes integration the inverse operation of differentiation
in as large a range as possible.'' (Lebesgue, 1902.)
\end{quotation}
It is worth noting that Lebesgue was the first mathematician to criticise
Riemann's definition (\cite[Subsection~9.3.1, p. 272]{history_analysis}). Although 
there were many publications exploring the weak points of the Riemann integral, 
these publications were not intended nor interpreted as criticisms of the Riemann 
integral and Riemann's ideas were still viewed as sufficiently general. 

In writing his dissertation, Lebesgue, like Jordan from 
whom Lebesgue took inspiration,
first considered the problem 
of defining measures of sets. Referring to Borel, he required his measure 
$m$ to be countably additive. To be explicit, if $\set{E_i}{i\in I}$ is a collection
of at most countably many disjoint measurable subsets of $\mathbb{R}$, then 
\[
    m \left( \bigcup_{i\in I} E_i \right)=\sum_{i \in I} m(E_i).  
\] 
Furthermore, if $\set{I_j}{j\in J}$ is an at most countably many 
set of intervals such that
$E \subset \bigcup_{j \in I}$, then one would want the following to hold:
\[
    m(E)\leq m\left( \bigcup_{j\in J} I_j \right) \leq \sum_{j \in J} l(I_j), 
\]
with $l(I_j)$ denoting the length of the interval $I_j$.
Consequently, Lebesgue defined the outer measure $m_e(E)$ as 
\[
    m_e(E):= \inf \set{\sum_{j \in J} l(I_j)}{E \subset \bigcup_{j \in J} I_j}.  
\]
It was also known at the time that if $E\subset [a,b]$ was
 measurable in the sense of Jordan (recall 
that Jordan's ideas of measurability could also be 
applied to one dimensional sets), then 
$\mathcal{I}(E)+\mathcal{I}([a,b] \setminus E)= b-a. $ 
This motivated Lebesgue in defining the inner measure 
of a set $E \subset [a,b]$ to be 
\[
    m_i(E):= b-a - m_e([a,b] \setminus E).  
\]
Note that $m_i(E)=m_e(E)$ if and only if $m_e(E)+m_e([a,b] \setminus E)=b-a$, 
and so Lebesgue defined the 
set $E$ to be measurable precisely when this condition holds. 
From this definition, Lebesgue noted,
as the Jordan inner and outer contents were essentially 
coverings by a finite number of intervals, that
\[
    \underline{\mathcal{I}}(E)\leq m_i(E)\leq m_e(E) \leq \overline{\mathcal{I}}(E)  
\]
and concluded that all Jordan measurable sets were also measurable by his new measure. 
Using this more powerful tool for measuring sets, he went further than Borel and applied this generalised
idea to develop a stronger theory of integration. For a function $f$ defined on $[a,b]$ 
with range in $[m,M]$, Lebesgue partitioned 
the range $[m,M]$ instead of partitioning the domain as Riemann had done.
Let $P=\lbrace a_0, a_1, \cdots, a_n \rbrace$ be a partition 
\[
    m=a_0<a_1<\cdots < a_n=M  
\]
of $[m,M]$. Define the sums 
\[
S(P)= \sum_{k=1}^n a_{k} m(f^{-1}([a_{k-1},a_{k}])), 
\]
and 
\[
    s(P)=   \sum_{k=1}^n a_{k-1} m(f^{-1}([a_{k-1},a_{k}])).
\]
Note that if $\norm{P}$ denotes the maximum of 
$a_i-a_{i-1}$ for $i=1,2,\cdots, n$ in the partition $P$ then
\[
    0\leq S(P)-s(P)= \sum_{k=1}^n (a_{k}-a_{k-1}) m(f^{-1}[a_{k-1},a_{k}])  
        \leq \sum_{k=1}^n \norm{P}m(f^{-1}[a_{k-1},a_{k}]) \leq \norm{P} (b-a).
\]
Hence, as the partition is refined and $\norm{P} \rightarrow 0$, 
the sums $S(P)$ and $s(P)$ both converge to the same value.
Lebesgue defined the integral to be the value to which these sums converge. 
The only condition 
required for a function $f$ to be (Lebesgue) integrable is that 
$$f^{-1}([u,v])=\set{x}{u\leq f(x) \leq v}$$ 
needs to be a measurable set for each $u,v \in \mathbb{R}$,
which Lebesgue showed to be a weak requirement. 
He therefore showed that this integral is 
an extension of Riemann's. Furthermore, 
as the concept of measure is easily generalised to higher 
dimensions, his definition of the integral 
also easily carries over to the multi-variable case. 
In his dissertation he proved the following theorems\footnote{
    Once again we are paraphrasing the statement of the original results.}: 
\begin{theorem}
    If a function $f$ on $[a,b]$ has bounded 
    derivative $f'$, then $f'$ is (Lebesgue) integrable
    and $\int_a^b f'(x) \, dx = f(b)-f(a)$. 
\end{theorem}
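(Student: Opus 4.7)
The plan is to reduce the identity to an interchange of limit and integral for the sequence of difference quotients, which is precisely the kind of operation Lebesgue's integral was designed to handle. First, since $f$ is differentiable (hence continuous) on $[a,b]$, extend $f$ to $[a,b+1]$ by the affine rule $f(x) := f(b) + f'(b)(x-b)$ for $x>b$, preserving both continuity and the uniform bound on the derivative. Set
\[
g_n(x) := n\bigl(f(x+1/n) - f(x)\bigr), \qquad x \in [a,b].
\]
Each $g_n$ is continuous (hence Lebesgue measurable), and $g_n(x) \to f'(x)$ pointwise on $[a,b]$ by the definition of derivative, so $f'$ is measurable as a pointwise limit of measurable functions. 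By the Mean Value Theorem, $g_n(x) = f'(\xi_{n,x})$ for some $\xi_{n,x} \in (x, x+1/n)$, so $|g_n(x)| \le M$ whenever $|f'| \le M$. Since $f'$ is measurable and bounded on the finite-measure set $[a,b]$, it is Lebesgue integrable.

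Next, I would pass to the limit in $\int_a^b g_n\,dx$ from two directions. On one side, the bounded convergence theorem, applicable because $|g_n| \le M$, $g_n \to f'$ pointwise, and $[a,b]$ has finite measure, yields $\int_a^b g_n\,dx \to \int_a^b f'\,dx$. On the other side, each $g_n$ is continuous, hence Riemann integrable, so by linearity and a translation of variable,
\[
\int_a^b g_n(x)\,dx = n\int_b^{b+1/n} f(y)\,dy - n\int_a^{a+1/n} f(y)\,dy.
\]
By continuity of $f$ at $a$ and $b$, these averages over shrinking intervals converge to $f(b)$ and $f(a)$ respectively, so $\int_a^b g_n\,dx \to f(b)-f(a)$. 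Equating the two limits yields the identity.

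The hard part is the interchange of limit and integral. For the Riemann integral this is exactly where one gets stuck, as illustrated by Volterra's example of a differentiable function whose bounded derivative fails to be Riemann integrable, and indeed this theorem is the primary payoff of Lebesgue's framework over Riemann's: measurability of the pointwise limit is automatic, and the bounded convergence theorem imposes no Riemann integrability requirement on the limit. A minor technicality is the need to give meaning to $f(x+1/n)$ for $x$ near $b$, which is handled cleanly by the linear extension introduced above.
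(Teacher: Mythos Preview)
Your argument is correct and is in fact the classical proof due to Lebesgue himself: difference quotients, a uniform bound via the Mean Value Theorem, the bounded convergence theorem, and the telescoping computation of $\int_a^b g_n$ via translation. The minor technical device of extending $f$ linearly past $b$ is handled properly.

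The paper, however, does not supply a proof of this theorem at all. It appears in the historical background section as one of two results Lebesgue established in his 1902 dissertation, cited precisely to illustrate how Lebesgue's new integral repaired the defects of Riemann's (in particular the Volterra phenomenon you mention). So there is no paper proof to compare against; your write-up simply fills in what the paper leaves as a historical citation, and it does so by the standard route.
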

\begin{theorem}
    If a sequence of functions $(f_n)$ whose absolute 
    value is uniformly bounded from above has limit $f$, 
    then the integral of $f$ is the limit of the integrals of the functions $f_n$. 
\end{theorem}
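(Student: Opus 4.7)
The plan is to deduce the statement from an Egorov-type theorem: on a set of finite measure (here a bounded interval $[a,b]$), pointwise convergence of measurable functions can be upgraded to uniform convergence outside a set of arbitrarily small measure. Once this is available, the bounded convergence statement follows by splitting the integral into two pieces and using the uniform bound on the ``bad'' set. Throughout, denote by $M$ a common bound with $|f_n(x)| \leq M$ for every $n$ and $x$; passing to the limit gives $|f(x)| \leq M$ as well, and $f$ is measurable as a pointwise limit of measurable functions (the sets $\{x : f(x) > \alpha\}$ can be expressed as countable unions and intersections of the sets $\{x : f_n(x) > \alpha + 1/k\}$, which are measurable by hypothesis).

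The key intermediate step is the following lemma: for every $\varepsilon > 0$ there exists a measurable $A_\varepsilon \subset [a,b]$ with $m([a,b] \setminus A_\varepsilon) < \varepsilon$ such that $f_n \to f$ uniformly on $A_\varepsilon$. To prove it I would set
\[
E_N^{k} := \bigcap_{n \geq N} \left\{ x \in [a,b] : |f_n(x) - f(x)| < \tfrac{1}{k} \right\},
\]
observe that $E_N^k \subset E_{N+1}^k$ and $\bigcup_N E_N^k = [a,b]$ by pointwise convergence, invoke countable additivity (monotone continuity) of Lebesgue measure to pick $N_k$ with $m([a,b] \setminus E_{N_k}^k) < \varepsilon / 2^k$, and take $A_\varepsilon := \bigcap_k E_{N_k}^k$, so that $m([a,b] \setminus A_\varepsilon) < \varepsilon$ and $|f_n - f| < 1/k$ on $A_\varepsilon$ for all $n \geq N_k$.

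With the lemma in hand, fix $\eta > 0$, set $\varepsilon := \eta/(4M+1)$, produce $A_\varepsilon$, and choose $N$ large enough that $|f_n(x) - f(x)| < \eta/(2(b-a))$ for all $n \geq N$ and $x \in A_\varepsilon$. Then for such $n$,
\[
\left| \int_a^b (f_n - f) \, dx \right| \leq \int_{A_\varepsilon} |f_n - f| \, dx + \int_{[a,b] \setminus A_\varepsilon} |f_n - f| \, dx \leq \frac{\eta}{2(b-a)}(b-a) + 2M \cdot \varepsilon < \eta,
\]
which gives $\int_a^b f_n \, dx \to \int_a^b f \, dx$.

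The main obstacle is the Egorov-type lemma: its proof depends essentially on the countable additivity of Lebesgue measure, which is precisely the feature that distinguishes Lebesgue's framework from Jordan's and which the preceding discussion has emphasised. Without countable additivity, the sets $E_N^k$ need not exhaust $[a,b]$ in measure, and the splitting argument collapses. This is also the point where the hypothesis that the domain has finite measure is used — the argument breaks down on $\mathbb{R}$ because an increasing union of sets of finite measure need not have bounded measure.
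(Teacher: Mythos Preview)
The paper does not prove this statement. It appears in the historical background section (Section~\ref{Section:history}) as a paraphrase of one of the theorems from Lebesgue's 1902 dissertation, and no proof is supplied there. Your Egorov-based argument is a correct and standard route to the bounded convergence theorem on a set of finite measure.

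For what it is worth, the paper \emph{does} prove the analogous result in the Daniell setting (Theorem~\ref{thm:dom}), but by a different mechanism: it first establishes the Monotone Convergence Theorem (Theorem~\ref{thm:Monotone}), derives Fatou's Lemma from it (Theorem~\ref{thm:Fatou}), and then obtains dominated convergence by applying Fatou to $z+x_n$ and to $z-x_n$. That approach never invokes an Egorov-type lemma and works in the abstract Daniell framework, where no measure is available until after the integral has been built. Your argument is tied to the Lebesgue-measure setting---as you correctly flag, it leans on the countable additivity of $m$ and the finiteness of $m([a,b])$---but in exchange it is self-contained and does not need monotone convergence as an input.
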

Recall that the big problems with Riemann's integral were:
\begin{enumerate}[(i)]
    \item the definition does not extend to higher dimensions,
    \item  there are differentiable functions with bounded derivative whose derivative is not Riemann integrable,
    \item and the limit of a sequence of Riemann integrable functions may not be Riemann integrable.
\end{enumerate}
With these theorems Lebesgue, addressed these weakpoints of Riemann's integral.  

\subsection{The context of Daniell's papers}

In the period following Lebesgue's dissertation, his work 
was developed by himself and many other mathematicians. 
Pierre Joseph Louis Fatou (1878-1929)
proved the Parseval's equality for 
the Fourier coefficients for general measurable functions in 1906. 
Guido Fubini (1879-1943) 
published his results pertaining to iterated integration in 1907. 
Most important 
of all, Frigyes Riesz (1880-1956) solidified Lebesgue's 
theory in the heart of modern analysis 
by proving the Riesz-Fischer representation theorem in 1907 and introducing the 
$L^p$-spaces in 1910. 

During this time, the notion of measure on the real line started becoming more 
abstract; the underlying properties required of the sets and the set functions in the theory of 
integration were starting to become clear. Furthermore, mathematicians in France and Italy became interested 
in what we today would call `functionals' (\cite[Epilogue, p. 182]{Hawkins}). 
The Stieltjies integral was known at the time and Riesz proved in 1909 that 
it could be extended to functions of bounded variation. Riesz 
showed that if $U$ is a linear functional such that 
$U(f_n) \rightarrow U(f)$ whenever $f_n$ tends to $f$ uniformly, then 
one can find a function $g$ of bounded variation such that $U$ is equal to the 
Stieltjies integral
\[
    U(f)= \int_a^b f(x) \, dg(x).
\]
Beyond this, in 1910 Lebesgue published a paper where he extended his ideas
to multiple dimensions. He made the important observation that integrals may 
just as well be considered as additive set functions. 
If $E$ is a measurable set in $n$-dimensional space, and $f$ is a fixed measurable
function, one can define a set function 
\[
    F(E):=\int_E f \,dA.  
\]
Lebesgue made the following important observations:
If $(E_n)$ is a sequence of subsets, then 
\begin{itemize}
    \item if $m(E_n)\rightarrow 0$, then $F(E_n)\rightarrow 0$; and
    \item if the sets $E_n$ are pairwise disjoint, then 
    $F(\bigcup_{n=1}^\infty E_n)=\sum_{n=1}^\infty F(E_n)$.
\end{itemize}

This observation was the key for the Austrian 
mathematician Johann Karl August Radon (1887-1956) in 1913 to 
merge the Stieltjies integral with the ideas of Lebesgue. He abstracted 
the requirements needed in Lebesgue's theory, only requiring an additive 
set function, and by doing so generalised the Lebesgue theory even further. 

The development of the idea of `functionals' and 
the movement towards abstraction during this period motivated 
Percy John Daniell (1889-1946) towards a more general 
form of integral, first introduced in 1918. 
Daniell explicitly makes 
reference to the publication of Radon in 1913 and his 
generalisation of the Stieltjies integral. 
He states in his introductory paragraph that his work is an 
attempt to abstract the notion of integration even further,
 to functions of elements of arbitrary nature. 
\begin{quotation}
    ``In this paper a theory is developed which is independent of the nature 
    of the elements. They may be points in space of a denumerable number
    of dimensions or curves in general or classes of events so far as the 
    theory is concerned.'' (Daniell, 1918.)
\end{quotation}
In his series of papers, Daniell developed a notion of integration that did not 
rely on a measure theory of the underlying sets, and only used properties of 
functionals and spaces of functions. 
From this he derived many of the important integration results, 
including 
all the standard integration theorems due to Lebesgue, and even an analogue of 
the Radon-Nikodym theorem \cite[Theorem~3.4]{daniell1924deriative}. It is worth noting that Daniell, in his paper
\cite{daniell_infinite}, was the first to produce examples of sufficiently general integrals of functions 
on a denumerably 
infinite number of dimensions. This development by Daniell was later rediscovered by 
the Soviet mathematician Andrey Kolmogorov (1903-1987) in the context of probability theory and 
stochastic processes
and is now known as the Kolmogorov extension theorem which was proved in 1933. 

The application of Daniell’s integral to stochastic processes was noted long before Kolmogorov.
Norbert Wiener (1894-1964) in \cite{wiener_mean} and 
\cite{wiener_differential}, used Daniell’s theory as the framework on which 
he developed the rigorous treatment 
of Brownian Motion. We discuss this in detail in Section \ref{Section:Wiener}.

In a more modern setting, the Daniell integral 
finds applications in the theory of measure on compact 
topological groups. Vladimir Bogachev, in his 
book on measure theory \cite[p. 445]{bogachev2007measure}, states
\begin{quotation}
    ``Daniell’s construction turned out to be very
    efficient in the theory of integration on locally compact spaces. It enabled
    one to construct the integral without prior constructing measures, which is
    convenient when the corresponding measures are not $\sigma$-finite. This was manifested
    especially by the theory of Haar measures. In that case, it turned
    out to be preferable to regard measures as functionals on spaces of continuous
    functions.'' (Bogachev, 2007.)
\end{quotation}

In the years following Daniell's papers there 
was a widespread point of view that the 
Daniell integral is the more appropriate way of 
teaching the subject of integration \cite[p. 446]{bogachev2007measure}. 
This is due to the fact that using Daniell's method, the 
standard theorems quickly arises. 
Teaching in this way was thus perceived to be more economical. 
However, in applications,
measures turn out to be the principle object of study, 
and thus in a Daniell-based treatment of the subject
one needs to prove these theorems in any case. 
It is therefore that today the pedagogical 
treatment of the Daniell integral 
is less important than that of measure. 

Today the Daniell integral serves as a useful tool 
when considering measures as functionals on spaces 
of continuous functions. In the words of Bogachev in \cite[p. 446]{bogachev2007measure}:
\begin{quotation}
    ``Certainly, for the researchers in measure theory and functional
analysis, acquaintance with Daniell’s method is necessary for broadening the
technical arsenal." (Bogachev, 2007.)
\end{quotation}

\section{Daniell's Integral and Extension} \label{section:extension}  

In this section, we review the extension procedure as 
described in the first of Daniell's papers on discussing his theory of integration \cite{daniell_1}.
Let $\overline{\mathbb{R}}:= \mathbb{R}\cup \lbrace -\infty, \infty \rbrace$ 
be the extended real numbers. 
Let ${\overline{\mathbb{R}}}^X$ be given by 
\[
    {\overline{\mathbb{R}}}^X:= \set{x}{x \text{ is a function from } 
            X \text{ to } \overline{\mathbb{R}}}.
\]

For $x,y \in {\overline{\mathbb{R}}}^X$ we say $x \leq y$ if $x(t) \leq y(t)$ 
for all $t \in X$. 
Thus $({\overline{\mathbb{R}}}^X, \leq )$ forms a partially ordered set. In fact, it 
forms a lattice with the meet and join given as follows:  
for  $x,y \in {\overline{\mathbb{R}}}^X$ 
$$ 
   (x\land y)(t)= \min \lbrace x(t), y(t) \rbrace, \text{ for } t \in X, 
$$
and
\[
  (x\lor y)(t)= \max \lbrace x(t), y(t) \rbrace, \text{ for } t \in X.
\]
\begin{remark}
    In the source material, meet $\land$ and join $\lor$ is 
    referred to as {\it logical addition} and 
    {\it logical product} respectively. We keep the 
    terminology meet and join throughout this 
    document.
\end{remark}

Let $T_0 \subset {\overline{\mathbb{R}}}^X$ be a class of functions 
that is closed under $\land$, $\lor$, and linear combinations. 
More explicitly, if $x,y \in T_0$ and $\alpha,\beta \in \mathbb{R}$, 
then $\alpha x + \beta y$, $x \lor y$, and 
$x \wedge y$ are also in $T_0$.\\

\begin{remark}\label{remark:ambig}
    If it were the case that all the functions are 
finite-valued, this class would be called a {\it vector lattice}. Care is taken to 
allow functions which possibly take on infinite values. In the 
book {\it Real Analysis} by H. Royden \cite{royden1988real} such a set 
is also called a vector lattice. However, we use the terminology {\it extended vector lattice} to make 
explicit note of the difference.
We thus need clarification in the ambiguous case when the 
addition is of the form ``$\infty - \infty$''. 
Let $x,y$ be functions in $T_0$. Let $P_{x+y}$ be the set 
where addition is ambiguous. More explicitly, 
\[
    P_{x+y}:= \lbrace t \in X: x(t)= \infty \text{ and } y(t)=-\infty  \text{, or } x(t)= -\infty \text{ and } y(t)=\infty\rbrace. 
\]
For any $c \in \overline{\mathbb{R}}$, we require the following to be in $T_0$: 
\[
    f(t):=  \begin{cases}
        x(t)+y(t), &\text{ if } t \not \in P_{x+y}\\
        c, &\text{ if }t \in P_{x+y}\\
    \end{cases} .
\]
Furthermore, for $x,y \in T_0$ we define $x+y$ as follows: For $t\in X$ 
\[
    (x+y)(t):=  \begin{cases}
        x(t)+y(t), &\text{ if } t \not \in P_{x+y}\\
        0, &\text{ if } t \in P_{x+y}\\
    \end{cases}.
\]
\end{remark}

If $(x_n)$ is a sequence in $\left( {\overline{\mathbb{R}}}^X, \leq \right)$ 
and $x \in {\overline{\mathbb{R}}}^X$, then we 
use the notation $ (x_n)\searrow x$ (respectively $(x_n)\nearrow x$) 
to mean a pointwise decreasing (respectively increasing)
sequence with a pointwise limit $x$. Similarly, if $(\alpha_n)$ 
is a sequence in $\mathbb{R}$ and $\alpha \in \mathbb{R}$, we 
use the notation $ (\alpha_n)\searrow \alpha$ (respectively $(\alpha_n)\nearrow \alpha$) 
to mean a decreasing (respectively increasing)
sequence with limit $\alpha$.\\

If $(x_n)$ is a sequence in $\left({\overline{\mathbb{R}}}^X, \leq \right)$, 
 then $\bigvee \set{x_n}{n \in \mathbb{N}}$ and 
$\bigwedge \set{x_n}{n \in \mathbb{N}}$ always exists. We define 
\[
    \sup_n x_n:= \bigvee \set{x_n}{n \in \mathbb{N}}; 
\] 
and
\[
    \inf_n x_n:= \bigwedge \set{x_n}{n \in \mathbb{N}}.    
\]
Note that the supremum and infimum is given by the following:
\[
    \left(\sup_n x_n \right)(t)=\sup_n x_n(t) \text{ for } t \in X;      
\]
and 
\[
    \left(\inf_n x_n\right)(t)=\inf_n x_n(t) \text{ for } t \in X.  
\]

\begin{definition}\label{def:I_Integral}
    Let $I:T_0 \rightarrow \mathbb{R}$. 
    We call $I$ an {\it $I$-integral} if for all $x,y \in T_0$, $\alpha, \beta \in \mathbb{R}$, and sequences $(x_n) \subset T_0$:
    
    \begin{enumerate}[(D1)]
        \item $I(\alpha x + \beta y)= \alpha I(x)+ \beta I(y)$;
        \item $(x_n) \searrow 0$ implies $\lim_{n \rightarrow \infty} I(x_n)=0$; and
        \item if $0\leq x$ then $0\leq I(x)$.\\
    \end{enumerate}
    
\end{definition}
 

\subsection{Extending the $I$-integral}
Throughout this section, let $T_0 \subset {\overline{\mathbb{R}}}^X$ be
an extended vector lattice and 
$I: T_0 \rightarrow \mathbb{R}$ be an $I$-integral.
\begin{definition} \label{definition:Tinc}
    Define $T_1 \subset {\overline{\mathbb{R}}}^X $ as follows: 
    \[
        T_1:= \lbrace x \in {\overline{\mathbb{R}}}^X: \text{ there exists a sequence } (x_n) \subset T_0 \text{ such that }
        (x_n) \nearrow x\rbrace.
    \]
\end{definition}

We aim to show that:
\begin{enumerate}[({A}1)]
    \item $I$ extends to $T_1$; and 
    \item if $(x_n)$ is a sequence in $T_1$ such that $(x_n) \nearrow x$, then $x\in T_1$. \\
\end{enumerate}

Note that $T_1$ is not an extended vector lattice since $T_1$ 
is only closed under nonnegative scalar multiplication. 
However, $T_1$ is closed under addition, $\land$, and $\lor$. 
For if $x$ and $y$ are in $T_1$ with
$(x_n)$ and $(y_n)$ sequences in $T_0$ such that 
$(x_n)\nearrow x$ and $(y_n)\nearrow y$, 
then $(x_n +y_n)\nearrow x+y, (x_n \lor y_n)\nearrow x \lor y,\text{ and } 
(x_n \wedge y_n)\nearrow x \wedge y .$\\

\begin{lemma}\label{lemma:Dprime}
    Let $x \in T_1$ and $(x_n)$ be a sequence in $T_0$ 
    such that $(x_n) \nearrow x$. If $h \in T_0$ 
    such that $h \leq x$, then $I(h)\leq \lim_{n \rightarrow \infty} I(x_n) $ 
    (where $\lim_{n \rightarrow \infty} I(x_n)$ might possibly be infinite).
\end{lemma}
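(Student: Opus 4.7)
The plan is to reduce the statement to an application of the key convergence axiom (D2) by comparing $h$ with the truncations $h \wedge x_n$, which lie below both $h$ and $x_n$ and so allow us to compare $I(h)$ with $I(x_n)$ via monotonicity.

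First I would record a monotonicity principle for $I$ on $T_0$: if $f,g \in T_0$ with $f \leq g$, then the difference $g-f$ is a nonnegative element of $T_0$ (using that $T_0$ is closed under linear combinations, and handling the ``$\infty-\infty$'' set $P_{f-g}$ via the convention in Remark~\ref{remark:ambig}; on that set we have $f=g=\pm\infty$, so defining the difference to be $0$ is consistent with $f \leq g$). Then (D3) gives $I(g-f) \geq 0$ and (D1) gives $I(g) \geq I(f)$.

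Next, set $y_n := h \wedge x_n$. Since $T_0$ is closed under $\wedge$, each $y_n$ lies in $T_0$, and $y_n \leq h$. Because $x_n \nearrow x$ pointwise and $h \leq x$, for each $t \in X$ we have $y_n(t) = \min\{h(t), x_n(t)\} \nearrow \min\{h(t), x(t)\} = h(t)$. Therefore the sequence $h - y_n$ lies in $T_0$ and satisfies $(h-y_n) \searrow 0$ pointwise. Here again I would verify that the pointwise difference is unambiguous: where $h(t) = \infty$ we must have $x(t) = \infty$ too and $x_n(t) \nearrow \infty$, so $y_n(t) = x_n(t)$ is eventually finite whenever $x_n(t)$ is, and on the exceptional set where both $h$ and $y_n$ are $\infty$ the convention of Remark~\ref{remark:ambig} still yields the value $0$ in the limit.

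Now I apply (D2) to $(h-y_n) \searrow 0$ to conclude $I(h-y_n) \to 0$, which by (D1) gives $I(y_n) \to I(h)$. Combined with the monotonicity $y_n \leq x_n$, which yields $I(y_n) \leq I(x_n)$, passing to the limit gives
\[
I(h) = \lim_{n\to\infty} I(y_n) \leq \lim_{n\to\infty} I(x_n),
\]
noting that the right-hand limit exists in $\overline{\mathbb{R}}$ because $(I(x_n))$ is monotone increasing by the same monotonicity principle applied to $x_n \leq x_{n+1}$.

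The main obstacle I anticipate is bookkeeping around the extended-real-valued entries: the axioms of an $I$-integral are stated with values in $\mathbb{R}$, but elements of $T_0$ may be $\pm\infty$ somewhere, so I must be sure that $h - y_n$ really belongs to $T_0$ under the convention of Remark~\ref{remark:ambig} and that its pointwise decrease to $0$ is not spoiled on the ambiguous set. Once this is verified, the proof is a one-line application of (D1), (D2), and the derived monotonicity from (D3).
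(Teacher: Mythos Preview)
Your proposal is correct and is essentially the same argument as the paper's: both define the truncations $h\wedge x_n$, use $(h - h\wedge x_n)\searrow 0$ together with (D2) to get $I(h\wedge x_n)\to I(h)$, and then compare with $I(x_n)$ via the monotonicity derived from (D3). Your extra care about the extended-real ambiguities is more explicit than the paper's, but the underlying strategy is identical.
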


\begin{proof}
    Let $x \in T_1$ and $(x_n)$ be a sequence in $T_0$ such that $(x_n) \nearrow x$. Let $h \in T_0$ 
    such that $h \leq x$.
    For $n \in \mathbb{N}$, define $h_n:=x_n \wedge h$. Then for 
    each $n\in \mathbb{N}$,
     $h_n \in T_0$ and $h_n \leq x_n$. Also note that $(h_n)$ is an increasing sequence.
     We then have 
     $$\sup_n h_n=\sup_n (x_n \wedge h) = \left(\sup_n x_n\right) \wedge h=x\wedge h=h,$$ 
    and thus $(h_n) \nearrow h$. 
    Therefore, $(h-h_n)\searrow 0$ and $\lim_{n \rightarrow \infty} I(h-h_n)=0$ 
    by (D2) of Definition~\ref{def:I_Integral}. But 
    \[
    0=\lim_{n \rightarrow \infty} I(h-h_n)= \inf_n [I(h)-I(h_n)] =I(h)- \sup_n I(h_n),
    \]
    and so $I(h)=\sup_n I(h_n)$. 
    For all $n \in \mathbb{N}$ we have that $h_n \leq x_n$ 
    and thus by (D3) of Definition~\ref{def:I_Integral} that
    $I(h_n) \leq I(x_n)$. Hence  
    $$I(h)=\sup_n I(h_n) \leq \sup_n I(x_n)=\lim_{n \rightarrow \infty} I(x_n),$$ 
    proving the result.
\end{proof}

\begin{lemma} 
    \label{lemma:limit}
    Let $x,y \in T_1$ with $(x_n)$ and $(y_n)$ sequences in $T_0$ 
    such that $(x_n) \nearrow x$ and $(y_n) \nearrow y$. If $y \leq x$ then 
    \[
    \lim_{n \rightarrow \infty} I(y_n) \leq \lim_{n \rightarrow \infty} I(x_n).
    \]
\end{lemma}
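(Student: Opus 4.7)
The plan is to reduce this to Lemma~\ref{lemma:Dprime} by treating each term of the sequence $(y_n)$ as a test function $h \in T_0$ sitting below $x$, and then passing to the limit in $n$.

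First I would fix an arbitrary $m \in \mathbb{N}$ and observe that $y_m \in T_0$ since $(y_n) \subset T_0$. Because $(y_n) \nearrow y$ is a pointwise increasing sequence with limit $y$, we have $y_m \leq y$, and by hypothesis $y \leq x$, so $y_m \leq x$. Thus $y_m$ plays exactly the role of the function $h$ in Lemma~\ref{lemma:Dprime} with respect to $x$ and the approximating sequence $(x_n)$.

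Applying Lemma~\ref{lemma:Dprime} then yields $I(y_m) \leq \lim_{n \to \infty} I(x_n)$. Crucially, the right-hand side is a fixed quantity in $\overline{\mathbb{R}}$ that does not depend on $m$. Since this inequality holds for every $m \in \mathbb{N}$, I would take the supremum (equivalently, the increasing limit) over $m$ on the left. Note that the sequence $(I(y_n))$ is increasing: indeed, $(y_n)$ is pointwise increasing, so $y_{n+1} - y_n \geq 0$ in $T_0$, and by property (D3) together with linearity (D1) we obtain $I(y_{n+1}) \geq I(y_n)$. Hence $\sup_m I(y_m) = \lim_{m \to \infty} I(y_m)$, and we conclude
\[
\lim_{m \to \infty} I(y_m) \leq \lim_{n \to \infty} I(x_n).
\]

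There is no real obstacle here beyond being careful that Lemma~\ref{lemma:Dprime} permits the right-hand limit to be infinite, so the inequality is meaningful in $\overline{\mathbb{R}}$ even when $\lim_n I(x_n) = +\infty$; the argument still goes through without modification. A side benefit of this proof is that it shows $\lim_n I(x_n)$ is independent of the approximating sequence $(x_n)$: applying the result with $y = x$ and comparing two approximating sequences in both directions shows their limits coincide, which is precisely what is needed to unambiguously extend $I$ from $T_0$ to $T_1$.
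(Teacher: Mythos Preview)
Your proof is correct and follows essentially the same approach as the paper: fix $m$, note $y_m \leq y \leq x$, apply Lemma~\ref{lemma:Dprime} to get $I(y_m) \leq \lim_n I(x_n)$, then take the supremum over $m$. The paper's version is more terse, but your added justification that $(I(y_n))$ is increasing via (D1) and (D3), and your remark on the well-definedness corollary, are both accurate elaborations of the same argument.
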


\begin{proof}
    For each $m \in \mathbb{N}$ we have $y_m \in T_0$ and $y_m \leq y \leq x$. By Lemma \ref{lemma:Dprime}, 
    we know that $I(y_m) \leq \lim_{n \rightarrow \infty}  I(x_n)$. Thus 
    $$\lim_{m \rightarrow \infty} I(y_m) =\sup_m I(y_m) \leq \lim_{n \rightarrow \infty} I(x_n),$$
    proving the result.
\end{proof}

\begin{corollary} \label{cor:well_def}
    If $(x_n)$ and $(y_n)$ are both 
sequences in $T_0$ that increase to the same vector $x \in T_1$, 
then $\lim_{n \rightarrow \infty} I(x_n)=\lim_{n \rightarrow \infty} I(y_n)$.
\end{corollary}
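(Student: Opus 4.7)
The plan is to observe that this corollary is an immediate symmetry consequence of Lemma~\ref{lemma:limit}. Indeed, Lemma~\ref{lemma:limit} compares the limits $\lim I(x_n)$ and $\lim I(y_n)$ under the hypothesis that the pointwise limits $x$ and $y$ of the two sequences satisfy $y \leq x$; here we are given the stronger assumption that both sequences increase to the \emph{same} limit, which yields both $y \leq x$ and $x \leq y$.

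More concretely, I would proceed as follows. Suppose $(x_n), (y_n) \subset T_0$ with $(x_n) \nearrow x$ and $(y_n) \nearrow x$. First, apply Lemma~\ref{lemma:limit} with the sequence $(x_n)$ playing the role of the majorant and $(y_n)$ playing the role of the minorant: since the pointwise limit of $(y_n)$ equals the pointwise limit of $(x_n)$, we trivially have $x \leq x$, so the hypothesis $y \leq x$ of the lemma is satisfied with $y = x$. The conclusion gives
\[
\lim_{n \to \infty} I(y_n) \leq \lim_{n \to \infty} I(x_n).
\]
Next, interchange the roles of the two sequences and apply Lemma~\ref{lemma:limit} again to obtain the reverse inequality
\[
\lim_{n \to \infty} I(x_n) \leq \lim_{n \to \infty} I(y_n).
\]
Combining the two inequalities yields the desired equality.

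There is essentially no obstacle here, since Lemma~\ref{lemma:limit} has already done all the work: the corollary is just the statement that the extension of $I$ to $T_1$ via $I(x) := \lim_n I(x_n)$ is well-defined, independent of the choice of approximating sequence. The only thing to double-check is that Lemma~\ref{lemma:limit} applies even when the common limit $x$ may take the value $+\infty$ at some points, but this is covered by the lemma's statement, which permits the limits of $I(x_n)$ and $I(y_n)$ to be infinite.
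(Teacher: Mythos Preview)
Your proposal is correct and matches the paper's intended approach: the paper states this as a corollary of Lemma~\ref{lemma:limit} without proof, and the natural derivation is precisely the symmetric double application of that lemma that you wrote out.
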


\begin{definition} \label{definition:seqi}
    Let $I_1: T_1 \rightarrow \mathbb{R}\cup \lbrace\infty \rbrace$ be defined as follows: 
    if $x \in T_1$ with $(x_n)$ a sequence in $T_0$ such that $(x_n) \nearrow x$, then define
    $I_1(x) := \lim_{n \rightarrow \infty} I(x_n)$.
\end{definition}
Note that if $x \in T_0$, then $x \in T_1$ and $I_1(x)=I(x)$. 
This completes our first goal (A1)
of extending $I$ to $T_1$. 
We proceed to our second goal (A2) with the following lemma.

\begin{lemma} \label{lemma:incr_closed}
    If $x\in {\overline{\mathbb{R}}}^X$ and $(x_n)$ is a sequence in $T_1$ such that $(x_n) \nearrow x$, 
    then $x \in T_1$ and $I_1(x)=\lim_{n \rightarrow \infty} I_1(x_n)$.
\end{lemma}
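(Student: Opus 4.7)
The plan is to construct, via a diagonal argument, a sequence $(y_m) \subset T_0$ with $(y_m) \nearrow x$, and then use the monotonicity lemmas already in hand to identify the limits of the integrals. Since each $x_n \in T_1$, I may select sequences $(x_{n,m})_{m \in \mathbb{N}} \subset T_0$ with $(x_{n,m}) \nearrow x_n$ as $m \to \infty$. I would then define
\[
y_m := \bigvee_{k=1}^m x_{k,m},
\]
which lies in $T_0$ because $T_0$ is closed under finite joins.

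The first thing to verify is that $(y_m)$ is monotone increasing: for $k \leq m$, monotonicity of $(x_{k,m})_m$ gives $x_{k,m+1} \geq x_{k,m}$, so $\bigvee_{k=1}^{m+1} x_{k,m+1} \geq \bigvee_{k=1}^{m} x_{k,m}$, i.e.\ $y_{m+1} \geq y_m$. Next, I would sandwich $y_m$: since $x_{k,m} \leq x_k \leq x_m$ for $k \leq m$, we have $y_m \leq x_m \leq x$; and for each fixed $k$, $y_m \geq x_{k,m}$ for all $m \geq k$, so $\lim_m y_m \geq \lim_m x_{k,m} = x_k$, and letting $k \to \infty$ yields $\lim_m y_m \geq x$. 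Combining these gives $(y_m) \nearrow x$, establishing $x \in T_1$ and, by Definition \ref{definition:seqi}, $I_1(x) = \lim_m I(y_m)$.

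For the limit identity, I would use Lemmas \ref{lemma:Dprime} and \ref{lemma:limit}. The inequality $y_m \leq x_m$ together with Lemma \ref{lemma:Dprime} (applied with $h = y_m \in T_0$ and any sequence in $T_0$ increasing to $x_m$) yields $I(y_m) \leq I_1(x_m)$; passing to the limit gives $I_1(x) \leq \lim_n I_1(x_n)$. Conversely, since $x_n \leq x$ with both in $T_1$, Lemma \ref{lemma:limit} applied to defining sequences in $T_0$ for $x_n$ and $x$ gives $I_1(x_n) \leq I_1(x)$ for every $n$, hence $\lim_n I_1(x_n) \leq I_1(x)$. The two inequalities together close the argument.

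The main obstacle is the diagonal construction itself: the choice $y_m = \bigvee_{k \leq m} x_{k,m}$ must simultaneously land in $T_0$, be monotone, and converge up to $x$. Once this telescoping through the double-indexed family is set up correctly, everything else is a direct invocation of the monotonicity lemmas established just above.
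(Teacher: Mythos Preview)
Your proposal is correct and is essentially the paper's own argument: the paper defines $g_n := \sup\{x_{r,k} : r,k \leq n\}$, which by monotonicity in the second index collapses to your $y_n = \bigvee_{k=1}^n x_{k,n}$, and then proceeds via the same sandwiching $y_m \leq x_m$ and $x_{k,m} \leq y_m$ to conclude both $x \in T_1$ and the integral identity. The only cosmetic difference is that for the inequality $I_1(x_n) \leq I_1(x)$ the paper re-derives it from the diagonal sequence while you invoke Lemma~\ref{lemma:limit} directly, which is slightly cleaner.
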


\begin{proof}
    Let $r \in \mathbb{N}$. We have $x_r \in T_1$. Therefore, there exists 
    a sequence $(x_{r,k})$ in $T_0$ such that $(x_{r,k}) \nearrow x_r$. 
    Then for each $n \in \mathbb{N}$, define 
    $$g_n:=\sup \lbrace x_{r,k}: r,k\leq n \rbrace. $$ 
    Note that, for all $n \in \mathbb{N}$, $g_n \leq x$, and since $g_n$ is a finite join of functions in $T_0$, $g_n \in T_0$. 
    Also note that $(g_n) \subset T_0$ is increasing,
     and $\lim_{n \rightarrow \infty} g_n \in T_1$ by definition of $T_1$. 
    We also have $g_n \leq x_n$ for all $n \in \mathbb{N}$, and thus the following limits hold pointwise:
    \begin{align}
        \lim_{n \rightarrow \infty} g_n \leq \lim_{n \rightarrow \infty} x_n=x. \label{eqn:incr_closed_1}
    \end{align}
    If $r,n\in \mathbb{N}$ such that $r \leq n$, then we have that $x_{r,n}\leq g_n$ 
    and so the following limits hold pointwise:
    \begin{align}
        x_r=\lim_{n \rightarrow \infty} x_{r,n}\leq \lim_{n \rightarrow \infty} g_n. \label{eqn:incr_closed_2}
    \end{align}
    This holds for all $r \in \mathbb{N}$ and so, combining \eqref{eqn:incr_closed_1} and \eqref{eqn:incr_closed_2}, we have 
    \[
        x= \lim_{r \rightarrow \infty} x_r \leq   \lim_{n \rightarrow \infty} g_n\leq \lim_{n \rightarrow \infty} x_n=x,
    \] and the inequalities becomes equalities. Therefore $(g_n)$ is a sequence in $T_0$ increasing to $x$, hence $x \in T_1$. 
    Since $g_n\leq x_n$ for all $n \in \mathbb{N}$, Lemma \ref{lemma:limit} implies 
    that $I(g_n)=I_1(g_n) \leq I_1(x_n)$ and so 
    \begin{align}
        I_1(x)=\lim_{n \rightarrow \infty} I(g_n) \leq \lim_{n \rightarrow \infty} I_1(x_n). \label{eqn:incr_closed_3}
    \end{align}
    If we fix $r \in \mathbb{N}$, then $x_{r,n}\leq g_n $ for all $n \geq r$ and it follows
    that $$I_1(x_r)=\lim_{n \rightarrow \infty} I(x_{r,n})\leq \lim_{n \rightarrow \infty} I(g_n) =I_1(x).$$
    Thus $\lim_{r \rightarrow \infty} I_1(x_r)\leq I_1(x)$. This, together with \eqref{eqn:incr_closed_3},
    allows us to conclude that \break $I_1(x)=\lim_{n \rightarrow \infty} I_1(x_n)$.
\end{proof}

\begin{remark}
    Recall that $T_1$ is closed with respect to addition and 
positive scalar multiplication. We see that $I_1$ also respects these operations.
If $x,y \in T_1$ with $(x_n)$ and $(y_n)$
sequences in $T_0$ such that $(x_n)\nearrow x$ and $(y_n)\nearrow y$, then $(x_n+y_n)\nearrow x+y$ and so 
$$I_1(x+y)=\lim_{n \rightarrow \infty}I(x_n+y_n)=\lim_{n \rightarrow \infty}[I(x_n)+I(y_n)]
=\lim_{n \rightarrow \infty}I(x_n)+\lim_{n \rightarrow \infty}I(y_n)=I_1(x)+I_1(y).$$ 
Furthermore if $c\in \mathbb{R}$ with $c \geq 0$, then $(cx_n) \nearrow cx$ and 
$$I_1(cx)= \lim_{n \rightarrow \infty}I(cx_n)=c\lim_{n \rightarrow \infty}I(x_n)=cI_1(x).$$
\end{remark}

\subsection{The class of integrable functions}

Let $T_0 \subset {\overline{\mathbb{R}}}^X$ once again denote an extended vector lattice and 
$I: T_0 \rightarrow \mathbb{R}$ an $I$-integral. Furthermore, we denote by $T_1$ the 
extension of $T_0$ as in Definition \ref{definition:Tinc} and $I_1$ the extension of $I$ to $T_1$.
\begin{definition}\label{def:upper_lower}
    For $x \in {\overline{\mathbb{R}}}^X$ define:
    \begin{align*}
        \overline{I}(x) := \inf \lbrace I_1(\varphi) : x\leq \varphi, \varphi \in T_1 \rbrace 
        \text{ and }\underline{I}(x) := -\overline{I}(-x) \text{.}
    \end{align*}
    We say $x \in {\overline{\mathbb{R}}}^X$ is {\it $I$-integrable} if 
    $\overline{I}(x)=\underline{I}(x)$ and this value is finite. 
    We denote the common value by $\int x$ and the class of integrable functions by $\mathcal{L}$.
    Thus we have
    $$\mathcal{L}:= \left\lbrace x \in {\overline{\mathbb{R}}}^X  : -\infty < \underline{I}(x)=\overline{I}(x) < \infty \right\rbrace$$
    and $\int: \mathcal{L} \rightarrow \mathbb{R}$ defined by 
    \begin{align*}
        \int x &:= \underline{I}(x)=\overline{I}(x). 
    \end{align*}
        
\end{definition}
Note that if $x \in T_0$, then $x \in T_1$ and so 
\[
    \overline{I}(x)=I_1(x)=I(x).  
\]
Furthermore, since $T_0$ is an extended vector lattice, $-x \in T_0$ and 
\[
    \underline{I}(x)=-\overline{I}(-x)=-I_1(-x)=-I(-x)=I(x).  
\]
Consequently $\overline{I}(x)=\underline{I}(x)=I(x)$ and we see that $x \in \mathcal{L}$ with $\int x =I(x).$ Thus $\int$ 
extends $I$. 
\begin{lemma} \label{BigLemma}
    If $c \in \real$ with $ c \geq 0$ and $x_1,x_2 \in \overline{\mathbb{R}}^X$,
    then: 
    \begin{enumerate}[(i)]
        \item $\overline{I}(cx_1)=c\overline{I}(x_1)$;
        \item $\overline{I}(x_1+x_2)\leq \overline{I}(x_1) +\overline{I}(x_2)$;
        \item if $x_1\leq x_2$ then $\overline{I}(x_1) \leq \overline{I}(x_2)$;
        \item if $\underline{I}(x_1)$ is finite, then $\underline{I}(x_1) \leq \overline{I}(x_1)$;
        \item $\overline{I}(x_1 \vee x_2)+\overline{I}(x_1 \wedge x_2) \leq
            \overline{I}(x_1)+\overline{I}(x_2)$; and
        \item $0 \leq \overline{I}(|x_1|)-\underline{I}(|x_1|)\leq \overline{I}(x_1)-\underline{I}(x_1)$.
    \end{enumerate}
\end{lemma}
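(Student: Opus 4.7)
The plan is to prove the six items in a dependency order so that each part can draw on earlier ones: first (iii), then (i), (ii), (v), (iv), and finally (vi). As a preliminary I will establish $\overline{I}(0) = 0$: the bound $\overline{I}(0) \leq I(0) = 0$ is immediate since $0 \in T_0 \subset T_1$, while for the reverse any $\varphi \in T_1$ with $\varphi \geq 0$ admits a sequence $(\varphi_n) \nearrow \varphi$ in $T_0$, and then $(\varphi_n \vee 0) \nearrow \varphi$ with each term nonnegative, so (D3) and Definition \ref{definition:seqi} force $I_1(\varphi) \geq 0$.

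Item (iii) follows directly from the definition of $\overline{I}$ as an infimum, since $x_1 \leq x_2$ implies $\{\varphi \in T_1 : \varphi \geq x_2\} \subseteq \{\varphi \in T_1 : \varphi \geq x_1\}$. For (i), when $c > 0$ the map $\varphi \mapsto c\varphi$ is a bijection between the two admissible sets (using closure of $T_1$ under nonnegative scalar multiplication) and $I_1(c\varphi) = cI_1(\varphi)$ by the remark following Lemma \ref{lemma:incr_closed}, so the infima scale by $c$; the case $c = 0$ reduces to the preliminary fact. For (ii) one may assume both $\overline{I}(x_i)$ are finite, else the bound is trivial. Given $\varepsilon > 0$, I will pick $\varphi_i \in T_1$ with $\varphi_i \geq x_i$ and $I_1(\varphi_i) \leq \overline{I}(x_i) + \varepsilon$; additivity of $I_1$ on $T_1$ (same remark) together with $\varphi_1 + \varphi_2 \geq x_1 + x_2$ then yields the bound after letting $\varepsilon \to 0$.

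I expect the main obstacle to be (v), where the same $\varepsilon$-approximation strategy reduces matters to establishing the modular identity
\[
    I_1(\varphi_1 \vee \varphi_2) + I_1(\varphi_1 \wedge \varphi_2) = I_1(\varphi_1) + I_1(\varphi_2)
\]
for $\varphi_1, \varphi_2 \in T_1$. For elements of $T_0$ this follows from the pointwise identity $(a \vee b) + (a \wedge b) = a + b$ and linearity (D1). To lift it to $T_1$ I will choose sequences $(\varphi_{i,n}) \nearrow \varphi_i$ in $T_0$; as noted after Definition \ref{definition:Tinc}, $\varphi_{1,n} \vee \varphi_{2,n} \nearrow \varphi_1 \vee \varphi_2$ and $\varphi_{1,n} \wedge \varphi_{2,n} \nearrow \varphi_1 \wedge \varphi_2$, so the $T_0$ identity and Definition \ref{definition:seqi} transport it to $T_1$ in the limit. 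Selecting $\varphi_i$ with $I_1(\varphi_i)$ close to $\overline{I}(x_i)$ and noting $\varphi_1 \vee \varphi_2 \geq x_1 \vee x_2$ and $\varphi_1 \wedge \varphi_2 \geq x_1 \wedge x_2$, the definition of $\overline{I}$ together with the modular identity closes the argument.

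Finally, (iv) will follow by applying (ii) to the identity $x_1 + (-x_1) = 0$ (using the convention of Remark \ref{remark:ambig} at ambiguous points): this gives $0 = \overline{I}(0) \leq \overline{I}(x_1) + \overline{I}(-x_1) = \overline{I}(x_1) - \underline{I}(x_1)$, and the finiteness of $\underline{I}(x_1)$ permits the rearrangement. For (vi), the first inequality is the instance of (iv) applied to $|x_1|$ (finiteness is automatic because $|x_1| \geq 0$ and (iii) yield $\underline{I}(|x_1|) \geq 0$). For the second, I will apply (v) with the pair $(x_1, -x_1)$: since $x_1 \vee (-x_1) = |x_1|$ and $x_1 \wedge (-x_1) = -|x_1|$, this gives $\overline{I}(|x_1|) + \overline{I}(-|x_1|) \leq \overline{I}(x_1) + \overline{I}(-x_1)$, which, after using $\underline{I}(y) = -\overline{I}(-y)$ on both sides, is exactly the desired inequality.
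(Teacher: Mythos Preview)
Your proposal is correct and follows essentially the same route as the paper: (iii) from the inclusion of admissible sets, (i) via the scaling bijection on $T_1$, (ii) and (v) by choosing near-optimal $\varphi_i \in T_1$ and using additivity respectively the modular identity $I_1(\varphi_1\vee\varphi_2)+I_1(\varphi_1\wedge\varphi_2)=I_1(\varphi_1)+I_1(\varphi_2)$, (iv) from (ii) applied to $x_1+(-x_1)$, and (vi) from (v) with the pair $(x_1,-x_1)$. Your treatment is in fact slightly more careful than the paper's in two places---you separate out the case $c=0$ in (i) and you explicitly justify the modular identity on $T_1$ by lifting it from $T_0$ via approximating sequences---whereas the paper simply asserts both.
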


\begin{proof}
\hfill

     \begin{enumerate}[(i)]
        \item If $c \geq 0$, then 
            \begin{align*}
            \overline{I}(cx_1)&= \inf \set{I_1(\varphi)}{cx\leq \varphi, \varphi \in T_1}\\
            &=\inf \set{I_1(c\varphi)}{cx\leq c\varphi, c\varphi \in T_1}\\
            &=c \inf \set{I_1(\varphi)}{x\leq \varphi, \varphi \in T_1}\\
            & =c\overline{I}(x_1) \text{.}
            \end{align*}
        \item 
        If one of the sets $\lbrace I_1(\varphi) : x_1\leq \varphi, \varphi \in T_1 \rbrace$ or 
        $\lbrace I_1(\varphi) : x_2\leq \varphi, \varphi \in T_1 \rbrace$ is empty, then one of 
        $\overline{I}(x_1)$ or $\overline{I}(x_2)$ is infinite and the inequality holds. 
        
        Assume $\lbrace I_1(\varphi) : x_1\leq \varphi, \varphi \in T_1 \rbrace$ and 
         $\lbrace I_1(\varphi) : x_2\leq \varphi, \varphi \in T_1 \rbrace$ 
        are both non-empty. Let $\varphi_1, \varphi_2 \in T_1$ be arbitrary such that 
            $x_1 \leq \varphi_1$ and $x_2 \leq \varphi_2$. 
            Then $\varphi_1+\varphi_2 \in T_1$ with 
            $ x_1+x_2 \leq \varphi_1+\varphi_2$ and therefore 
            $$\overline{I}(x_1+x_2)\leq I_1(\varphi_1+\varphi_2)=I_1(\varphi_1)+I_1(\varphi_2).$$
            Hence we have that
            $\overline{I}(x_1+x_2)\leq \overline{I}(x_1) +\overline{I}(x_2)$.

        \item Assume $x_1 \leq x_2$. If $\varphi \in T_1$ such that $x_2\leq \varphi$, then 
            $x_1\leq \varphi$ and  $\overline{I}(x_1) \leq I_1(\varphi)$.
            Thus $\overline{I}(x_1)$ is a lower bound for $\lbrace I_1(\varphi) : x_2 \leq \varphi, \varphi \in T_1 \rbrace$ 
            and  $\overline{I}(x_1) \leq \overline{I}(x_2)$.
        
        \item 
        Assume $\underline{I}(x_1)$ is finite. Then $0 = \overline{I}(0) =\overline{I}(x_1-x_1)$ and by (ii) we know that 
            $$0\leq \overline{I}(x_1-x_1) \leq \overline{I}(x_1)+\overline{I}(-x_1)=\overline{I}(x_1)-\underline{I}(x_1).$$
            It follows that $\underline{I}(x_1) \leq \overline{I}(x_1)$. 
        
        \item Let $\varphi_1,\varphi_2 \in T_1$ such that $x_1 \leq \varphi_1$ and 
            $ x_2\leq \varphi_2$. Then $ x_1 \vee x_2\leq \varphi_1 \vee \varphi_2$ 
            and $x_1 \wedge x_2\leq \varphi_1 \wedge \varphi_2 $. 
            Thus 
            \[
               \overline{I}(x_1 \vee x_2)+\overline{I}(x_1 \wedge x_2) \leq 
            I_1(\varphi_1 \vee \varphi_2)+I_1(\varphi_1 \wedge \varphi_2)=I_1(\varphi_1)+I_1(\varphi_2). 
            \]
            Since this holds for any  $\varphi_1, \varphi_2 \in T_1$ such that $x_1 \leq \varphi_1$
            and $x_2 \leq \varphi_2$, we have the result.

        \item We know that $| x_1| = x_1\vee (-x_1)$ and $-|x_1| = x_1\wedge(-x_1)$. Thus by (v) above
            $$\overline{I}(|x_1|)+\overline{I}(-|x_1|)=\overline{I}(x_1\vee (-x_1))+\overline{I}(x_1\wedge(-x_1))\leq \overline{I}(x_1)+\overline{I}(-x_1),$$ 
            whence
            $$\overline{I}(|x_1|)-\underline{I}(|x_1|)\leq \overline{I}(x_1)-\underline{I}(x_1).$$
            By (iv) this is greater than 0.   \qedhere
    
        \end{enumerate}

\end{proof}

\begin{remark} \label{rem:absolute_val}
    We are now in a position to prove that $\mathcal{L}$ is an extended vector lattice of extended real-valued functions
    and that $\int$ is an $I$-integral on $\mathcal{L}$. Before we do so, we mention the following vector lattice
    identities:
    \begin{align}
        x \vee y= \frac{1}{2}(x+y+|x-y|), \label{eqn:lattice_1}
    \end{align}
    and
    \begin{align}
        x \wedge y =\frac{1}{2}(x+y-|x-y|) \label{eqn:lattice_2}.
    \end{align}
    Identities \eqref{eqn:lattice_1} and \eqref{eqn:lattice_2} 
    also hold in extended vector lattices from Remark \ref{remark:ambig}.
    Recall that, since we are dealing with possibly infinite-valued functions, the 
    right hand sides of \eqref{eqn:lattice_1} and \eqref{eqn:lattice_2} may not be defined. 
    This is solved by letting any ambiguity equal 0.
    To illustrate, consider the case when $x(t_0)= \infty$, and $y(t_0)=-\infty$ for some $t_0 \in X$. 
    Then $(x\lor y )(t_0) = \infty$. Now $x(t_0)+y(t_0)$ is not defined, however if $f$ is 
    the function from Remark \ref{remark:ambig} with $f(t)= 0$ on $P_{x+y}$ and 
    $f(t)=x(t)+y(t)$ elsewhere, then we have 
    $$\frac{1}{2}(f(t_0)+|x(t_0)-y(t_0)|)= (x\lor y )(t_0)=\infty .$$
    The other cases can be treated similarly. 
\end{remark}

\begin{proposition} \label{thm:lattice}
    The class $\mathcal{L}$ is an extended vector lattice of functions and $\int$ is linear.
\end{proposition}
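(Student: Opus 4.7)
The plan is to verify the two closure properties of $\mathcal{L}$ (under linear combinations and under the lattice operations $\vee, \wedge$) and to read off linearity of $\int$ along the way, using the six inequalities collected in Lemma \ref{BigLemma} as the only engine. The structure of every step will be a sandwich: squeeze $\underline{I}$ and $\overline{I}$ between the same value using subadditivity and monotonicity, and then invoke part (iv) to force equality.

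I would begin with scalar multiplication. For $c \geq 0$ and $x \in \mathcal{L}$, part (i) of Lemma \ref{BigLemma} gives $\overline{I}(cx) = c\overline{I}(x)$, and applied to $-x$ together with the definition $\underline{I}(\cdot) = -\overline{I}(-\cdot)$ it also yields $\underline{I}(cx) = c\underline{I}(x)$, so both quantities equal $c\int x$ and $cx \in \mathcal{L}$. The case $c = -1$ is immediate from the definition of $\underline{I}$, which gives $\overline{I}(-x) = -\int x = \underline{I}(-x)$, and a general $c \in \mathbb{R}$ follows by composition. For additivity, take $x_1, x_2 \in \mathcal{L}$; part (ii) supplies
\[
\overline{I}(x_1 + x_2) \leq \overline{I}(x_1) + \overline{I}(x_2) = \int x_1 + \int x_2,
\]
and the same inequality applied to $-x_1, -x_2$ and negated gives
\[
\underline{I}(x_1 + x_2) = -\overline{I}\bigl(-(x_1+x_2)\bigr) \geq -\bigl(\overline{I}(-x_1) + \overline{I}(-x_2)\bigr) = \int x_1 + \int x_2.
\]
Combined with part (iv), these pinch $\underline{I}(x_1+x_2) = \overline{I}(x_1+x_2) = \int x_1 + \int x_2$, which simultaneously places $x_1 + x_2$ in $\mathcal{L}$ and delivers additivity of $\int$; together with the scalar step this is full linearity.

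For the lattice operations I would route through the absolute value and the identities \eqref{eqn:lattice_1}, \eqref{eqn:lattice_2} recorded in Remark \ref{rem:absolute_val}, which is cleaner than attacking $x_1 \vee x_2$ and $x_1 \wedge x_2$ jointly via part (v). Given $x \in \mathcal{L}$, part (vi) yields $0 \leq \overline{I}(|x|) - \underline{I}(|x|) \leq \overline{I}(x) - \underline{I}(x) = 0$, so $\overline{I}(|x|) = \underline{I}(|x|)$; finiteness of this common value comes from dominating $x$ and $-x$ by $\varphi, \psi \in T_1$ with finite $I_1$ and observing $|x| \leq \varphi \vee \psi \in T_1$, whose $I_1$-value is finite by the modular identity $I_1(\varphi \vee \psi) + I_1(\varphi \wedge \psi) = I_1(\varphi) + I_1(\psi)$ and monotonicity of $I_1$. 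Hence $|x| \in \mathcal{L}$, and then for $x_1, x_2 \in \mathcal{L}$ the element $x_1 - x_2 \in \mathcal{L}$ by the linear closure just established, so $|x_1 - x_2| \in \mathcal{L}$, and \eqref{eqn:lattice_1}, \eqref{eqn:lattice_2} express $x_1 \vee x_2$ and $x_1 \wedge x_2$ as linear combinations of elements of $\mathcal{L}$, completing closure.

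The main hazard I expect is bookkeeping with the ambiguous $\infty - \infty$ sums permitted in the extended vector lattice: at points where $x_1(t) = \infty$ and $x_2(t) = -\infty$, the expressions $x_1 + x_2$, $(-x_1) + (-x_2)$, and indeed $\tfrac{1}{2}(x_1 + x_2 + |x_1 - x_2|)$ are all pinned to an arbitrary value (the convention of Remark \ref{remark:ambig}, which Remark \ref{rem:absolute_val} uses). I would need to verify at each application of Lemma \ref{BigLemma} that the functions entering the inequality, the functions obtained by negation, and the ambient lattice identities are using compatible choices at these ambiguous points; once this is done, the sandwich arguments above are just inequality chases, and no additional machinery is required.
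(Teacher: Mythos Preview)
Your proposal is correct and follows essentially the same route as the paper: reduce lattice closure to closure under linear combinations and absolute value via the identities \eqref{eqn:lattice_1}--\eqref{eqn:lattice_2}, then handle scalars with Lemma~\ref{BigLemma}(i), sums with (ii) and the sandwich against (iv), and $|x|$ with (vi). Your explicit check that $\overline{I}(|x|)$ is finite (via $|x|\le\varphi\vee\psi$ and the modular identity for $I_1$) is a detail the paper glosses over, so your write-up is slightly more careful there.
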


\begin{proof}
    From \eqref{eqn:lattice_1} and \eqref{eqn:lattice_2} it is sufficient for us to show
    $\mathcal{L}$ to be closed under addition, scalar multiplication, 
    and absolute value to conclude 
    that $\mathcal{L}$ is an extended vector lattice.

    We first prove that $\mathcal{L}$ is closed with respect to scalar multiplication.
    Let $x$ be integrable and $c \in \mathbb{R}$. Recall from Lemma \ref{BigLemma} (i) 
    that $\overline{I}(cx)=c\overline{I}(x)$ for nonnegative scalars $c$. 

    {\bf Case 1: }If $c$ is a nonnegative scalar then 
    $$\overline{I}(cx)=c\overline{I}(x)=c\int x,$$
    and 
        $$\underline{I}(cx)=
        -\overline{I}(c(-x))=c(-\overline{I}(-x))=c\underline{I}(x)=c \int x.$$ 
    Thus $cx \in \mathcal{L}$ and 
    $\int cx= c \int x$. 

    {\bf Case 2: }If $c$ is negative, 
    $$\overline{I}(cx)=(-c) \overline{I}(-x)=c\underline{I}(x)=c \int x,$$
    and
       $$ \underline{I}(cx)=-\overline{I}(-cx)=c\overline{I}(x)= c\int x.$$ 
    Thus $cx \in \mathcal{L}$
    and $\int cx= c \int x$.\\

    We now prove $\mathcal{L}$ to be closed under addition.
    Let $x,y \in \mathcal{L}$. Then by Lemma \ref{BigLemma} (ii) we have
    $$\overline{I}(x+y) \leq \overline{I}(x)+\overline{I}(y)=\int x +\int y.$$
    Also, again using Lemma \ref{BigLemma} (ii) we have 
    $$-\underline{I}(x+y)=\overline{I}(-x-y)\leq \overline{I}(-x)+\overline{I}(-y)
        =-\underline{I}(x)-\underline{I}(y). $$
    
    Thus $\underline{I}(x) + \underline{I}(y) \leq \underline{I}(x+y)$ and we have
    $$\int x + \int y \leq \underline{I}(x+y) \leq \overline{I}(x+y) \leq \int x + \int y.$$ 
    Thus $x+y \in \mathcal{L}$ and $\int (x+y)= \int x + \int y$.\\

    The fact that $x \in \mathcal{L}$ implies $|x| \in \mathcal{L}$ follows
    readily from Lemma \ref{BigLemma} (vi). Since if $x \in \mathcal{L}$, then we have 
    \[
        0 \leq \overline{I}(|x|)- \underline{I}(|x|) \leq \overline{I}(x)- \underline{I}(x)=0,
    \]
    hence $\overline{I}(|x|)= \underline{I}(|x|)$ and we conclude $|x| \in \mathcal{L}$.
\end{proof}

\begin{definition}\label{def:D-Integral}
    We call $\int$ on the class $\mathcal{L}$ the Daniell Integral induced by $I$ on $T_0$. 
\end{definition} 

Proper care needs to be taken in order to ensure 
that the Daniell Integral is in fact an $I$-integral (we treat this
in Remark \ref{remark:extend_d_int} below). 
In order to do so, we first prove the following theorem:

\begin{theorem}[Monotone Convergence Theorem for the Daniell Integral] \label{thm:Monotone}
    Let $(x_n)$ be a sequence in $\mathcal{L}$ such that $(x_n) \nearrow x$.  If $\lim_{n \rightarrow \infty} \int x_n$
    is finite, then $x \in \mathcal{L}$ and $\int x= \lim_{n \rightarrow \infty} \int x_n$.
\end{theorem}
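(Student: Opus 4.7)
The plan is to sandwich $\lim_n \int x_n$ between $\overline{I}(x)$ and $\underline{I}(x)$ and then invoke Lemma~\ref{BigLemma}(iv) to collapse the two. The lower bound is easy: for each $n$, applying Lemma~\ref{BigLemma}(iii) to $-x \leq -x_n$ yields $\int x_n = \underline{I}(x_n) \leq \underline{I}(x)$, so $\lim_n \int x_n \leq \underline{I}(x)$. The real work lies in showing $\overline{I}(x) \leq \lim_n \int x_n$, which I plan to do by producing a single majorant $\tilde{\varphi} \in T_1$ with $\tilde{\varphi} \geq x$ and $I_1(\tilde{\varphi})$ within $\varepsilon$ of $\lim_n \int x_n$.

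Fix $\varepsilon > 0$. Using $\overline{I}(x_n) = \int x_n$, select $\varphi_n \in T_1$ with $\varphi_n \geq x_n$ and $I_1(\varphi_n) \leq \int x_n + \varepsilon/2^n$. The naive candidate $\sup_n \varphi_n$ need not lie in $T_1$, so I instead set $\tilde{\varphi}_n := \varphi_1 \vee \cdots \vee \varphi_n$, which is an increasing sequence in $T_1$ with $\tilde{\varphi}_n \geq x_n$ (using that $T_1$ is closed under finite joins). Its pointwise limit $\tilde{\varphi}$ therefore dominates $x$, and Lemma~\ref{lemma:incr_closed} gives $\tilde{\varphi} \in T_1$ with $I_1(\tilde{\varphi}) = \lim_n I_1(\tilde{\varphi}_n)$.

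The crux is the inductive claim $I_1(\tilde{\varphi}_n) \leq \int x_n + \varepsilon(1 - 2^{-n})$, whose step rests on the modular identity
\[
    I_1(\tilde{\varphi}_n) = I_1(\tilde{\varphi}_{n-1}) + I_1(\varphi_n) - I_1(\tilde{\varphi}_{n-1} \wedge \varphi_n),
\]
which follows from additivity of $I_1$ on $T_1$ together with $(u \vee v) + (u \wedge v) = u + v$. Since $\tilde{\varphi}_{n-1} \geq x_{n-1}$ and $\varphi_n \geq x_n \geq x_{n-1}$, the meet lies in $T_1$ and dominates $x_{n-1}$, so monotonicity (Lemma~\ref{BigLemma}(iii)) gives $I_1(\tilde{\varphi}_{n-1} \wedge \varphi_n) \geq \overline{I}(x_{n-1}) = \int x_{n-1}$. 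Substituting into the modular identity along with the inductive hypothesis collapses the $\int x_{n-1}$ terms and produces the desired bound. Passing to the limit yields $\overline{I}(x) \leq I_1(\tilde{\varphi}) \leq \lim_n \int x_n + \varepsilon$, and sending $\varepsilon \to 0$ closes the upper bound. Together with the easy direction, $\overline{I}(x) \leq \lim_n \int x_n \leq \underline{I}(x)$; both quantities are finite, so Lemma~\ref{BigLemma}(iv) forces $\underline{I}(x) = \overline{I}(x)$ and hence $x \in \mathcal{L}$ with $\int x = \lim_n \int x_n$.

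The main obstacle is exactly the inductive bookkeeping in the upper bound: one must arrange that the errors $\varepsilon/2^n$ \emph{telescope} rather than compound across the finite joins, and the modular identity on $T_1$ is the precise tool that enables this, by letting the lower bound $\int x_{n-1}$ on the meet cancel the bulk of $I_1(\tilde{\varphi}_{n-1})$ at each step.
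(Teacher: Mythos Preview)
Your proof is correct, but the technical engine in the upper bound differs from the paper's. The paper majorises the \emph{increments}: it chooses $\varphi_1 \in T_1$ above $x_1$ and $\varphi_k \in T_1$ above $x_k - x_{k-1}$ with $I_1(\varphi_k) \leq \int(x_k - x_{k-1}) + \varepsilon/2^k$, then sets $\rho_n := \varphi_1 + \cdots + \varphi_n$, so that $x_n \leq \rho_n$ and $I_1(\rho_n) \leq \int x_n + \varepsilon$ follows by plain additivity of $I_1$ with no induction. Your approach instead majorises each $x_n$ directly, takes finite joins, and controls $I_1(\tilde{\varphi}_n)$ via the modular identity $I_1(u\vee v) + I_1(u\wedge v) = I_1(u) + I_1(v)$ together with an induction that cancels $\int x_{n-1}$ at each step. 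Both routes land on the same estimate $I_1(\cdot) \leq \int x_n + \varepsilon$ and then invoke Lemma~\ref{lemma:incr_closed}; the paper's telescoping-sum is slightly more economical (it avoids the inductive bookkeeping and the need to check finiteness of $I_1$ on the meets so that subtraction is legitimate), while your lattice-theoretic argument has the virtue of working entirely with the order structure and would transplant more readily to settings where differences $x_k - x_{k-1}$ are awkward.
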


\begin{proof}
    Let $(x_n)$ be a sequence in $\mathcal{L}$ such that $(x_n) \nearrow x$ and assume 
    $\lim_{n \rightarrow \infty} \int x_n$ is finite.
    We have $x_1 \leq x_2 \leq x_3 \leq \cdots \leq x$. Then for each $n \in \mathbb{N}$ we have $-x \leq -x_n$
    and so $\overline{I}(-x) \leq \int -x_n$ which is the same as 
    $\int x_n \leq \underline{I}(x)$.
    Thus $\lim_{n \rightarrow \infty} \int x_n \leq \underline{I}(x)$. \\

    To prove the reverse inequality, let $ \varepsilon>0$ be arbitrary. 
    Then using the definition of infimum on Definition \ref{def:upper_lower} 
     there exists a sequence $(\varphi_n)$ in $T_1$
    such that, 
       $$ x_1 \leq \varphi_1, \quad
        0 \leq x_2 -x_1 \leq \varphi_2, \quad
        \cdots, \quad
        0\leq x_n-x_{n-1}\leq \varphi_n$$
    with the property that $I_1(\varphi_1)\leq \int x_1 +\frac{\varepsilon}{2}$
    and $I_1(\varphi_n)\leq \int (x_n - x_{n-1})+ \frac{\varepsilon}{2^n}$ for each $n \in \mathbb{N}, n>1$.  \\

    Define, for $n\in \mathbb{N}$, the function $\rho_n := \varphi_1 + \varphi_2 + \cdots + \varphi_n$.
    Note, since $0 \leq \varphi_n$ for $n \geq 2$, that $(\rho_n)$ is an increasing sequence in $T_1$.  
    Also note that 
    $$x_n =x_1 +(x_2-x_1)+ \cdots + (x_n -x_{n-1})\leq \varphi_1 + \varphi_2 + \cdots + \varphi_n \leq\rho_n.$$
    Let $\psi \in T_1$
    such that $x \leq \psi$, and for $n \in \mathbb{N}$, set $\psi_n:=\psi \wedge \rho_n $. 
    Then $(\psi_n)$ is an increasing sequence in $T_1$
    and $x_n\leq \psi_n \leq \psi $. 
    Thus the pointwise limit $\lim_{n \rightarrow \infty} \psi_n$ exists and is in 
    $T_1$ by Lemma \ref{lemma:incr_closed}. 
    We also have
    $$x=\lim_{n \rightarrow \infty} x_n\leq \lim_{n \rightarrow \infty} \psi_n,$$
    and thus, again using Lemma \ref{lemma:incr_closed}, we conclude
    $$\overline{I}(x) \leq I_1 \left(\lim_{n \rightarrow \infty} \psi_n \right) =\lim_{n \rightarrow \infty} I_1(\psi_n).$$\\

    Note that, for each $n \in \mathbb{N}$, we have that 
    $$I_1(\rho_n) =I_1(\varphi_1)+\cdots + I_1(\varphi_n)\leq 
    \left( \int x_1 +\frac{\varepsilon}{2} \right) + \left( \int (x_2 -x_1) + \frac{\varepsilon}{4}\right)
    +\cdots + \left(\int (x_n -x_{n-1}) + \frac{\varepsilon}{2^n}\right) $$ and consequently,
    $$I_1(\psi_n)\leq I_1(\rho_n)\leq \int x_n +\left(\frac{\varepsilon}{2}+\frac{\varepsilon}{4}+
     \cdots+\frac{\varepsilon}{2^n}\right) \leq \int x_n + \varepsilon \leq \lim_{n \rightarrow \infty} \int x_n + \varepsilon.$$ \\
 
    Therefore  $$\overline{I}(x) \leq \lim_{n \rightarrow \infty} I_1(\psi_n) 
    \leq \lim_{n \rightarrow \infty} \int x_n + \varepsilon,$$ 
    and this holds for all  $\varepsilon >0$. 
    Thus $\overline{I}(x) \leq \lim_{n \rightarrow \infty} \int x_n $ 
    and we have $$\lim_{n \rightarrow \infty} \int x_n \leq \underline{I}(x) \leq 
    \overline{I}(x) \leq \lim_{n \rightarrow \infty} \int x_n. $$ 
    Thus $x \in \mathcal{L}$ and $\lim_{n \rightarrow \infty} \int x_n=\int x$ as desired.
\end{proof}
\begin{remark} \label{remark:extend_d_int}
    Theorem \ref{thm:Monotone} also holds for decreasing sequences by multiplying the sequence by $-1$.  
    The fact that $\int$ is linear shows (D1) from Definition \ref{def:I_Integral}.
Property (D3) of Definition \ref{def:I_Integral} for $\int$ follows from Lemma \ref{BigLemma} (iii), 
for if $x \in \mathcal{L}$ and $0 \leq x$, then $$0=\int 0 = \overline{I}(0) \leq \overline{I}(x)
= \int x.$$ 
If $(x_n)$ is a sequence in $ \mathcal{L}$ such that $(x_n) \searrow 0$, 
then $(-x_n) \nearrow 0$ and by Theorem \ref{thm:Monotone} $(- \int x_n) = (\int -x_n) \nearrow 0$.
Thus $ (\int x_n )\searrow 0$, proving (D2) of Definition \ref{def:I_Integral}. We conclude $ \int$ is an $I$-integral. 
\end{remark}

We now state and prove analogues of standard results from measure theory viz. Fatou's Lemma and 
Lebesgue's Dominated Convergence Theorem. For this purpose, we need the following lemma.
\begin{lemma} \label{lemma:inf_int}
    If $(x_n)$ is a sequence of nonnegative functions in $\mathcal{L}$, then $\inf_n x_n \in \mathcal{L}$.
\end{lemma}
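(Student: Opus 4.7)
The plan is to reduce the claim to the Monotone Convergence Theorem (Theorem \ref{thm:Monotone}, together with its decreasing-sequence analogue noted in Remark \ref{remark:extend_d_int}) by replacing the given sequence with the sequence of its finite meets. Concretely, for each $n \in \mathbb{N}$ I would set
\[
y_n := x_1 \wedge x_2 \wedge \cdots \wedge x_n.
\]
Since $\mathcal{L}$ is an extended vector lattice by Proposition \ref{thm:lattice}, it is closed under finite meets, so each $y_n \in \mathcal{L}$. By construction, $(y_n)$ is pointwise decreasing, and it converges pointwise to $\inf_n x_n$, which is the object whose integrability we want to establish.

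Next I would control the integrals $\int y_n$. Since each $x_n \geq 0$, we have $y_n \geq 0$, and since $y_n \leq x_1$, monotonicity of $\int$ (which follows from Lemma \ref{BigLemma}(iii) applied to $\overline{I}$) gives
\[
0 \leq \int y_n \leq \int x_1.
\]
Therefore $\bigl(\int y_n\bigr)$ is a decreasing sequence of real numbers that is bounded below, so $\lim_n \int y_n$ exists and is finite.

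Finally, I would invoke the decreasing version of Theorem \ref{thm:Monotone} to conclude that $\inf_n x_n = \lim_n y_n$ lies in $\mathcal{L}$, with $\int \inf_n x_n = \lim_n \int y_n$. There is no serious obstacle here: the only conceptual step is the passage from an arbitrary family of integrable functions (which need not have an integrable countable infimum a priori, since $\mathcal{L}$ is only a lattice under finite operations) to a monotone sequence, after which the heavy lifting is done entirely by the Monotone Convergence Theorem.
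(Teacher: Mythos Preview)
Your proof is correct and follows essentially the same approach as the paper: define the finite meets $y_n = x_1 \wedge \cdots \wedge x_n$, use the lattice structure of $\mathcal{L}$ to see $y_n \in \mathcal{L}$, then apply the Monotone Convergence Theorem to the monotone sequence $(y_n)$. The only cosmetic difference is that the paper passes to $(-y_n)$ and invokes the increasing version directly, whereas you cite the decreasing version from Remark~\ref{remark:extend_d_int}.
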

\begin{proof}
    Let $(x_n)$ be a sequence of nonnegative functions in $\mathcal{L}$.
    For $n \in \mathbb{N}$ define  
     $$\varphi_n := x_1 \wedge x_2 \wedge \cdots \wedge x_n.$$
    Then by Theorem \ref{thm:lattice}, $\varphi_n \in \mathcal{L}$ for all $n \in \mathbb{N}$. Note also that 
    $(\varphi_n)$ is decreasing and nonnegative and thus $(-\varphi_n)$ is an increasing sequence bounded above by 0.
    Therefore $$\lim_{n\rightarrow \infty} \int -\varphi_n \leq 0 < \infty,$$
    and by the Theorem \ref{thm:Monotone} we have  $$\inf_n x_n = \lim_{n\rightarrow \infty}\varphi_n \in \mathcal{L}.$$
\end{proof}

\begin{theorem}[Fatou's Lemma for the Daniell Integral]\label{thm:Fatou}
    Let $(x_n)$ be a sequence of non-negative functions in $\mathcal{L}$. If $\liminf \int x_n < \infty$, then $\liminf x_n$ is
    in $\mathcal{L}$ and 
    \[  
        \int \liminf x_n \leq \liminf \int x_n .
    \]
\end{theorem}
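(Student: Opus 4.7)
The plan is to reduce this to the Monotone Convergence Theorem (Theorem \ref{thm:Monotone}) by the standard trick of writing $\liminf x_n$ as an increasing limit of infima. Set $y_n := \inf_{k \geq n} x_k$ for each $n \in \mathbb{N}$. Each tail $(x_k)_{k \geq n}$ is a sequence of nonnegative functions in $\mathcal{L}$, so Lemma \ref{lemma:inf_int} immediately gives $y_n \in \mathcal{L}$. By construction $(y_n)$ is an increasing sequence in $\mathcal{L}$ with pointwise limit $\liminf_n x_n$.

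Next I would control the integrals. For every $k \geq n$ we have $y_n \leq x_k$, so (D3)/monotonicity of $\int$ gives $\int y_n \leq \int x_k$, whence $\int y_n \leq \inf_{k \geq n} \int x_k$. Taking $n \to \infty$ on the right produces
\[
    \lim_{n \to \infty} \int y_n \;\leq\; \liminf_{n \to \infty} \int x_n \;<\; \infty,
\]
so in particular the limit on the left is finite.

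Now I invoke Theorem \ref{thm:Monotone} on the sequence $(y_n)$: since $(y_n) \nearrow \liminf_n x_n$ in $\mathcal{L}$ with finite limit of integrals, we conclude $\liminf_n x_n \in \mathcal{L}$ and
\[
    \int \liminf_{n \to \infty} x_n \;=\; \lim_{n \to \infty} \int y_n \;\leq\; \liminf_{n \to \infty} \int x_n,
\]
which is the desired inequality. There is no real obstacle here: all the work has been done by Lemma \ref{lemma:inf_int} (which supplies the lattice closure under countable infima of nonnegative integrable functions) and Theorem \ref{thm:Monotone}. The only thing to watch is to verify the hypothesis of MCT, namely the finiteness of $\lim_n \int y_n$, which is precisely what the assumption $\liminf \int x_n < \infty$ guarantees via the inequality $\int y_n \leq \int x_n$.
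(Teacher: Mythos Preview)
Your proof is correct and follows essentially the same route as the paper: define the tail infima $y_n=\inf_{k\ge n}x_k$ via Lemma~\ref{lemma:inf_int}, bound $\int y_n$ by $\liminf\int x_n$, and apply the Monotone Convergence Theorem~\ref{thm:Monotone}. The only cosmetic difference is that you pass through the sharper bound $\int y_n\le \inf_{k\ge n}\int x_k$ while the paper uses $\int y_n\le \int x_n$ directly; both yield the same conclusion.
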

\begin{proof}
    Let $\psi_n := \inf_{k\geq n} x_k$. From Lemma \ref{lemma:inf_int}, $\psi_n \in \mathcal{L}$ for $n \in \mathbb{N}$.
    Furthermore, $(\psi_n)$ is an increasing sequence and $\psi_n \leq x_n$ for $n \in \mathbb{N}$. 
    Thus $\int \psi_n \leq \int x_n$ for all $n \in \mathbb{N}$, and so 
    \[
        \lim_{n \rightarrow \infty} \int \psi_n = \liminf \int \psi_n \leq \liminf \int x_n < \infty. 
    \]
    Thus by the Theorem \ref{thm:Monotone} we have 
    $\lim_{n \rightarrow \infty} \psi_n \in \mathcal{L}$ and
    \[
    \int \lim_{n \rightarrow \infty} \psi_n=\lim_{n \rightarrow \infty} \int \psi_n \leq \liminf \int x_n.
    \] 
    But $\lim_{n \rightarrow \infty} \psi_n =\liminf x_n$ and we conclude $\int \liminf x_n \leq \liminf \int x_n$.
\end{proof}

\begin{theorem}[Lebesgue's Dominated Convergence Theorem for the Daniell Integral]\label{thm:dom}
    Let $(x_n)$ be a sequence in $\mathcal{L}$ such that $\left| x_n \right| \leq z$ for some $z \in \mathcal{L}$. 
    If $\lim_{n \rightarrow \infty} x_n =x$ pointwise, then $x \in \mathcal{L}$ 
    and $\lim_{n \rightarrow \infty} \int x_n=\int x$. 
\end{theorem}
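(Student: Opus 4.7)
The plan is to adapt the standard Lebesgue--style proof, pushing everything through Fatou's Lemma for the Daniell integral (Theorem \ref{thm:Fatou}), which is already available. The idea is to consider the two nonnegative sequences $z + x_n$ and $z - x_n$ and apply Fatou to each.

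First I would observe that since $|x_n| \le z$, the hypothesis forces $-z \le x_n \le z$, so both $z + x_n$ and $z - x_n$ are nonnegative; and since $\mathcal{L}$ is an extended vector lattice by Proposition \ref{thm:lattice}, both sequences lie in $\mathcal{L}$. Monotonicity of $\int$ (via Lemma \ref{BigLemma}(iii), or directly from positivity and linearity) applied to $-z \le x_n \le z$ yields $|\int x_n| \le \int z$, so $\liminf \int x_n$ and $\limsup \int x_n$ are both finite. This is important because Fatou's Lemma, as stated in Theorem \ref{thm:Fatou}, requires the $\liminf$ of the integrals to be finite in order to conclude that the pointwise $\liminf$ is integrable.

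Applying Fatou to $(z + x_n)$: one has $\liminf \int (z + x_n) = \int z + \liminf \int x_n < \infty$, so Theorem \ref{thm:Fatou} gives $\liminf (z + x_n) \in \mathcal{L}$ with
\[
\int \liminf (z + x_n) \le \int z + \liminf \int x_n.
\]
Since $x_n \to x$ pointwise, $\liminf(z + x_n) = z + x$. Hence $z + x \in \mathcal{L}$, and then $x = (z+x) - z \in \mathcal{L}$ by Proposition \ref{thm:lattice}. Subtracting $\int z$ from both sides yields $\int x \le \liminf \int x_n$. Applying Fatou symmetrically to the nonnegative sequence $(z - x_n)$ with $\liminf (z - x_n) = z - x$ gives $\int(z - x) \le \int z - \limsup \int x_n$, i.e.\ $\limsup \int x_n \le \int x$. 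Combining the two inequalities forces $\lim_{n \to \infty} \int x_n = \int x$.

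The main obstacle is really bookkeeping rather than genuine difficulty: I have to confirm that the differences $z + x_n$, $z - x_n$, and the limit $z + x$ are actually well-defined in the extended vector lattice sense (this is precisely the content of Remark \ref{remark:ambig}, since $|x_n| \le z < \infty$ wherever $z$ is finite), and that the finiteness hypothesis needed by Fatou is met, which is exactly where domination by $z$ is used. Once those points are checked, the proof is a short sandwich argument.
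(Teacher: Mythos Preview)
Your proposal is correct and follows essentially the same route as the paper: apply Fatou (Theorem~\ref{thm:Fatou}) to the nonnegative sequences $z+x_n$ and $z-x_n$, use domination by $z$ to secure the finiteness hypothesis, and sandwich $\int x$ between $\limsup \int x_n$ and $\liminf \int x_n$. The paper's bound $0\le x_n+z\le 2z$ plays the same role as your observation $|\int x_n|\le \int z$, and your explicit remark that $x=(z+x)-z\in\mathcal{L}$ is a point the paper leaves implicit.
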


\begin{proof}
    The proof is the same as that of the standard theorem in measure theory. We provide it for completeness.
    Note $0 \leq x_n + z \leq 2z$ for all $n \in \mathbb{N}$. Now $2z \in \mathcal{L}$, 
    hence $\liminf \int (x_n +z) \leq \int 2z <\infty$ for all $n \in \mathbb{N}$. 
    Thus by Theorem \ref{thm:Fatou}, we have 
    \begin{align}
        \int x+ \int z=\int (x+z) =\int \liminf(x_n+z)\leq \liminf \int (x_n+z) 
            = \liminf \int x_n +\int z. \label{eqn:leb_1}
    \end{align}
        
    Therefore, after subtracting $\int z$ from both sides, $\int x \leq \liminf \int x_n$. 
    Using the same argument on $-x_n +z$, we get
    \begin{align}
        -\int x+ \int z=\int (-x+z) &=\int \liminf(-x_n+z) \nonumber \\
        &\leq \liminf \int (-x_n+z) = -\limsup \int x_n +\int z. \label{eqn:leb_2}
    \end{align}
    Therefore, after subtracting $\int z$ from both sides, $-\int x \leq -\limsup \int x_n$ which is the same as $\limsup \int x_n \leq \int x$.
    
    Finally, combining \eqref{eqn:leb_1} and \eqref{eqn:leb_2} we have 
    \[
        \int x \leq \liminf \int x_n \leq \limsup \int x_n \leq \int x.
    \] 
    This shows that these inequalities are, in fact, equalities and we conclude that
    $$\lim_{n \rightarrow \infty} \int x_n =\int x.$$
\end{proof}

Another result of interest gives necessary and sufficient conditions for a function $x \in  {\overline{\mathbb{R}}}^X $
to be integrable. 

\begin{theorem}\label{thm:approximation}
    For any $x \in  {\overline{\mathbb{R}}}^X $ we have $x \in \mathcal{L}$ if and only if for every
    $\varepsilon>0$ there exists an $x_\varepsilon \in T_0$ such that $\overline{I}(|x-x_\varepsilon|)<\varepsilon$. 
\end{theorem}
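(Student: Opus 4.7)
The plan is to establish both directions using the definitions of $\overline{I}$, $\underline{I}$, and $T_1$, together with the monotonicity and subadditivity properties of $\overline{I}$ established in Lemma \ref{BigLemma}.

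For the sufficiency direction ($\Leftarrow$), suppose for every $\varepsilon>0$ there is $x_\varepsilon \in T_0$ with $\overline{I}(|x-x_\varepsilon|) < \varepsilon$. I would exploit the pointwise bounds $x \leq x_\varepsilon + |x-x_\varepsilon|$ and $-x \leq -x_\varepsilon + |x-x_\varepsilon|$. Since $\pm x_\varepsilon \in T_0$ satisfies $\overline{I}(\pm x_\varepsilon) = I(\pm x_\varepsilon)$, subadditivity and monotonicity (Lemma \ref{BigLemma} (ii), (iii)) give $\overline{I}(x) \leq I(x_\varepsilon) + \varepsilon$ and $\overline{I}(-x) \leq -I(x_\varepsilon) + \varepsilon$, i.e.\ $\underline{I}(x) \geq I(x_\varepsilon) - \varepsilon$. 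Combined with Lemma \ref{BigLemma} (iv), this gives $0 \leq \overline{I}(x) - \underline{I}(x) \leq 2\varepsilon$, with the common value trapped in $[I(x_\varepsilon)-\varepsilon,\,I(x_\varepsilon)+\varepsilon] \subset \mathbb{R}$. Letting $\varepsilon \to 0$ concludes $x \in \mathcal{L}$.

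For the necessity direction ($\Rightarrow$), suppose $x \in \mathcal{L}$ and fix $\varepsilon > 0$. Using the infimum in Definition \ref{def:upper_lower}, I would choose $\varphi \in T_1$ with $x \leq \varphi$ and $I_1(\varphi) < \int x + \varepsilon/2$. Then, using Definitions \ref{definition:Tinc} and \ref{definition:seqi}, I would pick $(\varphi_n) \subset T_0$ with $\varphi_n \nearrow \varphi$ and select $n$ large enough that $I_1(\varphi) - I(\varphi_n) < \varepsilon/2$. I claim $x_\varepsilon := \varphi_n$ works. The key step is the pointwise inequality
\[
|x - \varphi_n| \leq (\varphi - x) + (\varphi - \varphi_n),
\]
which holds because $x \leq \varphi$ and $\varphi_n \leq \varphi$ make both summands nonnegative. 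Applying $\overline{I}$, subadditivity yields $\overline{I}(|x-\varphi_n|) \leq \overline{I}(\varphi-x) + \overline{I}(\varphi-\varphi_n)$. The first term is bounded by a second use of subadditivity: $\overline{I}(\varphi - x) \leq \overline{I}(\varphi) + \overline{I}(-x) = I_1(\varphi) - \int x < \varepsilon/2$. For the second, since $T_1$ is closed under addition and $-\varphi_n \in T_0 \subset T_1$, one has $\varphi - \varphi_n \in T_1$, so $\overline{I}(\varphi-\varphi_n) = I_1(\varphi-\varphi_n) = I_1(\varphi) - I(\varphi_n) < \varepsilon/2$ by the linearity of $I_1$ noted after Lemma \ref{lemma:incr_closed}. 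Summing gives $\overline{I}(|x-\varphi_n|) < \varepsilon$.

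The main obstacle I anticipate is the careful treatment of points where $x$ or $\varphi$ takes the value $\pm\infty$: the differences $\varphi - x$ and $\varphi - \varphi_n$ must be interpreted via the convention of Remark \ref{remark:ambig}, and a brief case analysis is needed to confirm that the displayed pointwise inequality survives at those exceptional points. A minor auxiliary fact also used above is that $\overline{I}(z) = I_1(z)$ for every $z \in T_1$; this follows by taking $\varphi = z$ as a competitor in Definition \ref{def:upper_lower} and, for the reverse inequality, invoking the monotonicity of $I_1$ provided by Lemma \ref{lemma:limit}.
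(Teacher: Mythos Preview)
Your proposal is correct and follows essentially the same approach as the paper's proof: both directions use the pointwise bounds $\pm x \leq \pm x_\varepsilon + |x-x_\varepsilon|$ for sufficiency, and for necessity both pass through an intermediate $\varphi\in T_1$ with $x\leq\varphi$, then descend to some $x_\varepsilon\in T_0$ approximating $\varphi$, bounding $\overline{I}(|x-x_\varepsilon|)$ via the triangle-type estimate $|x-x_\varepsilon|\leq (\varphi-x)+(\varphi-x_\varepsilon)$. Your extra remarks on the $\pm\infty$ convention and on $\overline{I}=I_1$ on $T_1$ are accurate and make explicit points the paper leaves implicit.
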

\begin{proof}
    Let  $x \in  {\overline{\mathbb{R}}}^X $.
    First assume for every $\varepsilon>0$ there exists a $x_\varepsilon \in T_0$ such that $\overline{I}(|x-x_\varepsilon|)<\varepsilon$. 
    Let $\varepsilon>0$ and $x_\varepsilon \in T_0$ as in the hypothesis. Then 
    \[
        x=x_\varepsilon+x-x_\varepsilon \leq x_\varepsilon+|x-x_\varepsilon|,
    \]
    and so, using Lemma \ref{BigLemma}, we obtain
    \begin{align}
        \overline{I}(x)\leq I(x_\varepsilon)+\overline{I}(|x-x_\varepsilon|)< I(x_\varepsilon)+\varepsilon. \label{eqn:approx_1}
    \end{align}
    Similarly, we have
    \[
        -x= -x_\varepsilon -x + x_\varepsilon\leq -x_\varepsilon +|x-x_\varepsilon|,  
    \]
    and 
    \begin{align}
        \overline{I}(-x)\leq I(-x_\varepsilon)+\overline{I}(|x-x_\varepsilon|)< I(-x_\varepsilon)+\varepsilon.  \label{eqn:approx_2}
    \end{align}
    Adding together \eqref{eqn:approx_1} and \eqref{eqn:approx_2}, we obtain 
    \[
        0\leq \overline{I}(x)-\underline{I}(x)=\overline{I}(x)+\overline{I}(-x)\leq I(x_\varepsilon)+I(-x_\varepsilon)+2\varepsilon=2\varepsilon.  
    \]
    Since this holds for all $\varepsilon>0$ we have that $x$ is integrable. 

    Conversely, assume that $x\in \mathcal{L}$. Then for every $\varepsilon>0$ there exists $\varphi\in T_1$ with $x \leq \varphi$ such that 
    \[
        \overline{I}(x) \leq I_1(\varphi)<\overline{I}(x)+\frac{\varepsilon}{2}.  
    \]
    Thus, using the fact that $x$ is integrable and that 
    $x\leq \varphi$, we have from Lemma \ref{BigLemma}
    \begin{align}
        \overline{I}(|\varphi-x|)=\overline{I}(\varphi-x)
        \leq I_1(\varphi)+\overline{I}(-x)=I_1(\varphi)-\underline{I}(x)
    \end{align}
    Since $x \in \mathcal{L}$ we know that $\overline{I}(x)= \underline{I}(x)$ and so 
    \begin{align}
        \overline{I}(|\varphi-x|) \leq I_1(\varphi)-\underline{I}(x)=I_1(\varphi)-\overline{I}(x)  
        <\frac{\varepsilon}{2}. \label{eqn:approx_3}
    \end{align}
    Since $\varphi\in T_1$ there exists an $x_\varepsilon \in T_0$ such that $x_\varepsilon\leq \varphi$ and 
    \[
        I_1(\varphi)-\frac{\varepsilon}{2}<I(x_\varepsilon)
    \]
    Therefore 
    \[
        I_1(|\varphi-x_\varepsilon|)= I_1(\varphi- x_\varepsilon)=I_1(\varphi)-I_1(x_\varepsilon)=I_1(\varphi-x_\varepsilon)=I_1(|\varphi-x_\varepsilon|)<\frac{\varepsilon}{2}. 
    \]
    Using Lemma \ref{BigLemma} together with \eqref{eqn:approx_3} we have that 
    \[
        \overline{I}(|x-x_\varepsilon|)\leq \overline{I}(|x-\varphi|+|\varphi-x_\varepsilon|) \leq \overline{I}(|x-\varphi|)+I_1(|\varphi-x_\varepsilon|)<\varepsilon.    
    \]
    This proves the result.    
\end{proof}

To end this section, we describe what the functions in $\mathcal{L}$ `look like'. This can be seen as an 
analogue to the regularity theorem for the Lebesgue measure (see Appendix, Theorem~\ref{thm:regularity}).

\begin{definition}
    We say $x \in \mathcal{L}$ is a null function if $\int |x|=0$. 
\end{definition}

\begin{proposition}\label{prop:null}
    If $x \in \mathcal{L}$ is a null function and $y\in {\overline{\mathbb{R}}}^X$ 
    is such that $|y|\leq |x|$, then $y$ is also a null function.
\end{proposition}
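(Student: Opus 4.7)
The plan is to establish $y \in \mathcal{L}$ first (since being a null function requires membership in $\mathcal{L}$), and only then compute $\int |y|$. The key observation is that $|y| \leq |x|$ unfolds to the two-sided bound $-|x| \leq y \leq |x|$, which will let me squeeze $\overline{I}(y)$ and $\underline{I}(y)$ from both sides using monotonicity.

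First I would record the setup: since $x \in \mathcal{L}$, Proposition \ref{thm:lattice} gives $|x| \in \mathcal{L}$, and from $\int |x| = 0$ I read off $\overline{I}(|x|) = \underline{I}(|x|) = 0$, hence also $\overline{I}(-|x|) = -\underline{I}(|x|) = 0$. Then I apply Lemma \ref{BigLemma}(iii) in both directions to $-|x| \leq y \leq |x|$: the right inequality gives $\overline{I}(y) \leq \overline{I}(|x|) = 0$, while the left gives $0 = \overline{I}(-|x|) \leq \overline{I}(y)$. Hence $\overline{I}(y) = 0$. Replacing $y$ by $-y$ (which still satisfies $|{-y}| = |y| \leq |x|$) gives $\overline{I}(-y) = 0$, so $\underline{I}(y) = -\overline{I}(-y) = 0$. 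Thus $\underline{I}(y) = \overline{I}(y) = 0$, and this value is finite, so by Definition \ref{def:upper_lower}, $y \in \mathcal{L}$ with $\int y = 0$.

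Once $y \in \mathcal{L}$, Proposition \ref{thm:lattice} yields $|y| \in \mathcal{L}$. Now I exploit monotonicity of the Daniell integral (D3) combined with linearity: from $|x| - |y| \geq 0$ and $|x| - |y| \in \mathcal{L}$, Remark \ref{remark:extend_d_int} gives $0 \leq \int(|x| - |y|) = \int|x| - \int|y| = -\int|y|$. Combined with $\int|y| \geq 0$ (again by D3, since $|y| \geq 0$), this forces $\int|y| = 0$, so $y$ is a null function.

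The only subtle point — and the potential obstacle — is that $|y| \leq |x|$ does \emph{not} let one directly conclude $|y| \in \mathcal{L}$ by a one-line monotonicity argument, because $\mathcal{L}$-membership demands agreement of the upper and lower integrals, not just finiteness of $\overline{I}(|y|)$. The trick is to work with $y$ rather than $|y|$ first, since the two-sided bound $-|x| \leq y \leq |x|$ naturally produces matching upper and lower estimates; after that, closure of $\mathcal{L}$ under absolute value does the rest.
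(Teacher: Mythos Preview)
Your proof is correct and follows essentially the same route as the paper's: both arguments squeeze using monotonicity of $\overline{I}$ (Lemma~\ref{BigLemma}(iii)) together with $\overline{I}(|x|)=0$. The only cosmetic difference is that the paper decomposes $y$ into its positive and negative parts and applies the squeeze to $y\lor 0$ and $(-y)\lor 0$ separately (so that $\int|y|=0$ drops out from $|y|=(y\lor 0)+((-y)\lor 0)$), whereas you apply the two-sided bound $-|x|\le y\le |x|$ directly to $y$ and handle $|y|$ afterwards via closure of $\mathcal{L}$ under absolute value.
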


\begin{proof}
    Let $x \in \mathcal{L}$ be a null function and $y\in {\overline{\mathbb{R}}}^X$ 
    is such that $|y|\leq |x|$.
   Note that $$0\leq \underline{I}(y\lor 0)\leq \overline{I}(y\lor 0) \leq\overline{I}(|y|) \leq \overline{I}(|x|)=\int |x|=0 .$$ 
   Thus $y\lor 0 \in \mathcal{L}$. Similarly, $(-y) \lor 0 \in \mathcal{L}$ and $y=[y\lor 0] -[(-y)\lor 0] \in \mathcal{L}$ with 
    $\int y =\int (y\lor 0) - \int ((-y)\lor 0) = 0$. Therefore $y$ is also a null function.\\
\end{proof}

\begin{definition} \label{def:T2}
    Define $T_2 \subset \mathcal{L}$ as follows:
    \[
        T_2:= \left\lbrace x \in \mathcal{L}: \text{ there exists } (x_n) \subset T_1 \text{ such that }
        (x_n) \searrow x \text{ and } -\infty < \int x < \infty \right\rbrace  . 
    \]
\end{definition}

Up until this point we have defined multiple sets of functions: $T_0$, $T_1$, $T_2$, and $\mathcal{L}$.
The relationship between these sets are 
\[
    T_0 \subset T_1 \cap \mathcal{L} \subset T_2 \subset \mathcal{L}.
\]

\begin{proposition} \label{prop:T2}
    If $x \in T_2$, then there exists a 
    sequence $(\varphi_n) \subset T_1$ with $I_1(\varphi_n)<\infty$ for $n \in \mathbb{N}$
    such that $(\varphi_n) \searrow x$. 
\end{proposition}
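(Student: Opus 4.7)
The plan is to start from the sequence $(x_n) \subset T_1$ with $(x_n) \searrow x$ guaranteed by the definition of $T_2$ (Definition~\ref{def:T2}), and to cap each $x_n$ from above by a single fixed function $\psi \in T_1$ with finite $I_1$-value that still dominates $x$. Since $T_1$ is closed under $\wedge$ and $\varphi_n := x_n \wedge \psi$ will still decrease pointwise to $x$, this should give the desired sequence.

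First I would use that $x \in \mathcal{L}$ with $\int x < \infty$. Since $\overline{I}(x) = \int x < \infty$, Definition~\ref{def:upper_lower} provides some $\psi \in T_1$ with $x \leq \psi$ and $I_1(\psi) < \infty$ (for instance, take any $\psi$ with $I_1(\psi) \leq \int x + 1$). Then I would set
\[
\varphi_n := x_n \wedge \psi, \qquad n \in \mathbb{N}.
\]
Because $T_1$ is closed under $\wedge$, each $\varphi_n$ lies in $T_1$. Since $(x_n)$ is decreasing, so is $(\varphi_n)$. Moreover $x \leq x_n$ and $x \leq \psi$ give $x \leq \varphi_n$; and pointwise,
\[
\varphi_n(t) = \min\{x_n(t), \psi(t)\} \longrightarrow \min\{x(t), \psi(t)\} = x(t),
\]
so $(\varphi_n) \searrow x$.

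It remains to show $I_1(\varphi_n) < \infty$. Since $\varphi_n \leq \psi$ and both lie in $T_1$, Lemma~\ref{lemma:limit} gives $I_1(\varphi_n) \leq I_1(\psi) < \infty$. On the other side, $x \leq \varphi_n$ and $x \in \mathcal{L}$, together with monotonicity of $\overline{I}$ from Lemma~\ref{BigLemma}(iii), yields $-\infty < \int x = \overline{I}(x) \leq \overline{I}(\varphi_n) = I_1(\varphi_n)$, so $I_1(\varphi_n)$ is indeed finite. This completes the argument.

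The proof is essentially a one-move argument, so there is no serious obstacle; the only thing to notice is that the raw decreasing sequence $(x_n)$ supplied by the definition of $T_2$ may have $I_1(x_n) = \infty$, and the remedy is to truncate from above by a single element of $T_1$ dominating $x$ with finite $I_1$-value, whose existence comes directly from the finiteness of $\overline{I}(x)$.
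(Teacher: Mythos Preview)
Your proof is correct and is essentially identical to the paper's argument: both take the decreasing $T_1$-sequence from the definition of $T_2$, truncate it from above by a single $\psi\in T_1$ with $x\le\psi$ and $I_1(\psi)<\infty$ obtained from $\overline{I}(x)<\infty$, and observe that the truncated sequence still decreases to $x$ while now having finite $I_1$-values. Your additional check that $I_1(\varphi_n)>-\infty$ is harmless but unnecessary, since $I_1$ takes values in $\mathbb{R}\cup\{\infty\}$ by Definition~\ref{definition:seqi}.
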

\begin{proof}
    Assume $x \in T_2$. Then by Definition \ref{def:T2} 
    there exists a sequence $(\psi_n) \subset T_1$ such that 
    $(\psi_n) \searrow x$. Also, since $x \in \mathcal{L}$ we have 
    $\int x = \overline{I}(x)<\infty$ and there 
    exists a $\varphi \in T_1$ such that $x \leq \varphi$ and
    $$ \overline{I}(x) \leq I_1(\varphi) \leq \overline{I}(x)+1<\infty.$$
    We thus have for all $n\in \mathbb{N}$ that
    \[
        x \leq \varphi \land \psi_n \leq \psi_n,
    \]
    and since $(\psi_n)\searrow x$ we conclude                        
    $(\varphi \land \psi_n) \searrow x$. Furthermore, we have 
    \[
        I_1(\varphi \land \psi_n ) \leq I_1(\varphi) \leq \overline{I}(x)+1 < \infty  
    \]
    for 
    each $n \in \mathbb{N}$. Thus $(\varphi \land \psi_n)$ is a sequence in $T_1$ with 
    $I_1(\varphi \land \psi_n ) <\infty$ for $n \in \mathbb{N}$
    such that $(\varphi \land \psi_n) \searrow x$.
\end{proof}
\begin{theorem}[Characterisation Theorem] \label{thm:char}
    For any function $x\in {\overline{\mathbb{R}}}^X$, we have that 
    $x\in \mathcal{L}$ if and only if $x=y-z$ where $y\in T_2$ and $z$ is a nonnegative null function.
\end{theorem}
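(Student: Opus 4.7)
The plan is to handle the easy direction first. If $x = y - z$ with $y \in T_2 \subset \mathcal{L}$ and $z$ a nonnegative null function (so in particular $z \in \mathcal{L}$), then by Proposition~\ref{thm:lattice} we conclude $x \in \mathcal{L}$ immediately. All the work is in the forward direction.

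For the forward direction, suppose $x \in \mathcal{L}$. I would use the definition of $\overline{I}$ as an infimum to extract, for each $n \in \mathbb{N}$, a function $\varphi_n \in T_1$ with $x \leq \varphi_n$ and $I_1(\varphi_n) \leq \int x + 1/n$. Since $T_1$ is closed under finite meets, I can set $y_n := \varphi_1 \wedge \varphi_2 \wedge \cdots \wedge \varphi_n \in T_1$. By construction $(y_n)$ is decreasing, satisfies $y_n \geq x$, and obeys $I_1(y_n) \leq I_1(\varphi_n) \leq \int x + 1/n < \infty$. I then need to argue each $y_n$ lies in $\mathcal{L}$: expressing $y_n$ as the pointwise limit of an increasing sequence in $T_0$ whose $I$-values converge to the finite value $I_1(y_n)$, Theorem~\ref{thm:Monotone} gives $y_n \in \mathcal{L}$ with $\int y_n = I_1(y_n)$. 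In particular $\int x \leq \int y_n \leq \int x + 1/n$, so $\int y_n \to \int x$.

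Now I would set $y := \lim_n y_n$, which is the pointwise infimum of the $y_n$. Applying Theorem~\ref{thm:Monotone} to the increasing sequence $(-y_n) \nearrow -y$, whose integrals converge to the finite limit $-\int x$, yields $y \in \mathcal{L}$ with $\int y = \int x$. Since $(y_n) \subset T_1$ witnesses $(y_n) \searrow y$ and $\int y$ is finite, we have $y \in T_2$ by Definition~\ref{def:T2}. Finally define $z := y - x$. Then $z \in \mathcal{L}$ by Proposition~\ref{thm:lattice}, and $y \geq x$ (since each $y_n \geq x$) gives $z \geq 0$; moreover $\int |z| = \int z = \int y - \int x = 0$, so $z$ is a nonnegative null function, completing the decomposition $x = y - z$.

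The only subtle point I anticipate is verifying that each $y_n$ truly lies in $\mathcal{L}$ (not merely in $T_1$), since $T_1$ is not a vector lattice and elements of $T_1$ need not be integrable unless their $I_1$-value is finite. The argument via Theorem~\ref{thm:Monotone} applied to a defining increasing sequence in $T_0$ resolves this, so there is no real obstacle beyond bookkeeping. The construction of $y_n$ as a meet (rather than directly using the $\varphi_n$) is the key trick, needed to produce a \emph{decreasing} sequence so that the resulting limit sits in $T_2$.
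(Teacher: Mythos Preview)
Your proposal is correct and follows essentially the same approach as the paper: choose $\varphi_n \in T_1$ with $x \leq \varphi_n$ and $I_1(\varphi_n) \leq \int x + 1/n$, replace by the decreasing meets $y_n = \varphi_1 \wedge \cdots \wedge \varphi_n$, and pass to the limit $y$ via the Monotone Convergence Theorem to obtain the $T_2$ part and the null remainder $z = y - x$. Your justification that $y_n \in \mathcal{L}$ via Theorem~\ref{thm:Monotone} applied to a defining $T_0$-sequence is slightly more explicit than the paper's (which simply asserts that $I_1(y_n) < \infty$ forces $y_n \in \mathcal{L}$), but the substance is identical.
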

\begin{proof}
    Assume firstly that $x=y-z$ with $y \in T_2$ and $z\geq 0$ a null function. 
    Then both $z,y\in \mathcal{L}$ and so $x\in \mathcal{L}$ by Theorem \ref{thm:lattice}.\\

    Now assume that $x \in \mathcal{L}$. Then $\int x = \overline{I}(x) < \infty$ and for every $n \in \mathbb{N}$ there exists 
    $\varphi_n \in T_1$ such that $x \leq \varphi_n$ and 
    $$\overline{I}(x)\leq I_1(\varphi_n) \leq \overline{I}(x) + \frac{1}{n}.$$ 

    For $n \in \mathbb{N}$, let $$y_n:= \varphi_1 \land \varphi_2 \land \cdots \land \varphi_n.$$
    Note that for all $n \in \mathbb{N}$, $y_n \leq \varphi_n$ and that $y_n \in T_1$.
    Also $(y_n)$ is a decreasing sequence with $x \leq y_n$
    and 
    $$-\infty <\overline{I}(x)\leq I_1(y_n) \leq \overline{I}(x) + \frac{1}{n}<\infty$$ 
    for $n \in \mathbb{N}$.
    Since, for all $n\in \mathbb{N}$, $I_1(y_n) < \infty$ we have that $y_n \in \mathcal{L}$
    and $I_1(y_n)=\int y_n$. 
    Furthermore, $\lim_{n \rightarrow \infty} \int y_n=\lim_{n \rightarrow \infty} I_1 (y_n)$ is finite. 
    Let $y:=\lim_{n \rightarrow \infty} y_n$ pointwise. Then by Theorem 
    \ref{thm:Monotone} we have $y \in \mathcal{L}$, hence also in $T_2$, 
    and $\lim_{n \rightarrow \infty} \int y_n = \int y$.
    
    We thus have 
    $$\int x = \overline{I}(x) = \lim_{n \rightarrow \infty} I_1(y_n) = \lim_{n \rightarrow \infty} \int y_n = \int y.$$ 
    We also know $x \leq y$ so if we set $z:=y-x$ then it follows that $0 \leq z$. 
    Furthermore, $z \in \mathcal{L}$ and 
    $$\int |z|= \int z = \int y -\int x = 0.$$ 
    Hence $z$ is a null function and $x=y-z$ where $y \in T_2$ and 
    $z$ is a null function. This completes the proof.
\end{proof}


\section{Measure from the Integral} \label{section:measure}

We described a procedure that extends a linear functional to one that 
allows the classical measure-theoretic limit theorems in \ref{section:extension}. 
We now show that this extension is, in fact, the same as the measure-theoretic construction. 
The material of this section 
closely follows that of \cite[Chapter~16]{royden1988real}. \\

Throughout this section, we let $X$ be a set, $T_0 \subset {\overline{\mathbb{R}}}^X$
 denote an extended vector lattice, and 
$I: T_0 \rightarrow \mathbb{R}$ an $I$-integral. 
Let $\mathcal{L}$ be the extension of $T_0$ and $\int$ 
be the extension of $I$ as in Definition \ref{def:upper_lower}.
If $A\subset X$, let $\chi_A : X \rightarrow \mathbb{R}$ denote the characteristic function. 
Explicitly, for $t \in X$ define $\chi_A$ by
\[
    \chi_A(t):= \begin{cases}
        1, &\text{ if } t \in A\\
        0, &\text{ if } t \notin A.
    \end{cases}  
\]
To begin with, we define the Daniell analogue of measurable functions. 

\begin{definition}
    We say a nonnegative function $x \in {\overline{\mathbb{R}}}^X$ is {\it Daniell measurable} if for all $\varphi \in \mathcal{L}$ 
    we have $\varphi \land x \in \mathcal{L}$. We say a set $A \subset X$ is {\it Daniell measurable} if its characteristic function
    $\chi_A$ is Daniell measurable.
\end{definition}

Using this definition we have: 
\begin{lemma} \label{lemma:cap_lemma}
    Let $x,y \in {\overline{\mathbb{R}}}^X$. Then
    \begin{enumerate}[(i)]
        \item if $x$ and $y$ are Daniell measurable functions, then so are $x \lor y$ and $x \land y$; and
        \item if $(x_n)$ is a sequence of nonnegative Daniell measurable 
        functions converging pointwise to a function $x$, then $x$ is also Daniell measurable.
    \end{enumerate}    
     
\end{lemma}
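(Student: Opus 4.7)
The plan is to treat the two parts separately, leveraging that $\mathcal{L}$ is itself a vector lattice (Proposition \ref{thm:lattice}) and that it is closed under dominated pointwise limits (Theorem \ref{thm:dom}).

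For part (i), I would fix an arbitrary $\varphi\in\mathcal{L}$ and exploit two lattice identities that hold pointwise in $\overline{\mathbb{R}}^X$: the associativity $\varphi\wedge(x\wedge y) = (\varphi\wedge x)\wedge y$, and the distributivity $\varphi\wedge(x\vee y) = (\varphi\wedge x)\vee(\varphi\wedge y)$. Assuming $x$ and $y$ are Daniell measurable, $\varphi\wedge x\in\mathcal{L}$ by hypothesis, and then $(\varphi\wedge x)\wedge y\in\mathcal{L}$ again by the Daniell measurability of $y$; this handles $x\wedge y$. For $x\vee y$ the distributive identity expresses $\varphi\wedge(x\vee y)$ as the join of two members of $\mathcal{L}$, and since $\mathcal{L}$ is a vector lattice this join also lies in $\mathcal{L}$.

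For part (ii), fix again $\varphi\in\mathcal{L}$. Since the binary operation $\wedge$ is continuous in each argument on $\overline{\mathbb{R}}$, pointwise convergence $x_n\to x$ gives $\varphi\wedge x_n\to\varphi\wedge x$ pointwise, and each $\varphi\wedge x_n$ lies in $\mathcal{L}$ by assumption. To apply Theorem \ref{thm:dom} I need a dominating function in $\mathcal{L}$. The natural candidate is $|\varphi|$, which belongs to $\mathcal{L}$ by Proposition \ref{thm:lattice}. Using $x_n\geq 0$ one checks pointwise that $-|\varphi|\leq \varphi\wedge 0\leq \varphi\wedge x_n\leq \varphi\leq |\varphi|$, so $|\varphi\wedge x_n|\leq |\varphi|$ for every $n$. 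Dominated convergence then yields $\varphi\wedge x\in\mathcal{L}$; and since $x$ is the pointwise limit of nonnegative functions it is itself nonnegative, so $x$ satisfies the definition of Daniell measurability.

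The only subtle point is producing the correct two-sided bound for $\varphi\wedge x_n$ by $|\varphi|$ (the lower bound requires the nonnegativity of $x_n$, which is exactly the hypothesis of part (ii)); beyond that, both parts are essentially bookkeeping once the lattice identities and the Dominated Convergence Theorem are in hand.
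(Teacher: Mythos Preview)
Your proposal is correct and follows essentially the same approach as the paper: part (ii) is identical (dominate $\varphi\wedge x_n$ by $|\varphi|$ and invoke Theorem~\ref{thm:dom}), and part (i) uses the same distributive identity for $x\vee y$. The only cosmetic difference is in the $x\wedge y$ case, where you use associativity $(\varphi\wedge x)\wedge y$ and then the measurability of $y$, while the paper writes $(x\wedge\varphi)\wedge(y\wedge\varphi)$ and appeals to closure of $\mathcal{L}$ under $\wedge$; both routes are equally short and valid.
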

\begin{proof}
    To prove (i), let $x$ and $y$ be nonnegative and measurable and let $\varphi \in \mathcal{L}$. 
    Then because $\mathcal{L}$ is closed under $\land$ with both 
    $x \land \varphi$ and $y \land \varphi$ in $\mathcal{L}$, we have
    \[
        (x\land y)\land \varphi  =(x \land \varphi)\land (y\land \varphi) \in \mathcal{L}.
    \]
    Similarly, because $\mathcal{L}$ is closed under $\lor$ with both
    $y \land \varphi$ and $x \land \varphi$ in $\mathcal{L}$, we have
    \[
        (x \lor y)\land \varphi= (x \land \varphi) \lor (y \land \varphi) \in \mathcal{L}.
    \]
    
    To prove (ii), let $(x_n)$ be a sequence of nonnegative Daniell measurable functions. 
    If $\varphi \in \mathcal{L}$, 
    then since $0 \leq x_n$ for $n \in \mathbb{N}$, we have $| x_n \land \varphi| \leq |\varphi|$. 
    We also know that $|\varphi|\in \mathcal{L}$ by Theorem \ref{thm:lattice}. 
    Hence by Theorem \ref{thm:dom}, we have that 
    $$x \land \varphi= \lim_{n \rightarrow \infty} x_n \land \varphi \in \mathcal{L}.$$ 
    As this holds for all $\varphi \in \mathcal{L}$ we conclude that $x$ is measurable. \qedhere

\end{proof}

\begin{lemma}\label{lemma:measurable_lemma}
    Let $x \in {\overline{\mathbb{R}}}^X$ be nonnegative.
    We have $\varphi \land x \in \mathcal{L}$ 
    for each $\varphi \in T_0$ if and only if $x$ is a 
    Daniell measurable function.
\end{lemma}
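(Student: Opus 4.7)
The forward implication is immediate because $T_0 \subseteq \mathcal{L}$: Daniell measurability of $x$ asserts $\varphi \wedge x \in \mathcal{L}$ for all $\varphi \in \mathcal{L}$, which specialises to $\varphi \in T_0$. All of the work lies in the converse, and my plan is to push the hypothesis from $T_0$ up to $\mathcal{L}$ by means of the approximation characterisation in Theorem \ref{thm:approximation}.

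Given $\varphi \in \mathcal{L}$ and $\varepsilon > 0$, I would first pick $x_\varepsilon \in T_0$ with $\overline{I}(|\varphi - x_\varepsilon|) < \varepsilon / 2$ using Theorem \ref{thm:approximation}. Then I would invoke the pointwise lattice inequality
\[
|a \wedge c - b \wedge c| \leq |a - b|,
\]
valid for $a,b,c \in \overline{\mathbb{R}}$ under the conventions of Remark \ref{remark:ambig}, to deduce $|(\varphi \wedge x) - (x_\varepsilon \wedge x)| \leq |\varphi - x_\varepsilon|$; monotonicity of $\overline{I}$ (Lemma \ref{BigLemma}(iii)) then transports this bound to $\overline{I}(|(\varphi \wedge x) - (x_\varepsilon \wedge x)|) < \varepsilon/2$. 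The hypothesis puts $x_\varepsilon \wedge x$ in $\mathcal{L}$, so a second use of Theorem \ref{thm:approximation} produces $y_\varepsilon \in T_0$ with $\overline{I}(|x_\varepsilon \wedge x - y_\varepsilon|) < \varepsilon/2$. Subadditivity (Lemma \ref{BigLemma}(ii)) combined with the pointwise triangle inequality packages these two estimates into $\overline{I}(|\varphi \wedge x - y_\varepsilon|) < \varepsilon$, and a final application of Theorem \ref{thm:approximation} concludes $\varphi \wedge x \in \mathcal{L}$.

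The main obstacle I anticipate is establishing the pointwise lattice inequality $|a \wedge c - b \wedge c| \leq |a - b|$ in the extended-real setting: for finite values it is an easy four-case argument, but in $\overline{\mathbb{R}}^X$ one must reconcile the statement with the ambiguity convention of Remark \ref{remark:ambig} when $\pm \infty$ appears (for instance, if $x(t) = \infty$ the inequality reduces to $|\varphi(t) - x_\varepsilon(t)| \leq |\varphi(t) - x_\varepsilon(t)|$, while the more delicate cases occur when $\varphi(t)$ or $x_\varepsilon(t)$ is itself infinite). After this is dispensed with, the three-step ``approximate, swap, approximate'' argument is entirely routine, and the proof proceeds without needing any convergence machinery beyond Theorem \ref{thm:approximation} and Lemma \ref{BigLemma}.
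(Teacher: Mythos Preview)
Your argument is correct and takes a genuinely different route from the paper. The paper's proof climbs the tower $T_0 \subset T_1 \subset T_2 \subset \mathcal{L}$ in three separate stages: first pushing the hypothesis from $T_0$ to integrable elements of $T_1$ via the Monotone Convergence Theorem, then from $T_1$ to $T_2$ via Proposition~\ref{prop:T2} and Dominated Convergence, and finally from $T_2$ to all of $\mathcal{L}$ by invoking the Characterisation Theorem~\ref{thm:char} and a null-function comparison. Your approach short-circuits this entire ladder by using Theorem~\ref{thm:approximation} together with the Lipschitz-type lattice inequality $|a \wedge c - b \wedge c| \le |a - b|$, so that only Lemma~\ref{BigLemma} and Theorem~\ref{thm:approximation} are needed and none of the convergence theorems enter. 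The trade-off is that the paper's argument exposes more clearly how measurability propagates through each layer of the Daniell extension, while yours is shorter and conceptually cleaner once the pointwise inequality is established; the extended-real checks you flag are indeed the only place care is required, and they go through under the convention of Remark~\ref{remark:ambig} (in particular the triangle inequality survives because $|u-w|=\infty$ forces at least one of $|u-v|,|v-w|$ to be infinite).
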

\begin{proof}
    Assume $\varphi \land x \in \mathcal{L}$ for each $\varphi \in T_0$.
    We claim that $\varphi \land x \in \mathcal{L}$ for each $\varphi \in T_1$ with $I_1(\varphi)<\infty$.
    If $\varphi \in T_1$ with $(\varphi_n) \subset T_0$ such that $(\varphi_n)\nearrow \varphi$
    with $I_1(\varphi)<\infty$, then $\varphi_n \land x \in \mathcal{L}$ for 
    each $n \in \mathbb{N}$ by assumption. Consequently, 
    since 
    $$\lim_{n \rightarrow \infty} \int \varphi_n \land x \leq \lim_{n \rightarrow \infty} \int \varphi_n \leq \int \varphi <\infty,$$
    we have that $\varphi \land x = \lim_{n \rightarrow \infty} \varphi_n \land x \in \mathcal{L}$
     by Theorem \ref{thm:Monotone}. Thus our hypothesis implies 
    $\varphi \land x \in \mathcal{L}$ for each integrable $\varphi \in T_1$.

    We claim that $\varphi \land x \in \mathcal{L}$ for each $\varphi \in T_2$.
    Let $\varphi \in T_2$ with $(\varphi_n)\subset T_1$ such that $(\varphi_n) \searrow \varphi$.
    By Proposition \ref{prop:T2} we may choose the sequence $(\varphi_n)$ so that
    $I_1(\varphi_n)<\infty$, and thus by our first claim, that
     $\varphi_n \land x \in \mathcal{L}$ for each $n \in \mathbb{N}$.
    Recall that $T_2 \subset \mathcal{L}$ and so $\varphi \in \mathcal{L}$.
    Since $x$ is nonnegative, we have 
    \begin{align}
        \varphi \land 0 \leq \varphi \land x \leq \varphi_n \land x \leq \varphi_1 \land x, \label{eqn:bounded}
    \end{align}
    for each $n \in \mathbb{N}$. We know by Proposition \ref{thm:lattice} 
    that $\mathcal{L}$ is an extended vector lattice,
    and thus that $\varphi \land 0 \in \mathcal{L}$. We also have $\varphi_1 \land x\in \mathcal{L}$ by our previous claim,
    since $\varphi_1 \in T_1$.
    Hence from \eqref{eqn:bounded}, the sequence $(\varphi_n \land x)$ is bounded by functions in $\mathcal{L}$, 
    and using Theorem \ref{thm:dom} we conclude that 
    $\varphi \land x = \lim_{n \rightarrow \infty} \varphi_n \land x \in \mathcal{L}$.
    Thus $\varphi \land x \in \mathcal{L}$ for all $\varphi \in T_2$.
    
    If $\varphi \in \mathcal{L}$, then by Theorem \ref{thm:char} we have that 
    $\varphi= \psi - z$ for some $\psi \in T_2$ and nonnegative null function $z \in \mathcal{L}$. 
    We then have $\psi \land x \in \mathcal{L}$ and
    \begin{align*}
        0 &\leq \psi \land x - \varphi \land x \\
        &= \frac{1}{2}(\psi + x - |\psi -x|) -\frac{1}{2}(\varphi+x - |\varphi - x|) \\
        &= \frac{1}{2}(\psi -\varphi + |\varphi - x| - |\psi - x|)\\
        &\leq \frac{1}{2}(\psi -\varphi + |\psi - \varphi|)\\
        &=\psi - \varphi = z.
    \end{align*}
    Since $z$ is a null function, it follows from Proposition \ref{prop:null} that 
    $\psi \land x - \varphi \land x$ is also null.
    Hence $\varphi \land x$ differs from $\psi \land x$ by a null function and 
    we conclude, using Theorem \ref{thm:char}, that $\varphi \land x \in \mathcal{L}$. 
    This holds for arbitrary $\varphi \in \mathcal{L}$, therefore we conclude that $x$ 
    is a Daniell measurable function.

    The converse follows from the fact that $T_0 \subset \mathcal{L}$. 
    Hence if $\varphi \land x \in \mathcal{L}$ for each $\varphi \in \mathcal{L}$, then 
    $\varphi \land x \in \mathcal{L}$ for each $\varphi \in T_0$.
\end{proof}

We refer the readers to the Appendix for the definitions of $\sigma$-ring (Definition \ref{def:sigma_ring}) 
and $\sigma$-algebra (Definition \ref{def:sigma_alg}).
\begin{theorem} \label{thm:algebra}
    Let $\mathcal{A}$ be the class of all Daniell  measurable sets. 
    Then $\mathcal{A}$ is a $\sigma$-ring. Furthermore if $\chi_X$ is a Daniell measurable function, then 
    $\mathcal{A}$ is a $\sigma$-algebra. 
\end{theorem}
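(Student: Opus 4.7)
The plan is to verify the three defining properties of a $\sigma$-ring---containing the empty set, closure under countable unions, and closure under relative complements---by translating set operations on elements of $\mathcal{A}$ into lattice and limit operations on their characteristic functions, and then to upgrade to a $\sigma$-algebra under the extra hypothesis that $\chi_X$ is Daniell measurable.

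First, I would note that $\chi_\emptyset \equiv 0$ lies in $T_0 \subset \mathcal{L}$, and for any $\varphi \in \mathcal{L}$ we have $\varphi \wedge 0 \in \mathcal{L}$ because $\mathcal{L}$ is an extended vector lattice by Proposition \ref{thm:lattice}; hence $\emptyset \in \mathcal{A}$. For $A, B \in \mathcal{A}$, the nonnegative functions $\chi_A$ and $\chi_B$ are Daniell measurable, so Lemma \ref{lemma:cap_lemma}(i) gives that $\chi_{A \cup B} = \chi_A \vee \chi_B$ and $\chi_{A \cap B} = \chi_A \wedge \chi_B$ are Daniell measurable as well, placing $A \cup B$ and $A \cap B$ in $\mathcal{A}$. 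To pass from finite to countable unions, given $(A_n) \subset \mathcal{A}$, iterating the finite case puts each partial union $B_N := A_1 \cup \cdots \cup A_N$ in $\mathcal{A}$; then $\chi_{B_N}$ is a sequence of nonnegative Daniell measurable functions increasing pointwise to $\chi_{\bigcup_n A_n}$, so Lemma \ref{lemma:cap_lemma}(ii) yields $\bigcup_n A_n \in \mathcal{A}$.

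The main obstacle is closure under relative complements, because Daniell measurability is defined only for nonnegative functions, whereas the natural identity for $\chi_{A \setminus B}$ involves subtraction. To show $A \setminus B \in \mathcal{A}$ for $A, B \in \mathcal{A}$, I would fix $\varphi \in \mathcal{L}$, set $\varphi^{+} := \varphi \vee 0$ and $\varphi^{-} := (-\varphi) \vee 0$ (both in $\mathcal{L}$ and nonnegative), and verify the two pointwise identities
\[
\varphi \wedge \chi_{A \setminus B} = \varphi^{+} \wedge \chi_{A \setminus B} - \varphi^{-},
\]
\[
\varphi^{+} \wedge \chi_{A \setminus B} = \varphi^{+} \wedge \chi_A - \varphi^{+} \wedge \chi_{A \cap B}.
\]
The first holds because $\chi_{A \setminus B} \geq 0$, so $\varphi \wedge \chi_{A \setminus B}$ agrees with $\varphi$ wherever $\varphi < 0$ and with $\varphi^{+} \wedge \chi_{A \setminus B}$ wherever $\varphi \geq 0$. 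The second is checked by a short case analysis on whether a point lies in $A \setminus B$, $A \cap B$, or $A^{c}$, exploiting that $\chi_A$ and $\chi_{A \cap B}$ are $\{0,1\}$-valued. Since $A$ and $A \cap B$ are already in $\mathcal{A}$ from the previous paragraph, both terms on the right of the second identity lie in $\mathcal{L}$, hence so does $\varphi^{+} \wedge \chi_{A \setminus B}$; subtracting $\varphi^{-} \in \mathcal{L}$ then places $\varphi \wedge \chi_{A \setminus B}$ in $\mathcal{L}$, so $A \setminus B \in \mathcal{A}$.

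Finally, if $\chi_X$ is Daniell measurable then $X \in \mathcal{A}$, and combining this with closure under differences gives $A^{c} = X \setminus A \in \mathcal{A}$ for every $A \in \mathcal{A}$, completing the proof that $\mathcal{A}$ is a $\sigma$-algebra. I expect the second subtraction identity to be the delicate point: the naive relation $f \wedge (h - g) = f \wedge h - f \wedge g$ is false in general, so the argument genuinely relies on $\chi_A$ and $\chi_{A \cap B}$ being indicator functions.
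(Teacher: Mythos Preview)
Your proof is correct and follows essentially the same route as the paper. The only cosmetic difference is in the handling of relative complements: the paper establishes the single identity
\[
\varphi\land\chi_{A \setminus B}
=\varphi \land \chi_A -\varphi\land (\chi_A \land \chi_B)+\varphi\land 0
\]
directly by case analysis, whereas you first split $\varphi=\varphi^{+}-\varphi^{-}$ and then prove the analogous identity for $\varphi^{+}$; since $\varphi\land 0=-\varphi^{-}$ and $\varphi\land\chi_A-\varphi\land\chi_{A\cap B}=\varphi^{+}\land\chi_A-\varphi^{+}\land\chi_{A\cap B}$ pointwise, the two presentations are equivalent.
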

\begin{proof}
    To show that $\mathcal{A}$ is a $\sigma$-ring, it suffices to show that, $\mathcal{A}$
    is closed under 
    \begin{enumerate}[(i)]
        \item relative difference, and
        \item countable union.
    \end{enumerate}
    
    To prove (i), let $A,B \in \mathcal{A}$.  
    \medskip
    \begin{claim}
    For all $\varphi \in \mathcal{L}$,
    \begin{align}
        \varphi\land\chi_{A \setminus B}
        =\varphi \land \chi_A -\varphi\land (\chi_A \land \chi_B)+\varphi\land 0 \label{eqn:and}
    \end{align}    
    \end{claim}
    \medskip
    \begin{claimproof}
        Let $t \in X$. If $\varphi(t) \leq 0$, since characteristic functions are all 
        nonnegative, then we have
        $$(\varphi\land \chi_{A \setminus B} )(t) = \varphi(t)
            =(\varphi \land \chi_A)(t) -(\varphi\land (\chi_A \land \chi_B))(t)+(\varphi\land 0)(t).$$
        If $\varphi(t) \geq 0$ and $t \in A\setminus B$, then 
            $$(\varphi\land \chi_{A \setminus B} )(t) = \min \lbrace \varphi(t), 1 \rbrace,$$
        and 
            $$(\varphi \land \chi_A)(t) -(\varphi\land (\chi_A \land \chi_B))(t)+(\varphi\land 0)(t)
                =\min \lbrace \varphi(t), 1 \rbrace - 0 + 0=\min \lbrace \varphi(t), 1 \rbrace.$$
        If $\varphi(t) \geq 0$ and $t \in A\cap B$, then 
            $$(\varphi\land \chi_{A \setminus B} )(t) = 0,$$
        and 
            $$(\varphi \land \chi_A)(t) -(\varphi\land (\chi_A \land \chi_B))(t)+(\varphi\land 0)(t)
                =\min \lbrace \varphi(t), 1 \rbrace - \min \lbrace \varphi(t), 1 \rbrace + 0=0.$$
        Finally, if $\varphi(t) \geq 0$ and $t \notin A$, then
            $$(\varphi\land \chi_{A \setminus B} )(t) = 0,$$
        and 
            $$(\varphi \land \chi_A)(t) -(\varphi\land (\chi_A \land \chi_B))(t)+(\varphi\land 0)(t)
                =0.$$
        Thus, for all $t \in X$, we have that \eqref{eqn:and} holds. This proves the claim. 
    \end{claimproof}
    \medskip

    Thus, if $\varphi \in \mathcal{L}$ then each term on the right side of \eqref{eqn:and} is in $\mathcal{L}$. 
    We conclude that $\varphi\land\chi_{A \setminus B}$ is a Daniell measurable function 
    and $A \setminus B$ is a Daniell measurable set.\\

    To show (ii), let $(A_n)$ be a sequence of sets in $\mathcal{A}$. Then for each $n \in \mathbb{N}$ we have that 
    $$\chi_{\bigcup_{i=1}^n A_i}=\chi_{A_1}\lor \chi_{A_2}\lor \cdots \lor \chi_{A_n}$$
    and this is Daniell measurable by Lemma \ref{lemma:cap_lemma}.
    Furthermore $ (\chi_{\bigcup_{i=1}^n A_i}) \nearrow \chi_{\bigcup_{i=1}^\infty A_i}$ and 
    again using 
    Lemma \ref{lemma:cap_lemma} we conclude that $\chi_{\bigcup_{i=1}^\infty A_i}$ is a
     Daniell measurable function.
     Thus $\bigcup_{i=1}^\infty A_i$ is a Daniell measurable set.

    These properties show that $\mathcal{A}$ is a $\sigma$-ring. 

     If $\chi_X$ is a Daniell measurable function, then
    $X$ is a Daniell measurable set. Hence $X \in \mathcal{A}$ and $\mathcal{A}$ is a 
    $\sigma$-algebra.
\end{proof}

\begin{theorem}\label{thm:measure}
    Assume that $\chi_X$ is Daniell measurable.
    Let $\mu : \mathcal{A} \rightarrow \mathbb{R}\cup \lbrace \infty\rbrace$ be defined 
    for $E \in \mathcal{A}$ by
    \[ 
    \mu(E):= 
        \begin{cases}
            \int \chi_E &\text{ if $\chi_E$ is integrable}\\
            \infty &\text{ otherwise. }\\
        \end{cases}  
    \]
    Then $\mu$ is a measure.
\end{theorem}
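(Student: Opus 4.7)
The plan is to verify the three defining properties of a measure on the $\sigma$-algebra $\mathcal{A}$: (a) $\mu(\emptyset)=0$, (b) nonnegativity, and (c) countable additivity. The first two are essentially immediate from what has already been developed, and I would dispose of them first: $\chi_\emptyset = 0 \in T_0$ with $\int 0 = 0$ by linearity (D1), and whenever $\chi_E$ is integrable, $\chi_E \geq 0$ combined with property (D3) of $\int$ (verified in Remark \ref{remark:extend_d_int}) yields $\mu(E)\geq 0$; when $\chi_E$ is not integrable, $\mu(E)=\infty\geq 0$ by definition.

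For countable additivity, let $(E_n)$ be a pairwise disjoint sequence in $\mathcal{A}$ with union $E$. Disjointness gives $\chi_{\bigcup_{i=1}^n E_i} = \sum_{i=1}^n \chi_{E_i}$, so the partial sums $s_n := \sum_{i=1}^n \chi_{E_i}$ satisfy $s_n \nearrow \chi_E$ pointwise. My plan is a case analysis on whether every $\chi_{E_i}$ is integrable. If so, Proposition \ref{thm:lattice} gives $s_n \in \mathcal{L}$ with $\int s_n = \sum_{i=1}^n \mu(E_i)$, and when the series $\sum_{i=1}^\infty \mu(E_i)$ is finite, Theorem \ref{thm:Monotone} applies directly: $\chi_E \in \mathcal{L}$ and $\mu(E) = \int \chi_E = \lim_n \int s_n = \sum_{i=1}^\infty \mu(E_i)$.

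The remaining situation is the divergent one: either some $\chi_{E_k}$ fails to be integrable, or every $\chi_{E_i}$ is integrable but $\sum_{i=1}^\infty \mu(E_i) = \infty$. In both subcases I must show $\chi_E \notin \mathcal{L}$ so that $\mu(E)=\infty$ matches $\sum_i \mu(E_i)=\infty$. In the ``all integrable but divergent'' sub-case, assuming $\chi_E \in \mathcal{L}$ would combine $s_n \leq \chi_E$ with Lemma \ref{BigLemma}(iii) to give $\sum_{i=1}^n \mu(E_i)=\int s_n \leq \int \chi_E < \infty$ for all $n$, contradicting divergence. The first sub-case is the main subtlety: assuming toward contradiction that $\chi_E \in \mathcal{L}$, the Daniell measurability of $\chi_{E_k}$ applied to the test function $\chi_E$ forces $\chi_{E_k}\land \chi_E \in \mathcal{L}$; but $E_k \subset E$ makes this truncation equal to $\chi_{E_k}$ itself, contradicting the assumed non-integrability of $\chi_{E_k}$. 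This step, which converts ``measurability of a subset'' into ``integrability after truncation by the whole,'' is where the interplay between Daniell measurability and the extended integral does the essential work, and I expect it to be the key point requiring care in the write-up.
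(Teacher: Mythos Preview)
Your proposal is correct and follows essentially the same route as the paper: verify $\mu(\emptyset)=0$ and nonnegativity directly, then use disjointness to write $\chi_E$ as an increasing limit of finite sums and apply Theorem~\ref{thm:Monotone} in the convergent case, with a contradiction argument via monotonicity for the divergent case. In fact your case analysis is slightly more complete than the paper's: the paper only treats the situation where every $\mu(E_n)$ is finite, whereas you also dispose of the sub-case in which some $\chi_{E_k}$ is not integrable, and your use of Daniell measurability of $\chi_{E_k}$ against the test function $\chi_E$ to force $\chi_{E_k}=\chi_{E_k}\land\chi_E\in\mathcal{L}$ is exactly the right device there.
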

\begin{proof}
    We first show that $\mu$ is nonnegative and that $\mu (\emptyset)=0$.
    If $E \in \mathcal{A}$, then we have that $0 \leq \chi_E$. 
    If $\mu (E)$ is finite, then since $0 \leq \chi_E$ we have
    $$0 \leq \int \chi_E=\mu (E).$$
    Furthermore, $\chi_\emptyset=0$ and so
    Since $\int$ is linear by Proposition \ref{thm:lattice}, we have that 
    $$\mu (\emptyset)= \int \chi_\emptyset = \int 0=0.$$

    We now show that $\mu$ is countably additive. 
    First assume that $(E_n)$ is a sequence of disjoint sets 
    in $\mathcal{A}$ such that all of $\mu (E_n)$ are finite.
    Since these sets are disjoint we have 
    $\chi_{\bigcup_{n=1}^\infty E_n}= \sum_{n=1}^\infty \chi_{E_n}$.
    If $ \sum_{n=1}^\infty \int \chi_{E_n}<\infty$, then by Theorem \ref{thm:Monotone}, we have 
    $$\chi_{\bigcup_{n=1}^\infty E_n}=\sum_{n=1}^\infty \chi_{E_n} \in \mathcal{L}, $$
    and 
    $$\mu \left(\bigcup_{n=1}^\infty E_n \right)= \int \chi_{\bigcup_{n=1}^\infty E_n}
    =\int \sum_{n=1}^\infty \chi_{E_n} = 
    \sum_{n=1}^\infty \int \chi_{E_n}=\sum_{n=1}^\infty \mu (E_n).$$ 
    If $ \sum_{n=1}^\infty \int \chi_{E_n}=\infty$, 
    then $\mu \left(\bigcup_{n=1}^\infty E_n\right) = \infty$. For if this is not the case, 
    then $\chi_{\bigcup_{n=1}^\infty E_n}\in \mathcal{L}$,
    and for each $N \in \mathbb{N}$ we have 
    $$\sum_{n=1}^N \int \chi_{E_n} = \int \chi_{\bigcup_{n=1}^N E_n}
    \leq \int \chi_{\bigcup_{n=1}^\infty E_n} <\infty,$$ 
    contradicting the assumption that $ \sum_{n=1}^\infty \int \chi_{E_n}=\infty$. 
    We conclude that $\mu$ is a measure.
\end{proof}

We defined a measure on our set $X$ and a Daniell Integral $\int $ on 
the class $\mathcal{L}\subset {\overline{\mathbb{R}}}^X$ of $I$-integrable functions. 
The next theorem 
connects the two and finalises the connection between the Daniell Integral and a measure. 

\begin{theorem}[Daniell-Stone]\label{thm:DS}
    Let $T_0$ be an extended vector lattice of functions on a set $X$ with the
     property that $\chi_X \land \varphi \in T_0$ whenever $\varphi \in T_0$. Let
     $I$ be an $I$-integral on $T_0$. Then there is a $\sigma$-algebra 
    $\mathcal{A}$ on $X$ and a measure $\mu$ on $\mathcal{A}$ such that
    each function on $X$ is integrable with respect to $\mu$ if and only 
    if it is $I$-integrable. 
    Furthermore if $\varphi$ is both $I$- and $\mu$-integrable then 
    \[
        \int \varphi =\int \varphi \, d\mu.
    \]
\end{theorem}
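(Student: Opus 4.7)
My plan is to string together the structural results already built up in the paper (the Characterisation Theorem, the Monotone Convergence Theorem, Lemma \ref{lemma:measurable_lemma}, and Theorems \ref{thm:algebra} and \ref{thm:measure}) and then check that the measure $\mu$ reproduces the Daniell integral on every $I$-integrable function, and conversely that every $\mu$-integrable function is $I$-integrable.

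First I would verify the standing hypothesis needed to invoke Theorems \ref{thm:algebra} and \ref{thm:measure}, namely that $\chi_X$ itself is Daniell measurable. The assumption $\chi_X \land \varphi \in T_0 \subset \mathcal{L}$ for every $\varphi \in T_0$ is precisely the condition of Lemma \ref{lemma:measurable_lemma} applied to $x=\chi_X$, so $\chi_X$ is Daniell measurable. Consequently the class $\mathcal{A}$ of Daniell measurable sets is a $\sigma$-algebra by Theorem \ref{thm:algebra}, and $\mu$ as defined in Theorem \ref{thm:measure} is a measure on $\mathcal{A}$.

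Next I would work out the equivalence of the two notions of integrability, starting with the direction \emph{$I$-integrable implies $\mu$-integrable}. The key observation is that every $\varphi \in T_0$ is a Daniell measurable function: by Lemma \ref{lemma:measurable_lemma} applied to $\varphi \lor 0$ and $(-\varphi)\lor 0$ separately (both are nonnegative and lie in the lattice $\mathcal{L}$, and any meet with an element of $T_0$ stays in $T_0$), and then noting $\varphi=\varphi\lor 0 - (-\varphi)\lor 0$. Passing through the definition of $T_1$ (increasing limits) and $T_2$ (decreasing limits from $T_1$) using Lemma \ref{lemma:cap_lemma}(ii), and then using the Characterisation Theorem \ref{thm:char}, one sees that every $x\in\mathcal{L}$ is measurable in the $\sigma$-algebra sense. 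To obtain equality of integrals I would first establish it for $\varphi\in T_0$ by showing the Daniell integral of a nonnegative simple function $\sum c_i\chi_{E_i}$ equals $\sum c_i\mu(E_i)$ (this uses linearity of $\int$ and the definition of $\mu$), then approximating a nonnegative $\varphi\in T_0$ from below by such simple functions and invoking Theorem \ref{thm:Monotone} together with the measure-theoretic Monotone Convergence Theorem. The extension to $T_1$, $T_2$ and finally to all of $\mathcal{L}$ proceeds by successive applications of Theorem \ref{thm:Monotone} (for $T_1$ and $T_2$) and Theorem \ref{thm:char} plus the fact that null functions have zero measure-theoretic integral (because $\int|z|=0$ forces $\mu(\{|z|>1/n\})=0$ for every $n$ by Chebyshev).

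For the converse direction \emph{$\mu$-integrable implies $I$-integrable}, I would again reduce to nonnegative simple functions $\sum c_i\chi_{E_i}$ with $\mu(E_i)<\infty$. Each $\chi_{E_i}$ is by definition of $\mu$ in $\mathcal{L}$ with Daniell integral $\mu(E_i)$, so the simple function is $I$-integrable with matching integral. A general nonnegative $\mu$-measurable $f$ with $\int f\,d\mu<\infty$ is the increasing pointwise limit of such simple functions; Theorem \ref{thm:Monotone} then puts $f$ in $\mathcal{L}$ and gives equality of integrals. Splitting $f=f^+ - f^-$ completes the argument. The main obstacle I foresee is the first direction: one must carefully propagate measurability and integral-agreement through the tower $T_0\subset T_1\subset T_2\subset \mathcal{L}$, and in particular handle the measure-zero correction coming from the Characterisation Theorem so that the identity $\int x = \int x\,d\mu$ survives passage to the quotient by null functions.
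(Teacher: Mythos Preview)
Your proposal has a genuine gap in the forward direction. You claim that propagating Daniell measurability through the tower $T_0\subset T_1\subset T_2\subset\mathcal{L}$ shows that every $x\in\mathcal{L}$ is ``measurable in the $\sigma$-algebra sense''. But note first that your tower argument is vacuous: since $\mathcal{L}$ is itself a lattice (Proposition~\ref{thm:lattice}), every nonnegative $x\in\mathcal{L}$ is trivially a Daniell measurable function, so nothing is gained by passing through $T_0$, $T_1$, $T_2$. More importantly, ``$x$ is a Daniell measurable function'' (i.e.\ $x\wedge\psi\in\mathcal{L}$ for all $\psi\in\mathcal{L}$) is not the same as ``$x$ is $\mathcal{A}$-measurable'' (i.e.\ the level sets $\{x>\alpha\}$ lie in $\mathcal{A}$), and your proposal never bridges this gap. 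Without $\mathcal{A}$-measurability you cannot ``approximate a nonnegative $\varphi\in T_0$ from below by such simple functions'', so the equality $\int\varphi=\int\varphi\,d\mu$ is left unproved even in your base case $\varphi\in T_0$.

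The paper closes exactly this gap, and this is where the Stone condition does its real work. For nonnegative $x\in\mathcal{L}$ and $\alpha>0$ it sets $y=\tfrac{1}{\alpha}x-\bigl(\tfrac{1}{\alpha}x\bigr)\wedge\chi_X$, observes that $y\in\mathcal{L}$ (using that $\chi_X$ is Daniell measurable) and that $y>0$ precisely on $E_\alpha=\{x>\alpha\}$; then $\varphi_n:=\chi_X\wedge(ny)$ is an increasing sequence of Daniell measurable functions with limit $\chi_{E_\alpha}$, whence $E_\alpha\in\mathcal{A}$ by Lemma~\ref{lemma:cap_lemma}. With this in hand the paper builds the dyadic approximation $2^{-n}\sum_{k=1}^{2^{2n}}\chi_{E_{k,n}}\nearrow x$ directly for arbitrary nonnegative $x\in\mathcal{L}$ and applies the two monotone convergence theorems simultaneously---no tower through $T_0,T_1,T_2$ and no appeal to the Characterisation Theorem is needed. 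Your converse direction is essentially the paper's argument and is fine.
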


\begin{proof}
    Our assumption that $\chi_X \land \varphi \in T_0$ for each 
    $\varphi \in T_0$ together with 
    Lemma \ref{lemma:measurable_lemma} allows us to conclude that 
    $\chi_X$ is a Daniell measurable function.
    Theorems \ref{thm:algebra} and \ref{thm:measure} guarantee the 
    existence of the $\sigma$-algebra 
    $\mathcal{A}$ and measure $\mu$ on $X$.\\

    \begin{claim}
        If $x\in \mathcal{L}$, then $x$ is $\mu$-measurable.\\
    \end{claim}
    \medskip
    \begin{claimproof} 
    Since every function in $\mathcal{L}$
    is the difference of two nonnegative functions, it is sufficient to 
    consider nonnegative functions in $\mathcal{L}$.
    Without loss of generality let $x\in \mathcal{L}$ with $x \geq 0$ 
    and let $$E_\alpha:= \lbrace t \in X: x(t) > \alpha \rbrace.$$ 
    If $\alpha \leq 0$ then $E_\alpha = X \in \mathcal{A}$. So assume $\alpha >0$. 
    Let $$y= \frac{1}{\alpha}x-\left(\frac{1}{\alpha}x\right)\land \chi_X.$$ 
    Since $\chi_X$ is Daniell measurable,
     $\left(\frac{1}{\alpha}x\right)\land \chi_X \in \mathcal{L}$
    and so $y$ is also in $\mathcal{L}$. 
    Let $t \in X$. If $t \in E_\alpha$ then $ \alpha<x(t)$. This implies that $\chi_X(t)<\frac{1}{\alpha}x(t)$ and 
    thus $0<y(t)$. If $t \notin E_\alpha$, then $x(t) \leq \alpha$ and $\frac{1}{\alpha}x(t)\leq \chi_X(t)$. 
    Hence $y(t)=0$. In other words $0<y(t)$ if and only if $t \in E_\alpha$.
    For $n \in \mathbb{N}$, define 
    \begin{align}
        \varphi_n:=\chi_X\land ny \label{eqn:phi_incr_claim}
    \end{align}
    (see Figure \ref{fig:incr} for an illustration). Then $(\varphi_n)$ is a sequence of Daniell measurable
    functions such that $(\varphi_n)\nearrow \chi_{E_\alpha}$. 
    Thus by Lemma \ref{lemma:cap_lemma} $\chi_{E_\alpha}$ is Daniell measurable
    and $E_\alpha \in \mathcal{A}$. This proves the claim.
    \end{claimproof}
     \medskip
        
    \begin{figure}[h]
        \includegraphics[width=\linewidth]{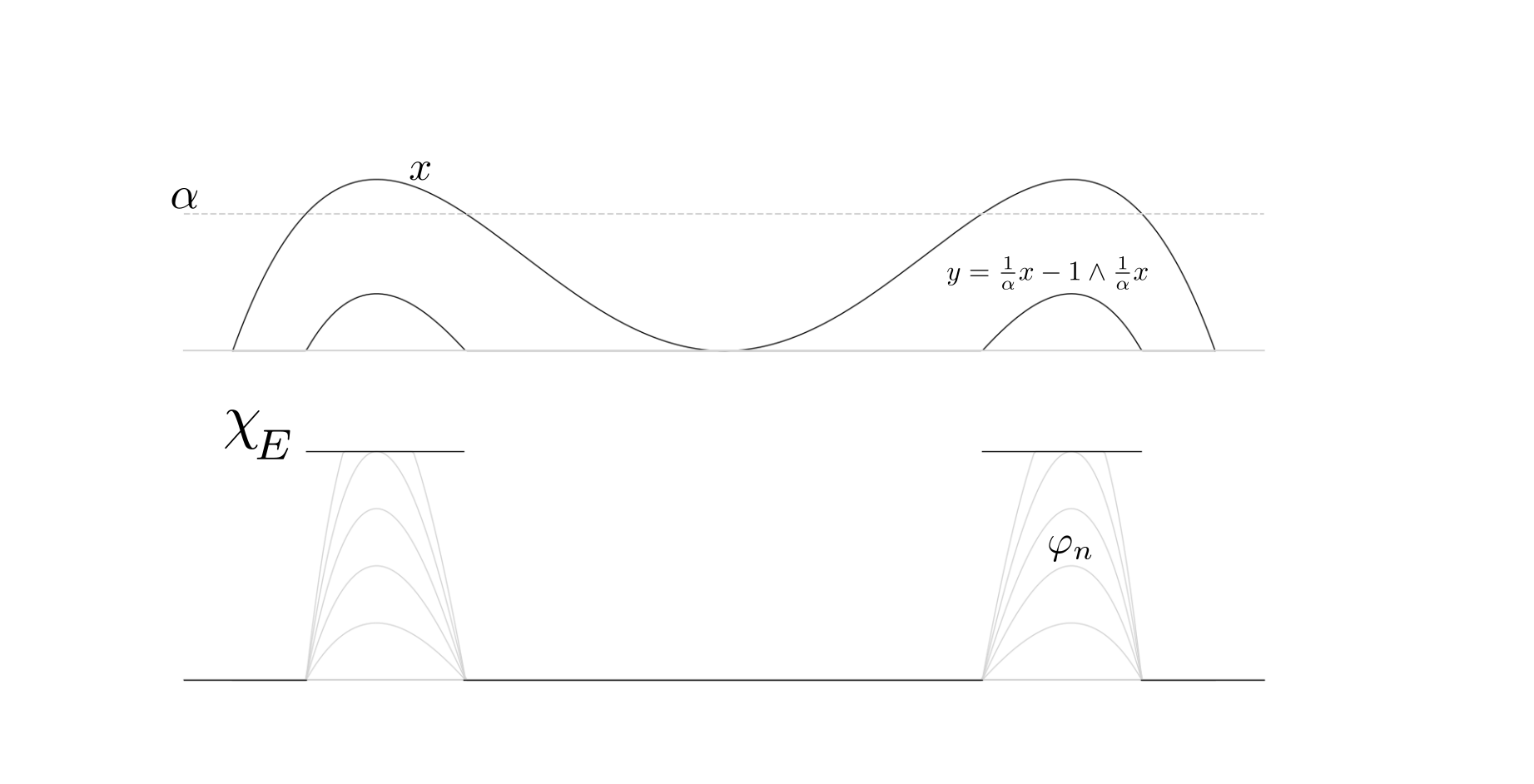}
        \caption{An illustration of $\varphi_n$ in \eqref{eqn:phi_incr_claim}.}\label{fig:incr}
    \end{figure}

    Let $x$ be nonnegative in $\mathcal{L}$. We aim to show that $x$ 
    is integrable with respect to $\mu$.
    For $k,n \in \mathbb{N}$, define 
    \begin{align}
        E_{k,n}:=\lbrace t \in X: x(t)> k2^{-n} \rbrace. \label{eqn:ekn}
    \end{align}
   From the proven claim above we know $E_{k,n}\in \mathcal{A}$ 
   and $\chi_{E_{k,n}}$ is Daniell measurable. 
    Furthermore, $$\chi_{E_{k,n}}=\chi_{E_{k,n}} \land (2^n k^{-1}x) \in \mathcal{L}.$$ 
    For $n \in \mathbb{N}$, define the sequence $(\varphi_n)\subset \mathcal{L}$ by
    \begin{align}
       \varphi_n := 2^{-n} \sum_{k=1}^{2^{2n}} \chi_{E_{k,n}}. \label{eqn:phin}
    \end{align}

    \begin{claim}
        The sequence $(\varphi_n)$ is increasing. 
    \end{claim}
    \medskip
    \begin{claimproof}
    Fix $n \in \mathbb{N}$.
    Note that 
    \begin{align}
        \varphi_{n+1}=  2^{-(n+1)} \sum_{k=1}^{2^{2(n+1)}} \chi_{E_{k,n+1}}
        = 2^{-n} \sum_{j=1}^{2^{2(n+1)-1}}\frac{1}{2}\left( \chi_{E_{2j,n+1}} 
            +\chi_{E_{2j-1,n+1}}\right). \label{eqn:inc_phi}
    \end{align}
    Also, if $t \in E_{j,n}$, then 
    $$x(t)>j2^{-n}=(2j)2^{-(n+1)},$$ 
    and so $t \in \chi_{E_{2j,n+1}}$.
    Similarly, if $t \in E_{j,n}$, then 
    $$x(t)>j2^{-n}>(2j-1)2^{-(n+1)},$$ 
    and so $t \in \chi_{E_{2j-1,n+1}}$. 
    Thus $E_{j,n}\subset E_{2j,n+1} $ and $ E_{j,n} \subset E_{2j-1,n+1}$, 
    and we have 
    $$\chi_{E_{j,n}} \leq \chi_{E_{2j,n+1}} 
        \text{ and } \chi_{E_{j,n}} \leq \chi_{E_{2j-1,n+1}}. $$
    Using these inequalities together with \eqref{eqn:inc_phi} we get
    \[
        2^{-n} \sum_{j=1}^{2^{2(n+1)-1}}\frac{1}{2}\left( \chi_{E_{2j,n+1}} +\chi_{E_{2j-1,n+1}}\right)
        \geq 2^{-n} \sum_{j=1}^{2^{2(n+1)-1}}\frac{1}{2}\left( \chi_{E_{j,n}} +\chi_{E_{j,n}}\right)
        \geq 2^{-n} \sum_{j=1}^{2^{2n}} \chi_{E_{j,n}} = \varphi_n.
    \]
    and we conclude $\varphi_{n+1} \geq \varphi_n$. Since this holds for all $n \in \mathbb{N}$, $(\varphi_n)$ is an increasing sequence.
    This proves the claim that $(\varphi_n)$ is increasing.
\end{claimproof}
\medskip

    \begin{claim}
        The sequence $(\varphi_n)$ has pointwise limit $x$.
    \end{claim}
    \medskip
    \begin{claimproof}
    Fix $t \in X$. There are three cases.

    {\bf Case 1:} If $x(t)=0$ then for each 
    $k,n \in \mathbb{N}$ we have $\chi_{E_{k,n}}(t)=0$. 
    Thus $\varphi_n(t)=0$ and $x(t)=\lim_{n\rightarrow \infty}\varphi_n(t)$. 
    
    {\bf Case 2:} If $0<x(t)$ and $x(t)$ is finite, then there is an $N \in \mathbb{N}$ such that 
    for $n>N$ we have $2^n > x(t)$. Fix $n >N$ and let
    \[
        K:=\max\lbrace k\in \mathbb{N}: x(t) > 2^{-n}k \rbrace.
    \]
    Note that if $k > K$, then $2^{-n}k\geq x(t)$ and so $t \notin E_{k,n}$.
    Hence $\chi_{E_{k,n}}=0$ for $k>K$.
    We thus have
     \[
        x(t) - \varphi_n(t)=x(t)- 2^{-n} \sum_{k=1}^{2^{2n}} \chi_{E_{k,n}}(t)
            =x(t)- 2^{-n} \sum_{k=1}^K \chi_{E_{k,n}}(t).
    \]
    However, we know that $t \in E_{k,n}$ when $k<K$. Therefore, 
    for all $k<K$ we have $\chi_{E_{k,n}}(t)=1$. We also know that 
    $x(t) < 2^{-n}(K+1)$. Hence
    \[
        0\leq x(t) - \varphi_n(t)=x(t)- 2^{-n} \sum_{k=1}^K \chi_{E_{k,n}}(t)= 
            x(t)-2^{-n}K<2^{-n}(K+1)-2^{-n}K=2^{-n}
    \]
    and we conclude $x(t)=\lim_{n\rightarrow \infty}\varphi_n(t)$. 
    
    {\bf Case 3:} If $x(t) = \infty$ then $\varphi_n(t) = 2^{-n} \sum_{k=1}^{2^{2n}} \chi_{E_{k,n}}(t) 
    = 2^{-n}2^{2n}=2^{n}$ and 
    $x(t)=\lim_{n\rightarrow \infty}\varphi_n(t)$. 
    Thus we have $x(t)=\lim_{n\rightarrow \infty}\varphi_n(t)$ for 
    each $t\in X$ and we conclude $(\varphi_n)\nearrow x$. 
    This proves the claim that $(\varphi_n)$ has pointwise limit $x$.
\end{claimproof} 
\medskip 

We have thus shown that $(\varphi_n) \nearrow x$.
Finally, we aim to show that 
$$ \int x = \int x \,d\mu.$$
    By Theorem \ref{thm:Monotone} we have 
    \begin{align}
        \int x = \lim_{n \rightarrow \infty} \int \varphi_n
            =\lim_{n \rightarrow \infty} \int 2^{-n} \sum_{k=1}^{2^{2n}} \chi_{E_{k,n}}
            = \lim_{n \rightarrow \infty} 2^{-n} \sum_{k=1}^{2^{2n}} \int \chi_{E_{k,n}}
            = \lim_{n \rightarrow \infty} 2^{-n} \sum_{k=1}^{2^{2n}} \mu(E_{k,n}). \label{eqn:backward_1}
    \end{align}
    The measure-theoretic definition of the integral for simple functions gives us 
    \begin{align}
        \lim_{n \rightarrow \infty} 2^{-n} \sum_{k=1}^{2^{2n}} \mu(E_{k,n})=
        \lim_{n \rightarrow \infty} \int \varphi_n \, d\mu. \label{eqn:backward_2}
    \end{align}
    Applying the classical measure-theoretic Monotone Convergence Theorem leaves 
    \[
        \lim_{n \rightarrow \infty} \int \varphi_n \,d\mu=\int x \, d\mu
    \]
    and we conclude that 
    \[
    \int x= \int x \, d\mu.
    \]

    Thus $x$ is integrable with respect to $\mu$ and the two 
    definitions of the integral coincide. \\

    We now show the converse of the theorem for nonnegative functions.
    Assume $x \in  {\overline{\mathbb{R}}}^X$ with $x \geq 0$ 
    and (measure-theoretically) integrable with respect to $\mu$. 
    We aim to show that $x$ is $I$-integrable, and that the two integrals coincide.
    As in \eqref{eqn:ekn} for $k,n \in \mathbb{N}$, define 
    \begin{align}
        E_{k,n}:=\lbrace t \in X: x(t)> k2^{-n} \rbrace, \label{eqn:ekn}
    \end{align}
    and as in \eqref{eqn:phin}, define the sequence $(\varphi_n)$ by
    \begin{align}
       \varphi_n := 2^{-n} \sum_{k=1}^{2^{2n}} \chi_{E_{k,n}}. \label{eqn:phin}
    \end{align}
    Note that, if $t \in E_{k,n}$, then we have $k 2^{-n}<x(t)$ and thus 
    $1<\frac{2^n}{k} x(t)$. Consequently, 
    $$\chi_{E_{k,n}} \leq \frac{2^n}{k} \chi_{E_{k,n}} x \leq \frac{2^n}{k} x.$$
    Since the (measure-theoretic) integral with respect to
     $\mu$ of $x$ is finite, we have for $n,k \in \mathbb{N}$
     \[
        \mu (E_{k,n})= \int \chi_{E_{k,n}} \, d\mu \leq \frac{2^n}{k}\int x \, d\mu< \infty.
     \]
    and $\chi_{E_{k,n}} \in \mathcal{L}$. 
    Thus for all $n \in \mathbb{N}$ we have that $\varphi_n$,
     being a linear combination of $\chi_{E_{k,n}}$'s,
    is also in $\mathcal{L}$. As was done previously, we have that 
    $(\varphi_n)\nearrow x$. Going through the previous argument backward, using 
    the classical Monotone Convergence Theorem, we have
    \[
        \int x \, d\mu = \lim_{n \rightarrow \infty} \int \varphi_n \, d\mu.
    \]
    In the same way as in \eqref{eqn:backward_1} and \eqref{eqn:backward_2} we get 
    \[
        \lim_{n \rightarrow \infty} \int \varphi_n \, d\mu 
            = \lim_{n \rightarrow \infty} \int \varphi_n,
    \]
    and applying Theorem \ref{thm:Monotone} we have 
    \[
        \int x = \lim_{n \rightarrow \infty} \int \varphi_n 
        =  \lim_{n \rightarrow \infty} \int \varphi_n \, d\mu   = \int x \, d\mu.
    \]
    We conclude that $x$ is $I$-integrable and that the integrals coincide.

    If $x \in \mathcal{L}$ is an arbitrary $I$-integrable function, not necessarily 
    nonnegative, then we can write $x$ as a difference of two 
    nonnegative functions 
    \[
        x= x\lor 0 - (-x)\lor 0  
    \]
    which are $I$-integrable by Proposition \ref{thm:lattice} and see that $x$ is also 
    integrable with respect to $\mu$. Similarly if $x$ is an arbitrary $\mu$-integrable
    function, not necessarily nonnegative, then $x$ can be written as a 
    difference between two nonnegative $\mu$-integrable functions and so $x$ is $I$-integrable.
    This completes the proof. 
\end{proof}    

As a consequence of Theorem \ref{thm:DS}, we obtain Carath\'eodory's Extension Theorem.
We refer the readers to the Appendix for the definitions of 
pre-measure (Definition \ref{def:pre_measure}), and the notion of a $\sigma$-algebra
generated by a ring (Definition \ref{def:sigma_gen}). 
Before we prove Carath\'eodory's Extension Theorem, we first consider a lemma.
\begin{lemma}\label{lemma:simple}
    Let $\mathcal{R}$ be a ring of subsets of $X$.
    Let $\mu : \mathcal{R} \rightarrow \mathbb{R}\cup \lbrace \infty \rbrace$ be a pre-measure.
    We define  
    \[
        T := \text{span}\set{\chi_R}{R \in \mathcal{R} \text{ and } \mu (R) < \infty}.
    \]
    Define $I: T \rightarrow \mathbb{R}$ as follows: 
    For each $ \sum_{i=1}^n c_i \chi_{R_i} \in T$ let 
    \[
        I\left(\sum_{i=1}^n c_i \chi_{R_i} \right) := \sum_{i=1}^n c_i \mu (R_i). 
    \]

    Then 
    \begin{enumerate}[(i)]
        \item $T$ is an extended vector lattice; and
        \item $I$ is an $I$-integral.
    \end{enumerate}
\end{lemma}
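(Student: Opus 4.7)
The plan is to reduce everything to a canonical disjoint representation. First I would prove the structural lemma that every $f \in T$ admits a representation $f = \sum_{i=1}^{m} d_i \chi_{S_i}$ in which the $S_i \in \mathcal{R}$ are pairwise disjoint and each has finite $\mu$-measure. This follows by taking the atoms of the finite algebra generated by the sets appearing in any given representation of $f$: since $\mathcal{R}$ is a ring it is closed under intersections and relative differences, so all atoms lie in $\mathcal{R}$, and each atom is contained in some $R_i$ of finite measure. The value of $f$ is constant on each atom, giving the disjoint form. Well-definedness of $I$ then reduces to a single claim: if $\sum c_i \chi_{R_i} = \sum c'_j \chi_{R'_j}$, both sides rewrite on the common atomic refinement with the same coefficients, so finite additivity of $\mu$ (which is part of being a pre-measure) forces $\sum c_i \mu(R_i) = \sum c'_j \mu(R'_j)$.

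For (i), closure under linear combinations is immediate from the definition of $T$ as a span, so only $\wedge$ and $\vee$ need care. Given $f,g \in T$, represent them jointly on the common atomic refinement $\{S_k\}$ as $f = \sum a_k \chi_{S_k}$ and $g = \sum b_k \chi_{S_k}$ (extending by zero where necessary); then $f \vee g = \sum (a_k \vee b_k)\chi_{S_k}$ and $f \wedge g = \sum (a_k \wedge b_k)\chi_{S_k}$, both of which are in $T$. Note that functions in $T$ are always finite-valued, so the $\infty - \infty$ subtlety of Remark \ref{remark:ambig} does not arise.

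For (ii), (D1) linearity follows directly from the definition and well-definedness, and (D3) follows from the disjoint representation: if $f = \sum d_i \chi_{S_i} \geq 0$ on disjoint $S_i$, then $d_i \geq 0$ on each nonempty $S_i$, so $I(f) = \sum d_i \mu(S_i) \geq 0$.

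The main obstacle is (D2), the continuity axiom. Let $(x_n) \subset T$ with $(x_n) \searrow 0$. Let $A \in \mathcal{R}$ be the (finite-measure) support of $x_1$, so every $x_n$ vanishes off $A$, and let $M := \max x_1 < \infty$. Fix $\varepsilon > 0$ and set
\[
B_n := \{t \in X : x_n(t) \geq \varepsilon\}.
\]
Using the disjoint representation of $x_n$, the set $B_n$ is a finite union of the underlying $\mathcal{R}$-sets of $x_n$, so $B_n \in \mathcal{R}$, $B_n \subset A$, and pointwise convergence $x_n(t) \to 0$ yields $B_n \searrow \emptyset$. The countable additivity of the pre-measure $\mu$, applied to the disjoint decomposition $B_1 = \bigsqcup_k (B_k \setminus B_{k+1})$, gives $\mu(B_n) \to 0$. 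Pointwise one has $x_n \leq M\chi_{B_n} + \varepsilon \chi_A$, hence by (D1) and (D3) established above,
\[
0 \leq I(x_n) \leq M\mu(B_n) + \varepsilon \mu(A).
\]
Letting $n \to \infty$ and then $\varepsilon \to 0$ yields $I(x_n) \to 0$, completing the verification of (D2) and the proof.
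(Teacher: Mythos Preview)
Your proof is correct and follows essentially the same route as the paper: both start from the disjoint-representation lemma (the paper states it as a claim and defers it to an appendix), handle (D1) and (D3) via that representation, and prove (D2) by showing that the superlevel sets $B_n$ shrink to $\emptyset$ and invoking countable additivity of the pre-measure to get $\mu(B_n)\to 0$, then bounding $x_n\le M\chi_{B_n}+\varepsilon\chi_A$. The only cosmetic differences are that for (i) you compute $f\vee g$ and $f\wedge g$ directly on a common atomic refinement whereas the paper reduces to closure under $|\cdot|$ via the identities of Remark~\ref{rem:absolute_val}, and in (D2) the paper uses threshold $\varepsilon/(2\mu(A))$ and chooses an explicit $N$ while you take the cleaner $\limsup$-then-$\varepsilon\to 0$ route; neither difference is substantive.
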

\begin{proof}
    Before we continue with the proof, we first make the following claim.
    \begin{claim}
        If $x= \sum_{i=1}^n c_i \chi_{R_i} \in T$, then there exists 
        a collection of pairwise disjoint sets 
        $\lbrace B_1, B_2, \cdots, B_n \rbrace$ in $\mathcal{R}$ and real numbers 
        $\lbrace b_1, b_2,\cdots, b_n \rbrace$ such that $x= \sum_{k=1}^l c_k \chi_{B_k}$.
    \end{claim}\\
    We provide the proof in the Appendix (Lemma \ref{lemma:disjoint_indicator}).

    We prove (i). By definition, $T$ is closed under linear combinations.
     We thus only need to show that $T$ is closed 
    under $\land$ and $\lor$. By the identities in Remark \ref{rem:absolute_val}, 
    it is sufficient to show that $T$ is closed under 
    absolute value. Let  $x= \sum_{i=1}^n c_i \chi_{R_i} \in T$. Using the above claim, we may assume
    without loss of generality that the sets $R_1, R_2, \cdots, R_n$ are pairwise disjoint. 
    We then get the following representation: for $t \in X$
    \[
        x(t):= 
        \begin{cases} 
            c_i &\text{ if } t \in R_i \\
            0 &\text{ else.}
        \end{cases}  
    \]
    From this it is seen that $|x|$ is given by 
    \[
        |x(t)|:= 
        \begin{cases}
            |c_i| &\text{ if } t \in R_i \\
            0 &\text{ else,}
        \end{cases}  
    \]
    for $t \in X$. Thus $x= \sum_{i=1}^n |c_i| \chi_{R_i} \in T$. Hence $T$ is a vector 
    lattice, and thus an extended vector lattice.\\

    We now show (ii). The proof that $I$ is well-defined and linear, 
    and hence satisfies (D1) from Definition \ref{def:I_Integral}, is a standard
    result in measure theory and we defer the proof to another source \cite[p. 78-79]{royden1988real}. 

    Assume that $x=\sum_{i=1}^n c_i \chi_{R_i} \in T_0$ such that $x \geq 0$. 
    Using the claim we may assume without loss of 
    generality that the sets $R_i$ for $i=1,2,\cdots, n$ are pairwise disjoint. 
    For each $i= 1,2, \cdots, n$ we pick a $t_i \in R_i$. Then we get that for $i=1,2,\cdots, n$
    \[
        c_i=   \sum_{i=1}^n c_i \chi_{R_i}(t_i)=x(t_i)\geq 0
    \]
    and so 
    \[
        I(x)=I\left( \sum_{i=1}^n c_i \chi_{R_i}\right)  =\sum_{i=1}^n c_i \mu (R_i) \geq 0.
    \]
    Since this holds for all $x \in T$ with $x \geq 0$, we conclude that $I$ is satisfies (D3) from Definition
    \ref{def:I_Integral}.

    Recall that $T$ consists of linear combinations of characteristic function of 
    sets in $\mathcal{R}$.
    Assume $(x_n) \subset T$ is a sequence such that $(x_n) \searrow 0$ and fix $\varepsilon>0$. 
    Let $$A:= \set{t \in X}{x_1(t)>0}.$$
    Note that, since $A$ is the finite union of 
     the sets in that characteristic functions making up $x_1$, $A \in \mathcal{R}$.
    For each $n \in \mathbb{N}$, define 
    $A_n:= \set{t \in X}{x_n(t) > \frac{\varepsilon}{2 \mu (A)}}$. 
    Note once again that, for every $n \in \mathbb{N}$, 
    because $A_n$ is the finite union of 
    some of the sets in that characteristic functions making up $x_n$, we have
   that $A_n \in \mathcal{R}$. Since $(x_n)$ is a decreasing sequence,
    we have the inclusions 
    \[
        A \supset A_1 \supset A_2 \supset A_3 \supset \cdots. 
    \]
    For each $n \in \mathbb{N}$, define $B_n:= A_n \backslash A_{n+1}$. Let $N \in \mathbb{N}$ be arbitrary. We then have
    \[
        \bigcup_{n=N}^\infty B_n 
        =\bigcup_{n=N}^\infty A_n \backslash A_{n+1} = A_N \backslash \bigcap_{n=1}^\infty A_n.
    \]
    Since we assumed that $(x_n) \searrow 0$, we have for all $t \in X$, that 
    $\lim_{n \rightarrow \infty} x_n(t) =0$, and so $\bigcap_{n=1}^\infty A_n= \emptyset$. 
    Consequently, $A_N = \bigcup_{n=N}^\infty B_n$. Since the sets $B_n$ are disjoint, 
    \[
        \sum_{n=N}^\infty \mu (B_n) = \mu (A_N) \leq \mu (A) < \infty.   
    \]
    If $x_1=0$, then since $(x_n) \searrow 0$, we have that $x_n=0$ for all $n\in \mathbb{N}$ 
    and the result holds. Therefore, without loss of generality, we may assume that $x_1 \neq 0$ and 
    thus, that $\max \set{x_1(t)}{t \in X}>0$.
    Choose $N$ large enough so that 
    \begin{align}
        \mu (A_N) = \sum_{n=N}^\infty \mu (B_n) < 
            \frac{\varepsilon}{2 \max \set{x_1(t)}{t \in X}}. \label{eqn:mu_sum}
    \end{align}
    
    If $n > N$ and $x_n$ is given by $x_n = \sum_{k=1}^K c_k \chi_{R_k}$
     where all the $R_k$'s are disjoint
    (note that we are hiding the $R_k$'s and $c_k$'s dependence on $n$), then we may write $x_n$
    in the folllowing way: 
    \[
        x_n =  \sum_{k=1}^N c_k \chi_{R_k \backslash A_n} + \sum_{k=1}^N c_k \chi_{R_k \cap A_n}.
    \]
    Note that if $c_k > \frac{\varepsilon}{2 \mu (A)}$, then for all $t \in R_k$ we have $x_n(t) >\frac{\varepsilon}{2 \mu (A)}$.
    Hence if $c_k > \frac{\varepsilon}{2 \mu (A)}$, then $R_k \subset A_n$ and $R_k \backslash A_n = \emptyset$.
    We thus have 
    \begin{align*}
        I(x_n)= \sum_{k=1}^N c_k \mu(R_k \backslash A_n) + \sum_{k=1}^N c_k \mu(R_k \cap  A_n) 
         &\leq \frac{\varepsilon}{2 \mu (A)} \sum_{k=1}^N \mu(R_k \backslash A_n) + \left(\max_k c_k\right) \mu (A_n) \\ 
         & \leq \frac{\varepsilon}{2 \mu (A)} \mu(A) + \left(\max_k c_k\right) \mu (A_n) \\
         & \leq \frac{\varepsilon}{2} + \left(\max \set{x_n(t)}{t \in X} \right)\mu(A_n) \\
         & \leq \frac{\varepsilon}{2} + \left(\max \set{x_1(t)}{t \in X} \right)\mu(A_N).
    \end{align*}
    Thus, in light of \eqref{eqn:mu_sum}, we have 
    \[
        I(x_n) \leq \frac{\varepsilon}{2} + \left( \max \set{x_1(t)}{t \in X}\mu(A_N )\right) 
        < \frac{\varepsilon}{2} + \left( \max \set{x_1(t)}{t \in X} \right) \frac{\varepsilon}{2 \max \set{x_1(t)}{t \in X}} 
        =\varepsilon.
    \]
    Thus, we have shown that $\lim_{n \rightarrow \infty}I(x_n)=0$, proving (D2) of Definition \ref{def:I_Integral}. This completes the proof.
\end{proof}

\begin{theorem}(Carath\'eodory's Extension Theorem) \label{thm:caratheodory}
    Let $\mathcal{R}$ be a ring  of subsets of $X$.
    Let $\mu: \mathcal{R} \rightarrow \mathbb{R}\cup\lbrace \infty \rbrace$ be a pre-measure.
    Let $\sigma(\mathcal{R})$ be the $\sigma$-algebra generated by $\mathcal{R}$. 
    Then $\mu$ extends to a measure 
    $\mu' : \sigma(\mathcal{R})  \rightarrow \mathbb{R}\cup \lbrace \infty \rbrace$.
\end{theorem}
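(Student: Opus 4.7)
My plan is to apply the Daniell--Stone Theorem (Theorem~\ref{thm:DS}) to the $I$-integral produced by Lemma~\ref{lemma:simple} and then restrict the resulting measure to $\sigma(\mathcal{R})$. Let $T$ and $I$ be as in Lemma~\ref{lemma:simple}. To invoke Daniell--Stone, I first verify the hypothesis $\chi_X \land \varphi \in T$ for every $\varphi \in T$. Writing $\varphi = \sum_i c_i \chi_{R_i}$ with the $R_i \in \mathcal{R}$ pairwise disjoint (using the claim embedded in the proof of Lemma~\ref{lemma:simple}), a pointwise check gives $\chi_X \land \varphi = \sum_i \min(c_i,1)\,\chi_{R_i} \in T$. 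Daniell--Stone then furnishes a $\sigma$-algebra $\mathcal{A}$ on $X$ and a measure $\nu$ on $\mathcal{A}$; the plan is to set $\mu' := \nu|_{\sigma(\mathcal{R})}$ once $\sigma(\mathcal{R}) \subset \mathcal{A}$ and $\nu|_\mathcal{R} = \mu$ are established.

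To show $\sigma(\mathcal{R}) \subset \mathcal{A}$, I verify via Lemma~\ref{lemma:measurable_lemma} that each $R \in \mathcal{R}$ has Daniell measurable characteristic function, i.e., that $\chi_R \land \varphi \in \mathcal{L}$ for every $\varphi \in T$. Writing $\varphi = \sum_i c_i \chi_{R_i}$ in disjoint form, a pointwise calculation yields
\[
    \chi_R \land \varphi \;=\; \sum_i \min(c_i,1)\,\chi_{R\cap R_i} \;+\; \sum_i \min(c_i,0)\,\chi_{R_i\setminus R}.
\]
Since $\mathcal{R}$ is closed under intersection and set difference, and since $\mu(R\cap R_i),\,\mu(R_i\setminus R) \le \mu(R_i) < \infty$, the right-hand side lies in $T \subset \mathcal{L}$. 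Hence $R \in \mathcal{A}$, and since $\mathcal{A}$ is a $\sigma$-algebra by Theorem~\ref{thm:algebra}, $\sigma(\mathcal{R}) \subset \mathcal{A}$.

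For $\nu|_\mathcal{R} = \mu$, the case $\mu(R) < \infty$ is immediate: $\chi_R \in T \subset \mathcal{L}$ gives $\nu(R) = \int \chi_R = I(\chi_R) = \mu(R)$. The main obstacle is the case $\mu(R) = \infty$, where I must show $\chi_R \notin \mathcal{L}$ so that $\nu(R) = \infty$ by the convention of Theorem~\ref{thm:measure}. My strategy is to prove $\mu(R) \le \overline{I}(\chi_R)$ for every $R \in \mathcal{R}$. Fix $\varphi \in T_1$ with $\chi_R \le \varphi$, choose $(\varphi_n) \subset T$ with $\varphi_n \nearrow \varphi$, and (replacing $\varphi_n$ by $\varphi_n \lor 0$) assume $\varphi_n \ge 0$. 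For $\varepsilon \in (0,1)$, let $A_n := \{t : \varphi_n(t) \ge 1-\varepsilon\}$; this is a finite union of the disjoint constituent sets of $\varphi_n$, hence lies in $\mathcal{R}$, and the inequality $\varphi_n \ge (1-\varepsilon)\chi_{A_n}$ gives $(1-\varepsilon)\mu(A_n) \le I(\varphi_n)$. Since $\chi_R \le \lim_n \varphi_n$, every $t \in R$ eventually lies in $A_n$, so $R \cap A_n \nearrow R$ with $R \cap A_n \in \mathcal{R}$. Countable additivity of the pre-measure $\mu$ then yields $\mu(R) = \lim_n \mu(R \cap A_n) \le I_1(\varphi)/(1-\varepsilon)$; letting $\varepsilon \to 0$ and infimising over $\varphi$ gives $\mu(R) \le \overline{I}(\chi_R)$, as required. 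Defining $\mu' := \nu|_{\sigma(\mathcal{R})}$ then completes the proof.
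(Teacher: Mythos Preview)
Your proof is correct and follows essentially the same route as the paper: apply Lemma~\ref{lemma:simple}, verify the Stone condition $\chi_X\land\varphi\in T$, invoke Theorem~\ref{thm:DS}, and then check that each $R\in\mathcal{R}$ is Daniell measurable with $\nu(R)=\mu(R)$. The only notable variation is in the $\mu(R)=\infty$ case: the paper argues by contradiction (assuming $\chi_R\in\mathcal{L}$ and exhausting $R$ by finite-measure sets built from the supports of approximating simple functions), whereas you prove the direct inequality $\mu(R)\le\overline{I}(\chi_R)$ via the level sets $A_n=\{\varphi_n\ge 1-\varepsilon\}$; these are minor repackagings of the same idea.
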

\begin{proof}
     We want to apply Theorem \ref{thm:DS}. Let $T$ and $I: T \rightarrow \mathbb{R}$ be 
     as defined in Lemma \ref{lemma:simple}.
     By Lemma \ref{lemma:simple}, 
     $T$ is an extended vector lattice and
     $I$ is an $I$-integral. We thus only need to prove that 
     $\chi_X \land \varphi \in T$ for each $\varphi \in T$. 
     Note that if $\varphi = \sum_{i=1}^n c_i \chi_{R_i} \in T$ then
     \[
         \chi_X \land \varphi = \chi_X \land \left( \sum_{i=1}^n c_i \chi_{R_i} \right) 
            =\sum_{i=1} \min(c_i,1) \chi_{R_i} \in T. 
     \]
     Hence the condition for Theorem \ref{thm:DS} holds 
     and we conclude the existence of a $\sigma$-algebra
     $\mathcal{A}$ a measure $\mu'$ on $X$. We show 
     that $\mathcal{R} \subset \mathcal{A}$ and that 
     $\mu'=\mu$ on $\mathcal{R}$. 
     If it is the case that $R \in \mathcal{R}$ 
     with $\mu (R)<\infty$ we have 
     \[
        \mu' (R)= \int \chi_R = I(\chi_R)= \mu (R),
     \]
     and so $R \in \mathcal{A}$.

     If it is the case that $R \in \mathcal{R}$ such that $\mu (R) = \infty$, 
     then let $\varphi= \sum_{i=1}^n c_i \chi_{R_i} \in T$.
    Without loss of generality, by  Lemma \ref{lemma:disjoint_indicator}, we may assume that the 
    sets $R_i$ for $i=0,1,\dots, n$ are disjoint. We then have
     \begin{align}
        \varphi \land \chi_R = \left(\sum_{i=1}^n c_i \chi_{R_i} \right) \land \chi_R 
        = \sum_{c_i < 0} c_i \chi_{R_i} + \sum_{c_i \geq 0} c_i \chi_{R_i\cap R}  \in T. \label{eqn:T_land}
     \end{align}
     Since $\varphi \in T$ was arbitrary, we know that 
     $\varphi \land \chi_R \in T$ for each $\varphi \in T$
     and we conclude, using Lemma \ref{lemma:measurable_lemma} 
     that $\chi_R$ is a Daniell measurable function 
     and hence that $R \in \mathcal{A}$. 
     \medskip
     \begin{claim}
        We have $\mu'(R)= \mu(R)= \infty$.
     \end{claim}
     \medskip 
     \begin{claimproof}
        Suppose, to the contrary, that $\mu'(R)< \infty$.
        Then, by the definition of the measure obtained from Theorem \ref{thm:DS},
        $$ \mu'(R) = \int \chi_R = \overline{I}(\chi_R),$$
        and there exists a $\varphi \in T_1$ such that $\chi_R \leq \varphi$ and
        $$\overline{I}(\chi_R)\leq I_1(\varphi) \leq \overline{I}(\chi_R) + 1 = \mu'(R) +1< \infty.$$
        Since $\varphi \in T_1$, there exists a sequence $(\varphi_n) \subset T$ such that 
        $(\varphi_n) \nearrow \varphi$. Since $\chi_R \leq \varphi$, we have that 
        $(\varphi_n \land \chi_R) \nearrow \chi_R$. Let $n \in \mathbb{N}$. In the same way as 
        was done in \eqref{eqn:T_land}, we write
        \[
            \varphi_n \land \chi_R = \left(\sum_{i=1}^n c_i \chi_{R_i} \right) \land \chi_R 
            = \sum_{c_i < 0} c_i \chi_{R_i} + \sum_{c_i \geq 0} c_i \chi_{R_i\cap R}, 
        \]
        with the $R_i$'s disjoint and conclude that $\varphi_n \land \chi_R \in T$ 
        (once again, we hide the dependence of the $R_i$'s and $c_i$'s on $n$).
        If we replace all the positive $c_i$'s with 1, and drop all the negative $c_i$'s, 
        we are left with 
        \[
            \sum_{c_i \geq 0} \chi_{R_i\cap R}=\chi_{\bigcup_{c_i \geq 0} R_i} \in T.
        \]
        Define $R_n:= \bigcup_{c_i \geq 0} R_i$.
        Note that, since $\chi_{R_n} \in T$,
        we have that $\mu (R_n)< \infty$. Also note that we have 
        $$\varphi_n \land \chi_R \leq \chi_{R_n} \leq \chi_R.$$
        Therefore, since $(\varphi_n \land \chi_R)$ is increasing, 
        so is $(\chi_{R_n})$ and we have that $(\chi_{R_n}) \nearrow \chi_R$. 
        Thus $R= \bigcup_{n=1}^\infty R_n$ with $R_1\subset R_2 \subset \cdots \subset R$,
        and we have 
        \[
            \mu(R) =\mu\left(\bigcup_{n=1}^\infty R_n\right) =\lim_{n \rightarrow \infty} \mu(R_n)
                = \lim_{n \rightarrow \infty} \int \chi_{R_n} \leq \int \chi_R = \mu'(R)< \infty.
        \]
        This contradicts our assumption that $\mu(R) = \infty$, and we conclude that $\mu(R)= \mu'(R)= \infty$.
        This proves the claim.
    \end{claimproof}
        \medskip 

        We conclude that 
        $\mathcal{R} \subset \mathcal{A}$ and hence that 
        $\sigma(\mathcal{R})\subset \mathcal{A}$ and that the measure 
     $\mu= \mu'$ on $\mathcal{R}$.
\end{proof}
    \subsection{Application to $X=\mathbb{R}$} 
    We aim to construct the Lebesgue integrable functions on $\mathbb{R}$. 
    Let $T_0$ be the continuous functions on $\mathbb{R}$ with compact support.
    For the purposes 
    of this section, denote the Riemann integral of a function $f$ 
    by $\int_{-\infty}^\infty f(x) \, dx$.
    From elementary analysis, we know the Riemann integral to be linear 
    and positive. Hence the Riemann integral satisfies (D1) and (D3) of 
    Definition \ref{def:D-Integral}. To show (D3) of Definition \ref{def:D-Integral}, 
    we state it as a proposition.
    \begin{proposition}
        If we have a sequence $(f_n)$ of continuous functions with compact support such that 
        $(f_n)\searrow 0$, then 
        $\lim_{n \rightarrow \infty} \int_{-\infty}^\infty f_n(x)\, dx =0$.
    \end{proposition}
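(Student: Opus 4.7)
The plan is to reduce to a compact interval and then invoke Dini's theorem to upgrade pointwise monotone convergence to uniform convergence, after which the conclusion is immediate from the basic estimate for the Riemann integral.

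First I would observe that since $(f_n) \searrow 0$ and each $f_n$ is nonnegative (being the limit, from above, of a decreasing sequence with limit $0$), the supports satisfy $\operatorname{supp}(f_n) \subset \operatorname{supp}(f_1)$ for all $n$. Since $f_1$ has compact support, there exists a closed bounded interval $[a,b]$ such that $\operatorname{supp}(f_n) \subset [a,b]$ for every $n \in \mathbb{N}$. Consequently
\[
\int_{-\infty}^\infty f_n(x)\,dx = \int_a^b f_n(x)\,dx,
\]
and the problem reduces to controlling $\int_a^b f_n(x)\,dx$.

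Next I would apply Dini's theorem: $(f_n)$ is a monotonically decreasing sequence of continuous functions on the compact set $[a,b]$ converging pointwise to the continuous function $0$. Dini's theorem then gives uniform convergence, i.e.\ $\sup_{x\in [a,b]} f_n(x) \to 0$ as $n \to \infty$. Given $\varepsilon>0$, choose $N$ so large that for all $n \geq N$ we have $0 \leq f_n(x) < \varepsilon/(b-a+1)$ for every $x \in [a,b]$. Then
\[
0 \leq \int_{-\infty}^\infty f_n(x)\,dx = \int_a^b f_n(x)\,dx \leq \frac{\varepsilon}{b-a+1}(b-a) < \varepsilon,
\]
which establishes $\lim_{n \to \infty} \int_{-\infty}^\infty f_n(x)\,dx = 0$.

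The main obstacle, if one wishes to avoid citing Dini's theorem as a black box, is proving the uniform convergence step; however, Dini's theorem is a standard result and the supports lying in a common compact set is exactly the setup where it applies. The only other subtlety is the reduction step, which requires noting that a pointwise decreasing sequence converging to $0$ must consist of nonnegative functions from some index onward (and in fact for all $n$, by the monotonicity), so that the support containment follows automatically.
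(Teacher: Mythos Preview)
Your proof is correct and follows essentially the same approach as the paper: reduce to a common compact interval $[a,b]$ using that $0\leq f_n\leq f_1$, then invoke Dini's theorem to obtain uniform convergence and bound $\int_a^b f_n$ by $(b-a)\sup_{[a,b]} f_n$. The only cosmetic difference is your use of $\varepsilon/(b-a+1)$ in place of the paper's direct $\sup$-bound, which is immaterial.
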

    \begin{proof}
        Let $(f_n)$ be a decreasing sequence of continuous functions with compact support such that 
        $(f_n)\searrow 0$.
        Then since $f_1$ has compact support, there exists 
        $a,b \in \mathbb{R}$ such that $f_1$ is zero outside of $[a,b]$. Since for all 
        $n \in \mathbb{N}$ we have that $0 \leq f_n \leq f_1$, 
        the functions $f_n$ are all zero outside 
        of $[a,b]$. 
        We thus have for all $n \in \mathbb{N}$,
        \[
            0 \leq \int_{-\infty}^\infty f_n(x)\, dx =\int_{a}^b f_n(x)\, dx 
                    \leq \int_{a}^b \left(\sup_{t \in [a,b]} f_n(t) \right)\, dx 
                    =(b-a)\sup_{t \in [a,b]} f_n(t).
        \]
        It thus suffices to show that 
        $\lim_{n \rightarrow \infty}\sup_{t \in [a,b]} f_n(t) =0$, i.e. 
        that $(f_n)$ converges to 0 uniformly. This follows from the standard result in analysis
        known as Dini's theorem which we state fully in the Appendix (Theorem \ref{thm:dini}).
    \end{proof}

    Consequently, the Riemann integral is a Daniell integral on $T_0$.
    From Theorem \ref{thm:algebra}
    and Theorem \ref{thm:measure}, we know there exists a $\sigma$-algebra $\mathcal{A}$ and 
    a measure $\mu$ on $\mathbb{R}$.
    We claim that $\mu$ is the Lebesgue measure.

    Denote the Lebesgue measure on $\mathbb{R}$ by $m$. 
    We first show that $\mathcal{A}$ contains the Borel $\sigma$-algebra. 
    It is sufficient to show that $\mathcal{A}$ 
    contains all the open intervals of finite length. 
    Let $J=(a,b)$ be an open interval. 
    We construct a sequence of continuous functions that increase toward $\chi_J$. 
    For each $n \in \mathbb{N}$, let $f_n : \mathbb{R} \rightarrow \mathbb{R}$ be 
    \begin{align}
        f_n(x):= 
        \begin{cases}
            \frac{2n}{b-a}(x-a) &\text{ if } x \in \left[a,a+\frac{b-a}{2n}\right]\\
            1 & \text{ if } x \in \left[a+\frac{b-a}{2n},b-\frac{b-a}{2n}\right]\\
            \frac{2n}{b-a}(b-x) &\text{ if } x \in \left[b-\frac{b-a}{2n}, b\right] \\
            0 &\text{ else. }
        \end{cases} \label{eqn:borel_incr}
    \end{align}

    \begin{figure}[h] 
        \includegraphics[width=\linewidth]{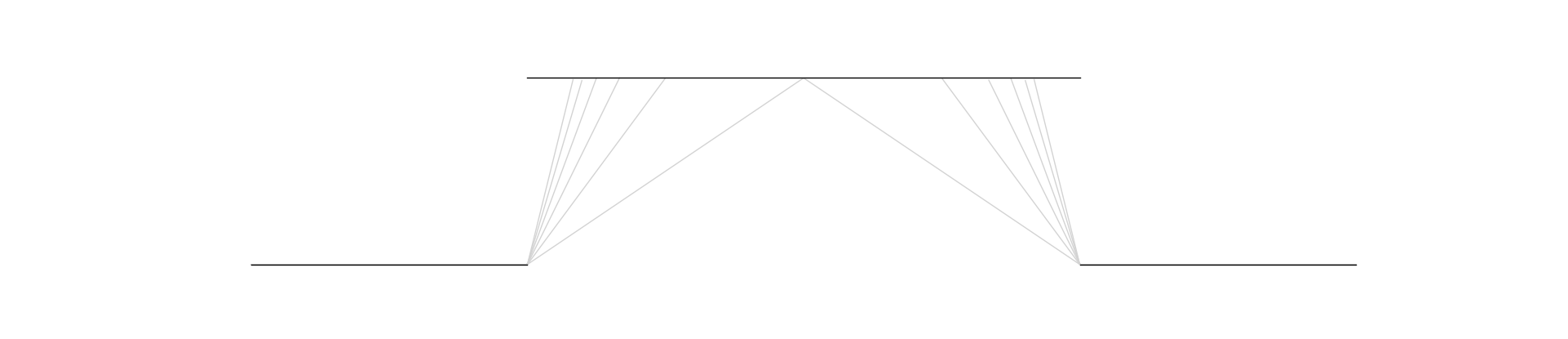}
        \caption{An illustration of $f_n$ in \eqref{eqn:borel_incr}.}\label{fig:borel_incr}
    \end{figure}

    Then $(f_n) \nearrow \chi_J$ (see Figure \ref{fig:borel_incr}) and $\chi_J \in T_1$. 
    Furthermore, if $I_1$ denotes the extension from 
    Definition \ref{definition:seqi} of the Riemann integral to $T_1$, we have 
    \begin{align*}
        I_1(\chi_J)&= \lim_{n \rightarrow \infty} \int_{-\infty}^\infty f_n(x) \, dx \\
            &= \lim_{n \rightarrow \infty} \left[ \int_a^{a+\frac{b-a}{2n}} \frac{2n}{b-a}(x-a) \, dx  
                + \int_{a+\frac{b-a}{2n}}^{b-\frac{b-a}{2n}} 1 \, dx 
                + \int_{b-\frac{b-a}{2n}}^b \frac{2n}{b-a}(b-x) \, dx\right]\\
            &= \lim_{n \rightarrow \infty} \left[\left(\frac{b-a}{4n}\right)+\left(b-a - 
                \frac{b-a}{n}\right)+\left(\frac{b-a}{4n}\right) \right] =b-a.
    \end{align*}
    Thus $\chi_J \in \mathcal{L}$ and is a Daniell measurable function.
    Hence $J \in \mathcal{A}$ and since this holds for an arbitrary open interval $J$, $\mathcal{A}$ 
    contains the Borel $\sigma$-algebra.
    Furthermore, the measure $\mu$ obtained from Theorem \ref{thm:measure} satisfies
    $$\mu(J)= \int \chi_J = b-a = m(J).$$ 
    Thus $\mu$ corresponds to $m$ when restricted to the open intervals, and hence the Borel $\sigma$-algebra.\\

    Using a result in measure theory (Theorem \ref{thm:appendix_completion} in the 
    Appendix), to show $\mathcal{A}$ contains the 
    Lebesgue $\sigma$-algebra and that $\mu=m$ on the Lebesgue measurable sets, it is
    sufficient to prove that $\mu$ is complete (for the definition of completeness, we refer the readers to 
    Definition \ref{def:complete_measure} in the Appendix).

    \begin{lemma}
        The measure $\mu$ given by Theorem \ref{thm:DS} is complete.
    \end{lemma}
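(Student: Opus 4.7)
The plan is to show that whenever $E \in \mathcal{A}$ satisfies $\mu(E) = 0$ and $F \subset E$, we have $F \in \mathcal{A}$ with $\mu(F) = 0$. This is the standard notion of completeness, and the argument should reduce almost entirely to invoking results already at hand, chiefly Proposition \ref{prop:null} and Proposition \ref{thm:lattice}.

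First I would observe that from the very definition of $\mu$ in Theorem \ref{thm:measure}, the condition $\mu(E) = 0$ forces $\chi_E$ to be $I$-integrable with $\int \chi_E = 0$ (since the only alternative is $\mu(E) = \infty$). As $\chi_E \geq 0$, this says exactly that $\chi_E$ is a null function in the Daniell sense. Given $F \subset E$, the pointwise inequality $0 \leq \chi_F \leq \chi_E$ yields $|\chi_F| \leq |\chi_E|$, so Proposition \ref{prop:null} applies to give $\chi_F \in \mathcal{L}$ with $\int |\chi_F| = 0$, i.e.\ $\chi_F$ is itself a null function.

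To deduce $F \in \mathcal{A}$ I must verify that $\chi_F$ is Daniell measurable, namely that $\varphi \land \chi_F \in \mathcal{L}$ for every $\varphi \in \mathcal{L}$. This step is immediate from Proposition \ref{thm:lattice}: since $\mathcal{L}$ is an extended vector lattice and both $\chi_F$ and $\varphi$ lie in $\mathcal{L}$, their meet $\chi_F \land \varphi$ lies in $\mathcal{L}$ as well. Hence $\chi_F$ is Daniell measurable, so $F \in \mathcal{A}$, and $\mu(F) = \int \chi_F = 0$.

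There is no serious obstacle here; the whole content of the lemma is really the observation that, because $\mathcal{L}$ already contains every function dominated in absolute value by a null function (Proposition \ref{prop:null}), subsets of $\mu$-null sets automatically pick up integrable characteristic functions, and Daniell measurability for those characteristic functions is then free from the lattice structure of $\mathcal{L}$.
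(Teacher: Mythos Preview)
Your proof is correct and follows essentially the same route as the paper's. The only cosmetic difference is that the paper argues directly with the upper and lower integrals, writing $0 \leq \underline{I}(\chi_B) \leq \overline{I}(\chi_B) \leq \overline{I}(\chi_A) = 0$, whereas you package that same computation by invoking Proposition~\ref{prop:null}; you are also more explicit than the paper in spelling out that $\chi_F \in \mathcal{L}$ gives Daniell measurability via the lattice structure of $\mathcal{L}$.
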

    \begin{proof}
        Let $A\in \mathcal{A}$ such that $\mu(A)=0$ and $B\subset A$. 
        Since $B \subset A$, we have $0 \leq \chi_B \leq \chi_A$ and
         $$0 \leq \underline{I}(\chi_B) \leq \overline{I}(\chi_B)\leq \overline{I}(\chi_A) = \int \chi_A=0.$$
         Hence the inequality is, in fact, an equality and 
        $\chi_B \in \mathcal{L}$ with $\mu(B)=\int \chi_B =0$. 
        Thus $B \in \mathcal{A}$ and $B$ is a null set. We conclude $\mu$ is complete.
    \end{proof}

    We conclude that $\mu$ coincides with the Lebesgue measure on $\mathbb{R}$ 
    and that the Daniell Integral constructed in this way is the Lebesgue integral. 


    \section{ $S$-Integral} \label{section:Sintegral}
    
    As in Section \ref{section:extension}, let $X$ be a set and
    let $T_0 \subset {\overline{\mathbb{R}}}^X$ be an extended vector lattice. 
    We discussed Daniell's method of extending an $I$-integral on $T_0$ to a 
    larger class. Daniell mentioned another integral in his paper \cite{daniell_1} 
    which generalises the notion of the Stieltjies Integral. 
    In this section, denote the set of all nonnegative functions in $T_0$ by
    $$T_0^+:= \set{x \in T_0}{ 0 \leq x}.$$
      
    \begin{definition} \label{def:S-integral}
        Let $S:T_0 \rightarrow \mathbb{R}$.
    We call $S$ an {\it  $S$-integral} if for all $x,y \in T_0$, 
    $\alpha, \beta \in \mathbb{R}$, and sequences $(x_n) \subset T_0$:
    \begin{enumerate}[(S1)]
        \item $S(\alpha x + \beta y)= \alpha S(x)+ \beta S(y)$;
        \item $(x_n) \searrow 0$ implies $\lim_{n \rightarrow \infty} S(x_n)=0$; and
        \item there exists a function $M: T_0^+ \rightarrow \mathbb{R}$ with the properties that 
        if $\varphi,\psi \in T_0^+$ such that $\varphi \leq \psi$ 
        then $M(\varphi)\leq M(\psi)$ and for all
        $x\in T_0$ we have $|S(x)| \leq M(|x|)$.
    \end{enumerate}
    \end{definition}
    
    \begin{proposition}
        An $I$-integral is an   $S$-integral.
    \end{proposition}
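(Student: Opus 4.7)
The plan is to take $M := I\vert_{T_0^+}$ and verify the three bullets in (S3). Properties (S1) and (S2) are identical to (D1) and (D2), so the only real content is to construct the dominating function $M$ from the data of an $I$-integral.

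First, I would note that if $x \in T_0$ then $|x| = x \vee (-x) \in T_0^+$, since $T_0$ is an extended vector lattice (closed under scalar multiplication by $-1$ and under $\vee$), and clearly $|x| \geq 0$. This guarantees that $M(|x|)$ is defined for every $x \in T_0$.

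Next, I would establish that $I$ is monotone on $T_0$: if $\varphi, \psi \in T_0$ with $\varphi \leq \psi$, then $\psi - \varphi \in T_0$ with $0 \leq \psi - \varphi$, so by (D3), $0 \leq I(\psi - \varphi)$, and by linearity (D1) this gives $I(\varphi) \leq I(\psi)$. In particular, the restriction $M = I\vert_{T_0^+}$ is monotone as required.

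Finally, for any $x \in T_0$ we have $-|x| \leq x \leq |x|$ pointwise, and all three functions lie in $T_0$. Applying the monotonicity just established, $I(-|x|) \leq I(x) \leq I(|x|)$, which by linearity is $-I(|x|) \leq I(x) \leq I(|x|)$, i.e.\ $|I(x)| \leq I(|x|) = M(|x|)$. This verifies (S3) and completes the argument. There is no real obstacle here; the only subtlety worth flagging is the verification that $|x| \in T_0$ so that the candidate $M(|x|)$ makes sense, which relies on the closure of the extended vector lattice $T_0$ under $\vee$ and negation.
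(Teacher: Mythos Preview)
Your proposal is correct and follows essentially the same approach as the paper: set $M = I$ on $T_0^+$, observe that (S1) and (S2) coincide with (D1) and (D2), and use $-|x|\le x\le |x|$ together with (D3) and linearity to obtain $|I(x)|\le I(|x|)$. You are simply more explicit than the paper in checking that $|x|\in T_0$ and in deriving monotonicity of $I$ from (D1) and (D3), but the argument is the same.
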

    \begin{proof}
        Any $I$-integral satisfies (S1) and (S2), as these are 
        (D1) and (D2) of Definition \ref{def:I_Integral}.
         For all $x\in T_0$ we have $-|x|\leq x \leq |x|$ and 
         thus by (D3) we have $-I(|x|) \leq I(x) \leq I(|x|)$. 
        Thus $|I(x)| \leq I(|x|)$ and $I$ takes the role of $M$ in (S3). 
    \end{proof}
    
    We now discuss the relationship between the $S$-integral and the $I$-integral. We aim to show that an    $S$-integral can be 
    expressed as the difference between two $I$-integrals. 

    \begin{definition} \label{def:pos_int}
        If $x \in T_0^+$ define 
        \[P(x):=\sup \set{S(\varphi)}{\varphi \in T_0 \text{ such that } 0\leq \varphi \leq x}.\]
        Recall that a function $x \in T_0$, can be expressed as $x= x\lor0 - (-x)\lor0$ 
        where both $ x\lor0$ and $(-x)\lor0$ are nonnegative. 
        Then for $x \in T_0\setminus T_0^+$ define 
        \[S^+(x):=P(x\lor 0)-P((-x)\lor 0).\]
    \end{definition}
    
    \begin{remark} 
        In Definition \ref{def:pos_int}, the supremum of the set
        $$\set{S(\varphi) \in \mathbb{R}}{\varphi \in T_0 \text{ such that } 0\leq \varphi \leq x}$$
         exists. 
        For if $x \in T_0^+$ and $\varphi \in T_0$ with 
        $0 \leq \varphi \leq x$ we have 
        $S(\varphi)\leq M(|\varphi|) = M(\varphi) \leq M(x)$. 
        Thus the set $ \set{S(\varphi)}{\varphi \in T_0 \text{ such that } 0\leq \varphi \leq x}$ is bounded above by $M(x)$ and the supremum exists. 
        Also note that $S^+$ corresponds with $P$ on the set $T_0^+$.
    \end{remark} 
    
    \begin{theorem}
        $S^+$ is an $I$-integral.
    \end{theorem}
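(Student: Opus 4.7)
The plan is to verify the three defining properties (D1), (D2), (D3) of an $I$-integral for $S^+$, after first establishing the key algebraic properties of $P$ on $T_0^+$.

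First I would work on $T_0^+$ and show that $P$ is monotone, positively homogeneous, and additive. Monotonicity is immediate from the definition. Positive homogeneity $P(cx)=cP(x)$ for $c\geq 0$ follows by reparametrising $\varphi\mapsto c\varphi$ in the supremum. The crucial property is additivity $P(x+y)=P(x)+P(y)$ on $T_0^+$: the inequality $\geq$ is easy because $0\leq \varphi_1\leq x$ and $0\leq \varphi_2\leq y$ give $0\leq \varphi_1+\varphi_2 \leq x+y$ and $S(\varphi_1+\varphi_2)=S(\varphi_1)+S(\varphi_2)$; the inequality $\leq$ uses the decomposition $\varphi = (\varphi\wedge x) + (\varphi - \varphi\wedge x)$, observing that $\varphi-\varphi\wedge x = (\varphi-x)\vee 0 \leq y$ when $\varphi\leq x+y$. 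Having this, I would prove the structural lemma that whenever $x=u-v$ with $u,v\in T_0^+$, one has $S^+(x)=P(u)-P(v)$; this follows because $u + (-x)\vee 0 = v + x\vee 0$ in $T_0^+$, so by additivity of $P$ the expression $P(u)-P(v)$ is independent of the decomposition.

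With this representation lemma, (D1) is straightforward: additivity of $S^+$ on $T_0$ reduces to additivity of $P$ using $x+y = (x\vee 0 + y\vee 0) - ((-x)\vee 0 + (-y)\vee 0)$, and homogeneity for $c\geq 0$ follows from positive homogeneity of $P$, while $S^+(-x) = -S^+(x)$ is immediate from the defining formula, covering negative scalars. Property (D3) is essentially by definition: if $x\geq 0$ then $x\in T_0^+$, $S^+(x)=P(x)$, and choosing $\varphi=0$ in the supremum gives $P(x)\geq S(0)=0$.

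The main obstacle is (D2): given $(x_n)\searrow 0$ in $T_0$, showing $S^+(x_n)=P(x_n)\to 0$. Here the difficulty is that the near-optimal $\varphi_n$ from the defining supremum need not form a decreasing sequence, so (S2) cannot be applied directly. My plan is the following trick: fix $\varepsilon>0$ and choose $\varphi_1\in T_0$ with $0\leq \varphi_1\leq x_1$ and $S(\varphi_1)\geq P(x_1)-\varepsilon$. For arbitrary $\psi\in T_0$ with $0\leq \psi \leq x_n$, use $\psi+\varphi_1=(\psi\vee\varphi_1)+(\psi\wedge\varphi_1)$ and $\psi\vee\varphi_1\leq x_1$ to deduce
\[
S(\psi) = S(\psi\vee \varphi_1) - S(\varphi_1) + S(\psi\wedge\varphi_1) \leq \varepsilon + S(\varphi_1\wedge x_n),
\]
since $\psi\wedge\varphi_1\leq \varphi_1\wedge x_n$. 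Taking the supremum over $\psi$ gives $P(x_n)\leq \varepsilon + S(\varphi_1\wedge x_n)$. Now $(\varphi_1\wedge x_n)\searrow 0$ is a genuinely decreasing sequence in $T_0$, so (S2) for $S$ yields $S(\varphi_1\wedge x_n)\to 0$, whence $\limsup_n P(x_n)\leq \varepsilon$. Letting $\varepsilon\to 0$ completes the proof of (D2) and hence the theorem.
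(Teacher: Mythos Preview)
Your treatment of (D1) and (D3), and of the additivity and representation lemmas for $P$, is correct and essentially identical to the paper's. The problem is your argument for (D2).

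In the displayed inequality
\[
S(\psi) = S(\psi\vee \varphi_1) - S(\varphi_1) + S(\psi\wedge\varphi_1) \leq \varepsilon + S(\varphi_1\wedge x_n),
\]
the step $S(\psi\wedge\varphi_1)\leq S(\varphi_1\wedge x_n)$, which you justify by ``since $\psi\wedge\varphi_1\leq \varphi_1\wedge x_n$'', tacitly assumes that $S$ is monotone. But $S$ is only an $S$-integral: property (S3) gives $|S(x)|\leq M(|x|)$ for some monotone $M$, and this does \emph{not} force $S$ itself to be order-preserving (think of a Stieltjes integral against a non-monotone integrator). If you replace $S$ by $P$ on the right, the inequality $S(\psi\wedge\varphi_1)\leq P(\varphi_1\wedge x_n)$ is legitimate, but then taking the supremum over $\psi$ yields only $P(x_n)\leq \varepsilon + P(\varphi_1\wedge x_n)$, which is circular: you are bounding $P$ along one null sequence by $P$ along another.

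The paper's route around this is to pick, for \emph{each} $n$, a near-optimiser $\varphi_n$ with $P(x_n)\leq S(\varphi_n)+\varepsilon/2^n$, set $\psi_n=\varphi_1\wedge\cdots\wedge\varphi_n$, and prove inductively that $P(x_n)\leq S(\psi_n)+(1-2^{-n})\varepsilon$ using the lattice identity $\varphi_{n+1}+\psi_n=(\varphi_{n+1}\vee\psi_n)+(\varphi_{n+1}\wedge\psi_n)$ together with $S(\varphi_{n+1}\vee\psi_n)\leq P(x_n)$. Each inequality used there involves $S$ on a term that is bounded above by some $x_k$ and below by $0$, so only the definition of $P$ (never monotonicity of $S$) is invoked. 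Since $(\psi_n)\searrow 0$, (S2) finishes the job. Your single-$\varphi_1$ trick is appealingly simple but cannot close without an unwarranted monotonicity assumption; you need the full inductive construction (or some equivalent device) to transfer the $P$-bound down to an $S$-value on a genuinely decreasing sequence.
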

    We break the proof into a sequence of lemmas. 
    \begin{lemma} \label{lemma:pos_additive}
        Let $x_1,x_2 \in T_0^+$. Then $P(x_1+x_2)=P(x_1)+P(x_2)$.
    \end{lemma}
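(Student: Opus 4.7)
The plan is to prove the equality $P(x_1+x_2)=P(x_1)+P(x_2)$ by two opposite inequalities, each following from a simple decomposition combined with linearity of $S$.

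For the inequality $P(x_1)+P(x_2)\le P(x_1+x_2)$, I would simply observe that if $\varphi_1,\varphi_2\in T_0$ satisfy $0\le \varphi_i\le x_i$, then $\varphi_1+\varphi_2\in T_0$ and $0\le \varphi_1+\varphi_2\le x_1+x_2$, so by linearity of $S$ (property (S1)),
\[
S(\varphi_1)+S(\varphi_2)=S(\varphi_1+\varphi_2)\le P(x_1+x_2).
\]
Taking the supremum independently over admissible $\varphi_1$ and $\varphi_2$ gives the desired bound.

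For the reverse inequality $P(x_1+x_2)\le P(x_1)+P(x_2)$, the key step is a lattice decomposition. Given any $\varphi\in T_0$ with $0\le \varphi\le x_1+x_2$, I would set
\[
\varphi_1:=\varphi\wedge x_1,\qquad \varphi_2:=\varphi-\varphi\wedge x_1.
\]
Both lie in $T_0$ since $T_0$ is closed under $\wedge$ and linear combinations. Clearly $0\le \varphi_1\le x_1$. For $\varphi_2$, pointwise analysis gives $\varphi_2\ge 0$, and at any point $t$ either $\varphi(t)\le x_1(t)$, in which case $\varphi_2(t)=0\le x_2(t)$, or $\varphi(t)>x_1(t)$, in which case $\varphi_2(t)=\varphi(t)-x_1(t)\le x_2(t)$ because $\varphi\le x_1+x_2$. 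Hence $0\le \varphi_2\le x_2$. Applying (S1) and the definition of $P$ yields $S(\varphi)=S(\varphi_1)+S(\varphi_2)\le P(x_1)+P(x_2)$, and taking the supremum over $\varphi$ completes the argument.

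The only step requiring any care is verifying that the decomposition $\varphi=\varphi_1+\varphi_2$ respects the constraints $\varphi_i\le x_i$; this is essentially a Riesz-type interpolation fact for lattices, and handling it pointwise (as above) is the cleanest route. Everything else is bookkeeping with the supremum and the linearity of $S$.
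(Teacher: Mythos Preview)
Your proof is correct and follows essentially the same approach as the paper: both directions of the inequality are handled identically, and your decomposition $\varphi_1=\varphi\wedge x_1$, $\varphi_2=\varphi-\varphi\wedge x_1$ coincides with the paper's $\varphi=\varphi\wedge x_1+(\varphi-x_1)\vee 0$, since $\varphi-\varphi\wedge x_1=(\varphi-x_1)\vee 0$ is a standard lattice identity. The only cosmetic difference is that you verify $0\le\varphi_2\le x_2$ pointwise, whereas the paper derives it from the identity $\varphi+x_1=\varphi\wedge x_1+\varphi\vee x_1$.
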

    \begin{proof}
    Let $x_1, x_2 \in T_0$ such that $0 \leq x_1$ and $0 \leq x_2$. 
    If $\varphi_1, \varphi_2 \in T_0$ with $0 \leq \varphi_1 \leq x_1$ and $0 \leq \varphi_2 \leq x_2$ then
    $0 \leq \varphi_1 + \varphi_2 \leq x_1 + x_2$ and so 
    $$S(\varphi_1)+S(\varphi_2) =S(\varphi_1 + \varphi_2) \leq P(x_1+x_2).$$
        Since this holds for all
    $\varphi_1, \varphi_2 \in T_0$ such that $0 \leq \varphi_1 \leq x_1$ and $0 \leq \varphi_2 \leq x_2$
    we have 
    \begin{align}
        P(x_1)+P(x_2)\leq P(x_1+x_2). \label{eqn:thms_1}
    \end{align}
    
    To show the reverse inequality, let $\varphi \in T_0$ such that $0 \leq \varphi \leq x_1+x_2$. 
    Then $\varphi- x_1 \leq x_2$ and since $0 \leq x_2$ we have 
    \begin{align}
        (\varphi-x_1)\lor 0 \leq x_2. \label{eqn:thms_2}
    \end{align}
        Furthermore,
    $$\varphi + x_1 = \varphi \land x_1 + \varphi \lor x_1= \varphi \land x_1 + (\varphi - x_1)\lor 0 + x_1.$$
    Subtracting $x_1$ from both sides we get $\varphi= \varphi \land x_1 + (\varphi - x_1)\lor 0$. 
    Using \eqref{eqn:thms_2} and the fact that
    $0 \leq \varphi \land x_1 \leq x_1$, we have 
    $$S(\varphi \land x_1) \leq P(x_1) $$
    and
    $$S((\varphi- x_1)\lor 0)\leq P(x_2).$$ 
    Consequently, we get that 
    \begin{align*}
        S(\varphi)&=S(\varphi \land x_1 + (\varphi - x_1)\lor 0) \\
        &=S(\varphi \land x_1) + S((\varphi - x_1)\lor 0) \\
        &\leq P(x_1) + P(x_2).
    \end{align*}
    Since this holds for all $\varphi \in T_0$ with $0 \leq \varphi \leq x_1+x_2$ we have 
    $$P(x_1+x_2)\leq P(x_1)+P(x_2).$$
    
    This, together with \eqref{eqn:thms_1} allows us to conclude 
    that for $x_1, x_2 \in T_0^+$ we have $$P(x_1+x_2)=P(x_1)+P(x_2).$$ 
    \end{proof}
    \begin{lemma} \label{lemma:Slinear}
        Let $x \in T_0^+$, then $P(cx)=cP(x)$ for each nonnegative scalar $c \in \mathbb{R}$.
    \end{lemma}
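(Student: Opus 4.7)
The plan is to prove the two cases $c = 0$ and $c > 0$ separately, with the second case being a simple change-of-variables argument that leverages the linearity of $S$ and the fact that $T_0$ is closed under scalar multiplication.

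For $c = 0$, I would note that $cx = 0$, so $P(cx) = \sup\{S(\varphi) : \varphi \in T_0, 0 \leq \varphi \leq 0\} = S(0) = 0$, where the last equality uses the linearity (S1) of the $S$-integral applied to $0 = 0 \cdot 0$. This matches $0 \cdot P(x) = 0$.

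For $c > 0$, the key observation is that the map $\varphi \mapsto c\varphi$ is a bijection of $T_0$ onto itself (its inverse being $\psi \mapsto \psi/c$, which is well-defined since $c \neq 0$ and $T_0$ is closed under real scalar multiplication), and this bijection restricts to a bijection between the set $\{\varphi \in T_0 : 0 \leq \varphi \leq x\}$ and the set $\{\psi \in T_0 : 0 \leq \psi \leq cx\}$, because $0 \leq \varphi \leq x$ if and only if $0 \leq c\varphi \leq cx$. Therefore
\[
\{S(\psi) : \psi \in T_0,\ 0 \leq \psi \leq cx\} = \{S(c\varphi) : \varphi \in T_0,\ 0 \leq \varphi \leq x\} = \{c\, S(\varphi) : \varphi \in T_0,\ 0 \leq \varphi \leq x\},
\]
where the last equality uses (S1). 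Taking the supremum and pulling the nonnegative constant $c$ outside gives $P(cx) = c P(x)$.

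There is no real obstacle here; the only subtlety is making sure to keep track of the positivity of $c$ when pulling it outside the supremum (a negative constant would flip the supremum into an infimum), which is why the zero case must be handled separately and why the claim is only stated for nonnegative scalars. This compatibility with the sign of $c$ is also why the extension to all of $T_0$ later requires the decomposition into $x \vee 0$ and $(-x) \vee 0$ rather than a direct scalar multiplication argument.
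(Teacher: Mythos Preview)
Your proof is correct and follows essentially the same change-of-variables argument as the paper: rewrite the index set of the supremum via $\varphi \mapsto c\varphi$, use (S1) to get $S(c\varphi)=cS(\varphi)$, and pull the nonnegative constant out of the supremum. The only cosmetic difference is that you treat $c=0$ separately, whereas the paper folds all nonnegative $c$ into a single computation; your version is arguably a bit more careful, since the bijection reasoning does not literally apply when $c=0$.
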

    \begin{proof}
        Let $x \in T_0^+$ and $c$ a nonnegative scalar. 
        Then 
        \begin{align*}
            P(cx)&= \sup \set{S(\varphi)}{\varphi \in T_0 \text{ such that } 0\leq \varphi \leq cx} \\
        &= \sup \set{S(c\varphi)}{\varphi \in T_0 \text{ such that } 0\leq \varphi \leq x} \\
        &= c \, \sup \set{S(\varphi)}{\varphi \in T_0 \text{ such that } 0\leq \varphi \leq x} \\
        &=c P(x). \qedhere
        \end{align*}
    \end{proof}
        We have shown that $P$ is additive and preserves 
        positive scalar multiplication on $T_0^+$. 
        We now proceed to prove linearity of $S^+$ on $T_0$.

        \newpage
    \begin{lemma}
        We have:
        \begin{enumerate}[(i)]
            \item For all $x \in T_0$, if $x = \varphi-\psi$ with 
                $\varphi \geq 0$ and $\psi \geq 0$, then $S^+(x)=P(\varphi)-P(\psi)$.
            \item For $x_1,x_2 \in T_0$, $S^+(x_1+x_2)=S^+(x_1)+S^+(x_2)$.
            \item For all $x \in T_0$ and $c \in \mathbb{R}$, $S^+(cx)=cS^+(x)$. 
        \end{enumerate}
    \end{lemma}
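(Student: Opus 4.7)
The plan is to prove (i) first, since (ii) and (iii) will follow quickly once (i) is in hand. For (i), I would start from the canonical decomposition $x = (x\lor 0) - ((-x)\lor 0)$, which expresses $x$ as the difference of two nonnegative elements of $T_0$, and compare it with the given decomposition $x = \varphi - \psi$. Rearranging the equality $\varphi - \psi = (x\lor 0) - ((-x)\lor 0)$ yields
\[
\varphi + ((-x)\lor 0) \;=\; \psi + (x\lor 0),
\]
and the crucial point is that both sides are now sums of elements of $T_0^+$. Applying Lemma \ref{lemma:pos_additive} to each side gives $P(\varphi) + P((-x)\lor 0) = P(\psi) + P(x\lor 0)$, so rearranging yields $P(\varphi) - P(\psi) = P(x\lor 0) - P((-x)\lor 0) = S^+(x)$. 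This is the conceptual heart of the argument: the definition $S^+$ via the canonical $\pm$ decomposition is independent of which nonnegative decomposition we pick.

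For (ii), I would just exploit (i). Writing $x_1 = (x_1\lor 0) - ((-x_1)\lor 0)$ and $x_2 = (x_2\lor 0) - ((-x_2)\lor 0)$, I obtain the decomposition
\[
x_1+x_2 \;=\; \bigl[(x_1\lor 0)+(x_2\lor 0)\bigr] - \bigl[((-x_1)\lor 0)+((-x_2)\lor 0)\bigr],
\]
where both bracketed expressions lie in $T_0^+$. Applying (i) to this decomposition, followed by Lemma \ref{lemma:pos_additive} to split each $P$, gives $S^+(x_1+x_2) = P((x_1\lor 0)+(x_2\lor 0)) - P(((-x_1)\lor 0)+((-x_2)\lor 0)) = S^+(x_1)+S^+(x_2)$.

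For (iii), I would split on the sign of $c$. If $c \ge 0$, then $(cx)\lor 0 = c(x\lor 0)$ and $(-cx)\lor 0 = c((-x)\lor 0)$, so Lemma \ref{lemma:Slinear} gives $S^+(cx) = cP(x\lor 0) - cP((-x)\lor 0) = cS^+(x)$. If $c < 0$, then $(cx)\lor 0 = (-c)((-x)\lor 0)$ and $(-cx)\lor 0 = (-c)(x\lor 0)$, so Lemma \ref{lemma:Slinear} again gives
\[
S^+(cx) \;=\; (-c)P((-x)\lor 0) - (-c)P(x\lor 0) \;=\; c\bigl[P(x\lor 0) - P((-x)\lor 0)\bigr] \;=\; cS^+(x).
\]

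The main (really, the only) obstacle is spotting the rearrangement trick in (i) that moves all negative summands to the opposite side so that additivity of $P$ on $T_0^+$ becomes applicable; once this is done, (ii) is essentially a corollary of (i), and (iii) is a routine case split using the homogeneity already established in Lemma \ref{lemma:Slinear}.
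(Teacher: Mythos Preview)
Your proposal is correct and follows essentially the same approach as the paper: the rearrangement trick in (i) to reduce to additivity of $P$ on $T_0^+$, the use of (i) together with Lemma~\ref{lemma:pos_additive} for (ii), and the sign-of-$c$ case split via Lemma~\ref{lemma:Slinear} for (iii) are exactly what the paper does. The only cosmetic difference is that in (ii) the paper writes generic nonnegative decompositions $x_i=\varphi_i-\psi_i$ rather than the canonical ones, but the argument is identical.
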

    \begin{proof}
        We prove (i).
        Let $x \in T_0$ and $\varphi,\psi \in T_0$ both nonnegative 
        such that $x= \varphi - \psi$. Recall that we can write $x= x\lor0 - (-x)\lor0$. Thus 
        \[
            \varphi-\psi= x=  x\lor0 - (-x)\lor0,
        \] and adding $\psi + (-x)\lor0$ to the equality we have 
        \[
            \varphi+(-x)\lor 0= x \lor 0 + \psi.
        \]
         Because all these functions are nonnegative, using Lemma \ref{lemma:pos_additive}, we have  
        \begin{align*}
            P(x\lor 0)+ P(\psi)&=P(x\lor 0+ \psi)\\
            &=P(\varphi + (-x)\lor 0)\\
            &=P(\varphi) + P((-x)\lor 0).
        \end{align*}
        Rearranging, we have
        $$S^+(x)= P(x\lor0) - P((-x)\lor0)=P(\varphi)-P(\psi).$$ 
        This proves (i) and we conclude that the way we decompose $x$ as the difference
        between two positive functions does not matter in the definition of $S^+$. 
    
        We prove (ii).
        Let $x_1, x_2 \in T_0$ and $\varphi_1, \varphi_2, \psi_1, \psi_2 \in T_0$ all nonnegative such that 
        $x_1= \varphi_1-\psi_1$ and $x_2=\varphi_2-\psi_2$. 
        Then we have $x_1+x_2 = (\varphi_1+\varphi_2)- (\psi_1+\psi_2)$. Hence, using Lemma \ref{lemma:pos_additive},  
        \begin{align*}
        S^+(x_1+x_2)&=P(\varphi_1+\varphi_2)-P(\psi_1+\psi_2)\\
        &=P(\varphi_1)+P(\varphi_2)-P(\psi_1)-P(\psi_2)\\
        &=P(\varphi_1)-P(\psi_1)+P(\varphi_2)-P(\psi_2).
        \end{align*}
        Using (i), we know that $S^+(x_1)=P(\varphi_1)-P(\psi_1)$ and 
        $S^+(x_2)=P(\varphi_2)-P(\psi_2)$. Thus we have 
        \[
            S^+(x_1+x_2)= P(\varphi_1)-P(\psi_1)+P(\varphi_2)-P(\psi_2)=S^+(x_1)+S^+(x_2).
        \]
         This shows that $S^+$ is additive.
    
        We prove (iii).
        Let $x \in T_0$. If $c \in \mathbb{R}$ is a nonnegative scalar, then we have 
        \begin{align*}
             S^+(cx)&=P((cx)\lor 0) - P((-cx)\lor 0) \\
             &= cP(x \lor 0)-cP((-x)\lor 0)\\
             &=cS^+(x).   
        \end{align*}
        If $c$ is a negative scalar, then 
        \begin{align*}
              S^+(cx)&=P((cx)\lor 0) - P((-cx)\lor 0)\\
              &=(-c)P((-x)\lor 0)- (-c) P(x\lor 0)\\
              &= cP(x \lor 0)-cP((-x)\lor 0)\\  
              &=cS^+(x).
        \end{align*}
            We conclude that $S^+(cx)=cS^+(x)$ for all scalars $c$, 
            and thus $S^+$ is linear. 
    \end{proof}

    Lemma \ref{lemma:Slinear} (ii) and (iii) is (D1) of Definition \ref{def:I_Integral}. 
    \begin{remark} \label{remark:Spos_D3}
        Property (D3) of Definition \ref{def:I_Integral} is satisfied by $S^+$, 
        for if $x \in T_0$ with $x \geq 0$, 
        then $S(0) \in \set{S(\varphi)}{\varphi \in T_0 \text{ such that } 0\leq \varphi \leq x}$ 
        and so $0=S(0)\leq S^+(x)$.\\
    \end{remark}

    What remains is to show that $S^+$ satisfies (D2) of Definition \ref{def:I_Integral}. 
    \begin{lemma}
        If $(x_n)\subset T_0$ is a sequence such that $(x_n)\searrow 0$ then 
        $$\lim_{n \rightarrow \infty}S^+(x_n)=0.$$
    \end{lemma}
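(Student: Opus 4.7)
My plan is to reduce the claim to a lower semicontinuity statement for $P$ at $x_1$, using the additivity of $P$ on $T_0^+$. Since $(x_n) \searrow 0$ forces each $x_n$ to be nonnegative, $S^+(x_n) = P(x_n \lor 0) - P((-x_n) \lor 0) = P(x_n)$, and it suffices to prove $P(x_n) \to 0$. The key algebraic observation is that $x_1 - x_n \in T_0^+$ and $x_1 = (x_1 - x_n) + x_n$, so Lemma \ref{lemma:pos_additive} delivers
\[
    P(x_n) = P(x_1) - P(x_1 - x_n),
\]
reducing the task to showing $P(x_1 - x_n) \to P(x_1)$ as $n \to \infty$.

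The upper bound $P(x_1 - x_n) \leq P(x_1)$ comes for free from monotonicity of $P$ on $T_0^+$, which is immediate from its definition as a supremum. For the matching lower bound, I would fix an arbitrary $\varphi \in T_0$ with $0 \leq \varphi \leq x_1$ and set $g_n := \varphi \land (x_1 - x_n) \in T_0^+$. Because $x_1 - x_n \nearrow x_1$ and $\varphi \leq x_1$, the nonnegative function $\varphi - g_n = (\varphi - (x_1 - x_n)) \lor 0$ lies in $T_0$ and decreases pointwise to $0$. Axiom (S2) then yields $S(\varphi - g_n) \to 0$, and hence $S(g_n) \to S(\varphi)$ by linearity. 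Since $g_n \leq x_1 - x_n$, the value $S(g_n)$ is admissible in the supremum defining $P(x_1 - x_n)$, so $\liminf_n P(x_1 - x_n) \geq S(\varphi)$, and taking the supremum over all admissible $\varphi$ gives $\liminf_n P(x_1 - x_n) \geq P(x_1)$, finishing the sandwich.

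Chaining the two pieces, $P(x_n) = P(x_1) - P(x_1 - x_n) \to 0$, which is exactly the desired conclusion $\lim_n S^+(x_n) = 0$.

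The main obstacle I anticipate is conceptual rather than technical. The natural first attempt is to choose, for each $n$, a near-optimiser $\varphi_n \in T_0$ with $0 \leq \varphi_n \leq x_n$ whose $S$-value nearly equals $P(x_n)$, and then try to apply (S2) to $(\varphi_n)$ directly; this fails because nothing forces the family $(\varphi_n)$ to be monotonically decreasing, and $S$ is not assumed monotone. The key trick is to dualise via the additivity of $P$: the increasing sequence $x_1 - x_n$ is tractable because, for each test function $\varphi$, the meets $\varphi \land (x_1 - x_n)$ supply the decreasing sequence $\varphi - g_n$ to which the Daniell continuity axiom (S2) can be applied cleanly, one $\varphi$ at a time.
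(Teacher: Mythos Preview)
Your proof is correct, and it takes a genuinely different route from the paper's.

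The paper does precisely the ``natural first attempt'' that you flag and then dismiss: it picks near-optimisers $\varphi_n$ with $0\le\varphi_n\le x_n$ and $P(x_n)\le S(\varphi_n)+\varepsilon/2^n$, then \emph{forces} monotonicity by setting $\psi_n:=\varphi_1\land\cdots\land\varphi_n$ and proves inductively (via the identity $\varphi_{n+1}+\psi_n=\varphi_{n+1}\lor\psi_n+\psi_{n+1}$ together with $S(\psi_n\lor\varphi_{n+1})\le P(x_n)$) that $P(x_n)\le S(\psi_n)+\varepsilon$. Since $\psi_n\le x_n$ gives $\psi_n\searrow 0$, axiom (S2) finishes. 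So the obstacle you anticipated is real, but the paper overcomes it by taking running infima rather than by dualising.

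Your argument is arguably cleaner: exploiting the additivity lemma to rewrite $P(x_n)=P(x_1)-P(x_1-x_n)$ converts the problem into lower semicontinuity of $P$ along the \emph{increasing} sequence $x_1-x_n$, and then a single test function $\varphi$ at a time suffices, with (S2) applied to the decreasing sequence $\varphi-g_n=(\varphi-(x_1-x_n))\lor 0\searrow 0$. This avoids both the $\varepsilon/2^n$ bookkeeping and the induction. The paper's route, on the other hand, produces an explicit monotone witness $(\psi_n)$ with $S(\psi_n)$ close to $P(x_n)$ uniformly in $n$, which is a slightly stronger intermediate conclusion.
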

    \begin{proof}
        
        Let $(x_n)$ be a sequence in $T_0$ such that $(x_n)\searrow 0$. Fix $\varepsilon>0$. 
        Then, using the definition of supremum, for each $n \in \mathbb{N}$,
        there exists a $\varphi_n \in T_0$ such that $0 \leq \varphi_n \leq x_n$ and 
        \begin{align}
        P(x_n)=  \sup \set{S(\varphi)}{\varphi \in T_0 \text{ such that } 0\leq \varphi \leq x_n}
                \leq S(\varphi_n)+ \frac{\varepsilon}{2^n}. \label{eqn:def_phi}
        \end{align}
         Define the sequence $(\psi_n) \subset T_0$ by 
         $$\psi_n:= \varphi_1 \land \varphi_2 \land \cdots \land \varphi_n,$$
         for $n \in \mathbb{N}$. 
        We show inductively that $P(x_n)\leq S(\psi_n)+ (\frac{1}{2}+\frac{1}{4}+\cdots+ 
            \frac{1}{2^n})\varepsilon$ for $n \in \mathbb{N}$.
        We know $\psi_1=\varphi_1$ and so the inequality 
        $P(x_1)\leq S(\psi_1)+\frac{\varepsilon}{2}$ 
        holds. Hence the base case is satisfied.
        Let $n \in \mathbb{N}$. As the induction hypothesis, assume that 
        \begin{align}
            P(x_n) \leq S(\psi_n)+ \left(\frac{1}{2}+\frac{1}{4}+\cdots+
                 \frac{1}{2^n}\right)\varepsilon. \label{eqn:induction}
        \end{align}
        Let $n \in \mathbb{N}$. We have $0 \leq \psi_n \leq x_n$ and $0 \leq \varphi_{n+1} \leq x_{n+1} \leq x_n$. 
        Consequently, we get the following inequality 
        $$0 \leq \psi_n \lor \varphi_{n+1}\leq x_{n}$$
         and, applying the induction hypothesis, we have 
        \begin{align}
           S(\psi_n \lor \varphi_{n+1})\leq P(x_n)
           \leq S(\psi_n)+ \left(\frac{1}{2}+\frac{1}{4}+\cdots+ 
           \frac{1}{2^n}\right)\varepsilon. \label{eqn:thms_3}
        \end{align}
        Also, recall the vector lattice identity 
        $$\varphi_{n+1} + \psi_n = \varphi_{n+1}\lor \psi_n+ \varphi_{n+1} \land \psi_n,$$
        and note that $\varphi_{n+1} \land \psi_n$ is equal to 
        $\psi_{n+1}$ by definition. We thus have the equality
        \[
            \varphi_{n+1} + \psi_n=\varphi_{n+1}\lor \psi_n+ \psi_{n+1}.
        \]
        Isolating $\psi_{n+1}$ in the above equation, we get  
        $$\psi_{n+1}=\varphi_{n+1}+\psi_n - \varphi_{n+1}\lor \psi_n.$$ 
        From our choice of $\varphi_{n+1}$ in \eqref{eqn:def_phi} we have 
        \begin{align} 
            P(x_{n+1}) -  \frac{\varepsilon}{2^{n+1}}\leq S(\varphi_{n+1}), \label{eqn:thms_4}
        \end{align}
        and from \eqref{eqn:thms_3} we have 
        \begin{align}
            -\left( \frac{1}{2}+\frac{1}{4}+\cdots+ \frac{1}{2^n}\right)  \varepsilon \leq
                    S(\psi_n)-S(\psi_n \lor \varphi_{n+1}). \label{eqn:thms_5}
        \end{align}
        Adding \eqref{eqn:thms_4} and \eqref{eqn:thms_5} together we obtain 
        \[
            P(x_{n+1})-\frac{1}{2^{n+1}}\varepsilon-\left( \frac{1}{2}+\frac{1}{4}+\cdots+ \frac{1}{2^n}\right)  \varepsilon
                    \leq S(\varphi_{n+1})+S(\psi_n)-S(\psi_n \lor \varphi_{n+1})=S(\psi_{n+1}),
        \]
        and so
        \[
            P(x_{n+1}) \leq S(\psi_{n+1})+ \left( \frac{1}{2}+\frac{1}{4}+\cdots+ \frac{1}{2^{n+1}}\right)\varepsilon.
        \]
        This concludes the inductive step and we have shown \eqref{eqn:induction} to hold for all $n \in \mathbb{N}$. Hence   
        \begin{align}
            S^+(x_n)=P(x_n)\leq S(\psi_n)+\left(\frac{1}{2}+\frac{1}{4}+\cdots+ \frac{1}{2^n}\right) \varepsilon 
            < S(\psi_n)+\varepsilon \label{eqn:S_epsilon}
        \end{align}
        for all $n\in \mathbb{N}$. 
        
        Property (D3) from Definition \ref{def:I_Integral} for $S^+$ 
        was treated in Remark \ref{remark:Spos_D3}. 
        This allows us to conclude that for all $n \in \mathbb{N}$
        we have $0\leq S^+(x_n)$. Furthermore, Property (D3) from Definition \ref{def:I_Integral}
        implies that the sequence $(S^+(x_n))$ is decreasing. 
        Thus $\lim_{n\rightarrow \infty} S^+(x_n)$ exists and
        is nonnegative. Recall that $\psi_n \leq x_n$ for each $n \in \mathbb{N}$. Therefore, since 
        $(x_n) \searrow 0$, we have $(\psi_n) \searrow 0$. 
        Using \eqref{eqn:S_epsilon} and (S2) from Definition \ref{def:S-integral}
        we know that the limits satisfy the following:
        \[
            0\leq \lim_{n\rightarrow \infty} S^+(x_n)\leq 
                \lim_{n\rightarrow \infty}S(\psi_n)+\varepsilon =\varepsilon.
        \]
        This holds for every $\varepsilon>0$ and so 
        $\lim_{n\rightarrow \infty} S^+(x_n)=0$. We therefore conclude property (D2) of Definition 
        \ref{def:I_Integral} as well.
    \end{proof}

    We have shown that $S^+$ satisfies (D1), (D2), and (D3) of Definition \ref{def:I_Integral} 
    and hence that $S^+$ is an $I$-integral. 
    Associated with $S$ there are two other $I$-integrals.
    \begin{definition}\label{def:riesz_decomp}
        Define $S^-: T_0 \rightarrow \mathbb{R}$ by 
        \[
            S^-(x):=S^+(x)-S(x).
        \]
        Define $|S|:T_0 \rightarrow \mathbb{R}$ by 
        \[
            |S|(x):=S^+(x)+ S^-(x)=2S^+(x)-S(x).
        \]
    \end{definition}
    
    \begin{proposition}\label{prop:riesz_decomp}
        Both $S^-$ and $|S|$ are $I$-integrals.
    \end{proposition}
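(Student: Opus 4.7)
The plan is to verify the three defining properties (D1), (D2), (D3) of an $I$-integral for $S^-$, and then conclude the corresponding statement for $|S|$ by observing that the sum of two $I$-integrals is again an $I$-integral.

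First I would tackle $S^-$. Linearity (D1) is immediate: since both $S^+$ (already shown to be an $I$-integral) and $S$ (by (S1) in Definition \ref{def:S-integral}) are linear on $T_0$, the difference $S^- = S^+ - S$ is linear as well. For (D2), let $(x_n) \subset T_0$ with $(x_n) \searrow 0$. Then $\lim_{n \to \infty} S^+(x_n) = 0$ because $S^+$ is an $I$-integral, and $\lim_{n \to \infty} S(x_n) = 0$ by (S2). Hence $\lim_{n \to \infty} S^-(x_n) = 0$.

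The only step requiring a little thought is (D3), namely that $S^-(x) \geq 0$ whenever $0 \leq x$. Here I would invoke the very definition of $P$: if $x \in T_0^+$, then $x$ itself is one of the $\varphi$'s appearing in the defining supremum for $P(x)$, i.e.\ $0 \leq x \leq x$, so
\[
S(x) \leq \sup \set{S(\varphi)}{\varphi \in T_0,\ 0 \leq \varphi \leq x} = P(x) = S^+(x).
\]
Rearranging gives $S^-(x) = S^+(x) - S(x) \geq 0$, which is (D3). This is the step I expect to be the only non-routine observation; everything else is formal.

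Finally, for $|S| = S^+ + S^-$, I would simply note that (D1), (D2), (D3) are each preserved under taking the pointwise sum of two $I$-integrals: linearity is linear, the limit of a sum of sequences tending to $0$ is $0$, and the sum of two nonnegative numbers is nonnegative. Applying this to the $I$-integrals $S^+$ and $S^-$ shows that $|S|$ is also an $I$-integral, completing the proof.
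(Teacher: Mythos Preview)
Your proof is correct and essentially mirrors the paper's own argument: both verify (D1) and (D2) by linearity of $S^+$ and $S$ (resp.\ their limit behaviour), and both obtain (D3) from the key observation that $x$ itself lies in the set defining $P(x)$, whence $S(x)\leq S^+(x)$. The only cosmetic difference is that the paper treats $|S|$ in parallel with $S^-$ rather than invoking the general fact that a sum of $I$-integrals is an $I$-integral.
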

    \begin{proof}
        The fact that $S^-$ and $|S|$ are linear follows
         as both $S^+$ and $S$ are linear. If $x \in T_0$ with $0 \leq x$, 
        then $S(x)$ is in the set over which we take supremum in Definition 
        \ref{def:pos_int}. Thus we have $S(x)\leq S^+(x)$
        and $0\leq S^+(x)-S(x)=S^-(x)$. Hence we also have $0 \leq S^+(x)+S^-(x)=|S|(x)$.
         If $(x_n)$ is a sequence in $T_0$ such that $(x_n) \searrow 0$, 
        then $S^-(x_n)$ and $|S|(x_n)$ are both decreasing (we have shown them to preserve order). 
        Furthermore
        $$ \lim_{n \rightarrow \infty } S^-(x_n)=\lim_{n \rightarrow \infty} 
            S^+(x_n)-\lim_{n \rightarrow \infty} S(x_n)=0,$$
        and
        $$ \lim_{n \rightarrow \infty } |S|(x_n)=\lim_{n \rightarrow \infty} 2S^+(x_n)
            -\lim_{n \rightarrow \infty} S(x_n)=0.$$ \qedhere
    \end{proof}
    \begin{remark}
        From Definition \ref{def:riesz_decomp}, one has $S= S^+ - S^-$ and that using Proposition
        \ref{prop:riesz_decomp} both $S^+$ and $S^-$ are $I$-integrals. If one phrases this 
        in the language of Riesz theory, we say $S$ has been decomposed into the difference 
        between two positive linear functionals. This result is stated formally as:
        \begin{quotation}
            If $M$ and $N$ are Riesz spaces with $N$ Dedekind complete, and if $T:M \rightarrow N$
            is an order bounded linear functional, then there exists positive linear functionals 
            $T^+: M \rightarrow N$ and $T^-: M \rightarrow N$ such that $T= T^+-T^-$.
        \end{quotation}
        
        Today we trace back the origins of Riesz theory to the work by 
        Frigyes Riesz around the time when he delivered the presentation 
        ``Sur la décomposition des opérations fonctionelles linéaires'' at the International 
        Congress of Mathematics, Bologna in 1928. However, as pointed out by Koos Grobler 
        in his plenary talk ``101 years of vector lattice theory'' given at the Positivity conference 
        in Pretoria in 2019, Daniell preempted much of this work by almost a decade. 
    \end{remark}
    In Section \ref{section:extension} we discussed how to extend a $I$-integral from class $T_0$. 
    We thus have three $I$-integrals: $S^+, S^-$, and $|S|$. Let the classes to which these
    integrals extend be $\mathcal{L}_+$ for $S^+$, $\mathcal{L}_-$ for $S^-$, 
    and $\mathcal{L}$ for $|S|$. We show that $\mathcal{L_+} \cap \mathcal{L_-}= \mathcal{L}$. 
    We then use these extensions to define an extension for an    $S$-integral. \\
    
    In Section \ref{section:extension}, we defined 
    \[
        T_1:= \lbrace x \in {\overline{\mathbb{R}}}^X: 
            \text{ there exists a sequence } (x_n) \subset T_0 \text{ such that }
            (x_n) \nearrow x\rbrace,
    \]
    and extended the given $I$-integral to a functional 
    $I_1: T_1 \rightarrow \mathbb{R}\cup \lbrace\infty \rbrace$. 
    Recall from Definition~\ref{definition:seqi} that
     if $x \in T_1$ with $(x_n)$ an increasing sequence in 
    $T_0$ such that $(x_n)\nearrow x$, then $I_1$ was defined by 
    \[
        I_1(x) := \lim_{n \rightarrow \infty} I(x_n).
    \]
    We further defined the upper sum 
    $\overline{I}(x):= \inf \set{I_1(\varphi)}{x\leq \varphi \text{ and } \varphi \in T_1}$.
    
    Denote by $I_+$ the extension of $S^+$ to $T_1$, by $I_-$ the extension 
    of $S^-$ to $T_1$, and by $I_1$ the 
    extension of $|S|$ to $T_1$. We further denote by $\overline{I}_1(x)$ 
    the upper sum for $|S|$, by $\overline{I}_+(x)$ the upper sum for $S^+$, and 
    by $\overline{I}_-(x)$ the upper sum for $S^-$. 
    We then have the following lemma:
    \begin{lemma} \label{lemma:modlemma}
        For all $x \in T_1$ we have
        \[
            I_1(x)=I_+(x)+I_-(x),
        \]
        and for all $x \in {\overline{\mathbb{R}}}^X$ we have 
        \[
            \overline{I}_1(x)=\overline{I}_+(x)+\overline{I}_-(x).
        \]
    \end{lemma}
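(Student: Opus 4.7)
The plan is to establish the identity on $T_1$ first via a direct passage to limits, and then derive the identity for the upper sums using this together with the lattice structure of $T_1$.

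For the first identity, let $x \in T_1$ and choose a sequence $(x_n) \subset T_0$ with $(x_n) \nearrow x$. Since $S^+$ and $S^-$ are $I$-integrals by Proposition \ref{prop:riesz_decomp}, property (D3) makes them monotone on $T_0$, so the sequences $(S^+(x_n))$ and $(S^-(x_n))$ are increasing and bounded below by $S^+(x_1)$ and $S^-(x_1)$ respectively; hence each converges in $\mathbb{R} \cup \{\infty\}$. Because $|S| = S^+ + S^-$ on $T_0$ by Definition \ref{def:riesz_decomp}, Definition \ref{definition:seqi} and the elementary fact that limits of monotone sequences commute with addition give
$$
I_1(x) = \lim_{n \to \infty} |S|(x_n) = \lim_{n \to \infty} S^+(x_n) + \lim_{n \to \infty} S^-(x_n) = I_+(x) + I_-(x).
$$

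For the second identity, fix $x \in \overline{\mathbb{R}}^X$. The easy direction is $\overline{I}_1(x) \geq \overline{I}_+(x) + \overline{I}_-(x)$: for any $\varphi \in T_1$ with $x \leq \varphi$, the first part gives $I_1(\varphi) = I_+(\varphi) + I_-(\varphi) \geq \overline{I}_+(x) + \overline{I}_-(x)$, and one then takes the infimum over $\varphi$. For the reverse direction, the key ingredient is that $T_1$ is closed under $\wedge$. Given $\varphi_1, \varphi_2 \in T_1$ with $x \leq \varphi_1$ and $x \leq \varphi_2$, set $\varphi := \varphi_1 \wedge \varphi_2 \in T_1$, so $x \leq \varphi \leq \varphi_i$ for $i = 1, 2$. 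Applying Lemma \ref{lemma:limit} to the $I$-integrals $S^+$ and $S^-$ yields monotonicity of $I_+$ and $I_-$ on $T_1$, whence $I_+(\varphi) \leq I_+(\varphi_1)$ and $I_-(\varphi) \leq I_-(\varphi_2)$. Combined with the first part,
$$
\overline{I}_1(x) \leq I_1(\varphi) = I_+(\varphi) + I_-(\varphi) \leq I_+(\varphi_1) + I_-(\varphi_2).
$$
Minimising independently over $\varphi_1$ and $\varphi_2$ gives $\overline{I}_1(x) \leq \overline{I}_+(x) + \overline{I}_-(x)$.

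The main obstacle is the reverse inequality for the upper sums, where one must compare a single infimum against a sum of two separate infima; the meet operation on $T_1$ is precisely what lets one collapse the two upper-bounding functions into a single one without losing the ordering against $x$. A minor housekeeping point is arithmetic with $+\infty$: since $I_+$ and $I_-$ take values in $\mathbb{R} \cup \{\infty\}$ (each is bounded below on any $\varphi \in T_1$ by its value on any $T_0$-approximant, as already noted), every sum appearing above is unambiguously defined in $\mathbb{R} \cup \{\infty\}$, and the inequalities go through without case-splitting.
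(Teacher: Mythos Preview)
Your proof is correct and follows essentially the same route as the paper's: the first identity is a direct limit argument using $|S|=S^++S^-$, and the second is obtained in both directions exactly as in the paper, with the key step being to pass to $\varphi=\varphi_1\wedge\varphi_2\in T_1$. The only cosmetic difference is that the paper phrases the reverse inequality via an $\varepsilon$-approximation of the two infima, whereas you take the infima directly; your version is slightly cleaner and your closing remark on extended-real arithmetic handles the edge cases that the paper leaves implicit.
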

    \begin{proof}
        We showed in Section \ref{section:extension} that the definitions for 
        $I_1$, $I_+$, and $I_+$ are well-defined (we refer the readers to Corollary \ref{cor:well_def}).
        Let $x \in T_1$ with $(x_n)$ an increasing sequence in $T_0$ such that $(x_n)\nearrow x$.
         Then 
         $$I_1(x)=\lim_{n \rightarrow \infty} |S|(x_n)=
            \lim_{n \rightarrow \infty} S^+(x_n)+S^-(x_n)=I_+(x)+I_-(x).$$ 
         This proves the first part.\\
    
         Let $x\in {\overline{\mathbb{R}}}^X$. If $\varphi \in T_1$ 
         such that $x \leq \varphi$, then 
         $$\overline{I}_+(x)+\overline{I}_-(x) \leq I_+(\varphi)+I_-(\varphi)=I_1(\varphi).$$
         This holds for all $\varphi \in T_1$, thus 
         \begin{align}
            \overline{I}_+(x)+\overline{I}_-(x) \leq \overline{I}_1(x). \label{eqn:modlemma_1}
         \end{align}
         
         To prove the reverse inequality, fix $\varepsilon>0$.  
        There exists $\varphi_1, \varphi_2 \in T_1$ with 
        $x\leq \varphi_1$ and $x\leq \varphi_2$ such that
         $$ I_+(\varphi_1)- \varepsilon<\overline{I}_+(x) 
            \text{ and }  I_+(\varphi_2)- \varepsilon<\overline{I}_-(x).$$ 
         Let $\psi := \varphi_1 \land \varphi_2$. Note that $x\leq \psi$ and 
         that $\psi \in T_1$. Then 
         \[
            \overline{I}_1(x)-2\varepsilon\leq I_1(\psi)-2\varepsilon
                =I_+(\psi)+I_-(\psi)-2\varepsilon\leq I_+(\varphi_1)+I_-(\varphi_2)-2\varepsilon
                <\overline{I}_+(x)+\overline{I}_-(x).
         \]
         Since this holds for all positive $\varepsilon$ we have 
         $\overline{I}_1(x)\leq \overline{I}_+(x)+\overline{I}_-(x)$.

         This, together with \eqref{eqn:modlemma_1}, 
         allows us to conclude that $\overline{I}_1(x)=\overline{I}_+(x)+\overline{I}_-(x)$. 
         This concludes the proof.
    \end{proof}
    
    Recall that we defined a function $x \in {\overline{\mathbb{R}}}^X$ to be $I$-integrable for a 
    given $I$-integral $I$ if $\overline{I}(x)=\underline{I}(x)$
     and this value is finite; furthermore, the integral is defined to be the common value.    
     Similarly, we define the terms $S^+$-integrable, $S^-$-integrable, 
     and $|S|$-integrable.
    Denote the Daniell integral induced by $S_+$ with $\mathcal{S}_+$, the Daniell integral 
    induced by $S_-$ with $\mathcal{S}_-$ and the Daniell integral induced by $|S|$ with
    $|\mathcal{S}|$. 
    
    \begin{theorem}
        Let $x \in {\overline{\mathbb{R}}}^X$. Then $x$ is $S_+$- and $S_-$- 
        integrable if and only if $x$ is $|S|$-integrable. 
        Furthermore, if $x$ is $|S|$-integrable, then 
        $|\mathcal{S}|(x)=\mathcal{S}_+(x)+\mathcal{S}_-(x)$.
    \end{theorem}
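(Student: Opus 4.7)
The main lever is Lemma \ref{lemma:modlemma}, which gives the pointwise additivity
$\overline{I}_1 = \overline{I}_+ + \overline{I}_-$ on $\overline{\mathbb{R}}^X$. Recall that
for any $I$-integral, integrability of $x$ means $\overline{I}(x) = \underline{I}(x) = -\overline{I}(-x)$
with this common value finite. The plan is therefore to apply Lemma \ref{lemma:modlemma} at both
$x$ and $-x$ and then do some simple bookkeeping with extended-real arithmetic.

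For the forward direction, assume $x$ is both $S^+$- and $S^-$-integrable. Then
$\overline{I}_+(x) = \underline{I}_+(x)$ and $\overline{I}_-(x) = \underline{I}_-(x)$, each finite.
Applying Lemma \ref{lemma:modlemma} at $x$ gives
$\overline{I}_1(x) = \overline{I}_+(x) + \overline{I}_-(x)$, which is finite. Applying it at $-x$ and
negating gives $-\overline{I}_1(-x) = -\overline{I}_+(-x) - \overline{I}_-(-x) = \underline{I}_+(x) + \underline{I}_-(x)$.
Equating these two expressions yields $\overline{I}_1(x) = \underline{I}_1(x)$ with finite common value,
so $x$ is $|S|$-integrable and $|\mathcal{S}|(x) = \mathcal{S}_+(x) + \mathcal{S}_-(x)$.

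For the reverse direction, assume $x$ is $|S|$-integrable, so $\overline{I}_1(x)$ and
$\overline{I}_1(-x)$ are finite and satisfy $\overline{I}_1(x) + \overline{I}_1(-x) = 0$. By
Lemma \ref{lemma:modlemma}, we have
$\overline{I}_1(x) = \overline{I}_+(x) + \overline{I}_-(x)$ and
$\overline{I}_1(-x) = \overline{I}_+(-x) + \overline{I}_-(-x)$. The first step is to argue that all four
summands are finite: each sum is a finite real number equal to a sum in
$[-\infty,+\infty]$, so neither summand can be $\pm\infty$ (otherwise the sum would be
$\pm\infty$ or undefined). Consequently
$\underline{I}_+(x) = -\overline{I}_+(-x)$ and $\underline{I}_-(x) = -\overline{I}_-(-x)$ are finite, and
adding the two displayed identities yields
\[
0 = \overline{I}_1(x) + \overline{I}_1(-x) = \bigl[\overline{I}_+(x) - \underline{I}_+(x)\bigr] + \bigl[\overline{I}_-(x) - \underline{I}_-(x)\bigr].
\]
Since $\underline{I}_+(x)$ and $\underline{I}_-(x)$ are finite, Lemma \ref{BigLemma}~(iv) applied to the
$I$-integrals $S^+$ and $S^-$ gives that both bracketed quantities are nonnegative. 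Their sum
being zero forces each to be zero, so $x$ is $S^+$-integrable and $S^-$-integrable.

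The main obstacle is the careful finiteness bookkeeping in the reverse direction: in general
each of $\overline{I}_+(x)$ and $\overline{I}_-(x)$ lives in $[-\infty,+\infty]$, so one has to use the
finiteness of their sum $\overline{I}_1(x)$ (and likewise for $-x$) to rule out infinite values before
invoking Lemma \ref{BigLemma}~(iv), which is only available once $\underline{I}_+(x)$ and
$\underline{I}_-(x)$ are known to be finite. Once that subtlety is dispatched, the identity
$|\mathcal{S}|(x) = \mathcal{S}_+(x) + \mathcal{S}_-(x)$ follows immediately from Lemma \ref{lemma:modlemma}
evaluated at $x$.
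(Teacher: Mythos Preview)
Your proof is correct and follows essentially the same route as the paper: both directions hinge on Lemma~\ref{lemma:modlemma} together with Lemma~\ref{BigLemma}~(iv), and the final identity $|\mathcal{S}|(x)=\mathcal{S}_+(x)+\mathcal{S}_-(x)$ drops out of the same lemma. If anything, your treatment of the direction $|S|$-integrable $\Rightarrow$ $S^+$- and $S^-$-integrable is slightly more careful than the paper's, since you explicitly argue that the four quantities $\overline{I}_\pm(\pm x)$ are finite before invoking Lemma~\ref{BigLemma}~(iv); the paper tacitly assumes this when it writes $\underline{I}_1(x)=\underline{I}_+(x)+\underline{I}_-(x)$ and then applies (iv).
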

    \begin{proof}
        Let $x \in {\overline{\mathbb{R}}}^X$. Assume $x$ is $|S|$-integrable.
         We then have $\overline{I}_1(x)=\underline{I}_1(x)$ and is finite. 
        Thus by Lemma \ref{lemma:modlemma}  
        $$\overline{I}_+(x)+\overline{I}_-(x)= \overline{I}_1(x)=\underline{I}_1(x)=\underline{I}_+(x)+\underline{I}_-(x),$$ 
        and so 
        \begin{align}
            \overline{I}_+(x)-\underline{I}_+(x)+\overline{I}_-(x)-\underline{I}_-(x)=0. 
            \label{eqn:modthm_1}
        \end{align}
        By Lemma \ref{BigLemma} (iv) 
        \[
             0 \leq \overline{I}_+(x) - \underline{I}_+(x)  
             \text{ and } 0 \leq \overline{I}_-(x) - \underline{I}_-(x).
        \] 
        Therefore both differences in \eqref{eqn:modthm_1} are nonnegative, 
        and so both must be equal to 0. Thus
        $\overline{I}_+(x)=\underline{I}_+(x)$ and $\overline{I}_-(x)=\underline{I}_-(x)$ 
        and we conclude $x$ to be $S_+$- and $S_-$-integrable.
        
        To prove the converse, assume $x$ is $S_+$- and $S_-$-integrable. 
        We then have by Lemma \ref{lemma:modlemma} that 
        \[
            \overline{I}_1(x)= \overline{I}_+(x)+\overline{I}_-(x)=
                \underline{I}_+(x)+\underline{I}_-(x)=\underline{I}_1(x).
        \]
        Thus $x$ is $|S|$ integrable. 

        If $x$ is an integrable function with respect to the three integrals $S_+$, $S_-$ 
        and $|S|$, then using Lemma \ref{lemma:modlemma} and 
        recalling the definitions of the extended $S^+$-, $S^-$, and $|S|$-integrals we have 
        \[
            |\mathcal{S}|(x)=\overline{I}_1(x)= \overline{I}_+(x)+\overline{I}_-(x)
                =\mathcal{S}_+(x)+\mathcal{S}_-(x).
        \]
        This completes the proof.
    \end{proof}
    
    \begin{corollary}
        We have $\mathcal{L_+} \cap \mathcal{L_-}= \mathcal{L}$.
    \end{corollary}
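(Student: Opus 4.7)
The corollary is essentially a direct translation of the preceding theorem into the notation of function classes, so the plan is to unpack the definitions and then invoke the theorem. The main step is just to recognize that the three classes $\mathcal{L}_+, \mathcal{L}_-, \mathcal{L}$ are precisely the sets of functions that are $S^+$-integrable, $S^-$-integrable, and $|S|$-integrable, respectively, by construction (they are the Daniell integral classes obtained by the extension procedure of Section~\ref{section:extension} applied to each of the three $I$-integrals given by Definition \ref{def:riesz_decomp} and Proposition \ref{prop:riesz_decomp}).

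The proof would therefore proceed as follows. First I would fix $x \in \overline{\mathbb{R}}^X$ and expand the statement $x \in \mathcal{L}_+ \cap \mathcal{L}_-$ into the conjunction ``$x$ is $S^+$-integrable and $x$ is $S^-$-integrable''. Then I would apply the biconditional in the preceding theorem, which asserts that this conjunction is equivalent to ``$x$ is $|S|$-integrable'', i.e.\ $x \in \mathcal{L}$. Since this equivalence holds for every $x \in \overline{\mathbb{R}}^X$, the two sets coincide.

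There is really no obstacle to this argument; the entire content of the corollary has already been established in the preceding theorem, and the corollary simply records the set-theoretic consequence. The only thing to take a little care with is making sure the notation $\mathcal{L}, \mathcal{L}_+, \mathcal{L}_-$ is explicitly identified with the classes of integrable functions defined via Definition \ref{def:upper_lower} applied to $|S|$, $S^+$, and $S^-$ respectively, so that the theorem's hypothesis about $S^{\pm}$-integrability matches membership in $\mathcal{L}_{\pm}$.
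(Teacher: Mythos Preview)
Your proposal is correct and matches the paper's treatment: the corollary is stated immediately after the theorem with no separate proof, since the paper has already defined $\mathcal{L}_+$, $\mathcal{L}_-$, and $\mathcal{L}$ as the classes of $S^+$-, $S^-$-, and $|S|$-integrable functions respectively, so the biconditional of the theorem is exactly the set equality.
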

    
    \begin{definition}
        Define $\mathcal{S}: \mathcal{L} \rightarrow \mathbb{R}$ by 
        \[
            \mathcal{S}(x)= \mathcal{S}_+ (x) - \mathcal{S}_- (x).
        \]
    \end{definition}
    \begin{proposition}
        The functional $\mathcal{S}$ extends $S$ and satisfies (S1), (S2), and (S3) of 
        Definition \ref{def:S-integral}.
    \end{proposition}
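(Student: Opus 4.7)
The plan is to verify each of the four required properties in turn, relying directly on the fact that $\mathcal{S}_+$ and $\mathcal{S}_-$ are (extended) $I$-integrals together with the identity $S = S^+ - S^-$.

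First, to show $\mathcal{S}$ extends $S$, I would restrict to $x \in T_0$. Since the Daniell extension $\mathcal{S}_+$ agrees with $S^+$ on $T_0$ and likewise $\mathcal{S}_-$ agrees with $S^-$ on $T_0$, we get $\mathcal{S}(x) = S^+(x) - S^-(x)$ for $x \in T_0$. Unwinding Definition \ref{def:riesz_decomp}, $S^-(x) = S^+(x) - S(x)$, so $\mathcal{S}(x) = S(x)$, as required.

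Next I would address (S1): linearity is immediate because $\mathcal{S}_+$ and $\mathcal{S}_-$ are $I$-integrals on $\mathcal{L} = \mathcal{L}_+ \cap \mathcal{L}_-$ (by the preceding corollary) and hence satisfy (D1), so their difference $\mathcal{S}$ is linear on $\mathcal{L}$. For (S2), let $(x_n) \subset \mathcal{L}$ with $(x_n) \searrow 0$. Because both $\mathcal{S}_+$ and $\mathcal{S}_-$ satisfy (D2) on their respective extensions, and $(x_n)$ is a sequence in $\mathcal{L} \subset \mathcal{L}_+ \cap \mathcal{L}_-$, we conclude $\lim_{n\to\infty} \mathcal{S}_+(x_n) = 0$ and $\lim_{n\to\infty} \mathcal{S}_-(x_n) = 0$, whence $\lim_{n\to\infty} \mathcal{S}(x_n) = 0$.

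The only nontrivial point is (S3), and this is where the functional $|\mathcal{S}|$ that we built above is essential. My candidate for the bounding function is $M := |\mathcal{S}|$ restricted to the nonnegative functions in $\mathcal{L}$. Monotonicity of $M$ follows from (D3) applied to $\mathcal{S}_+$ and $\mathcal{S}_-$ (since an $I$-integral is order preserving by (D1)+(D3)): if $\varphi \leq \psi$ are nonnegative in $\mathcal{L}$, then $\mathcal{S}_\pm(\varphi) \leq \mathcal{S}_\pm(\psi)$, and adding gives $|\mathcal{S}|(\varphi) \leq |\mathcal{S}|(\psi)$. For the bound itself, given $x \in \mathcal{L}$, the inequalities $x \leq |x|$ and $-x \leq |x|$ combined with the monotonicity of each $\mathcal{S}_\pm$ give $|\mathcal{S}_\pm(x)| \leq \mathcal{S}_\pm(|x|)$, and therefore
\[
|\mathcal{S}(x)| = |\mathcal{S}_+(x) - \mathcal{S}_-(x)| \leq \mathcal{S}_+(|x|) + \mathcal{S}_-(|x|) = |\mathcal{S}|(|x|) = M(|x|).
\]
No step is a serious obstacle; the only thing to be careful about is invoking the correct extension theorems so that $\mathcal{S}_+$ and $\mathcal{S}_-$ are genuinely order preserving on all of $\mathcal{L}$, which is guaranteed by the fact that the Daniell extension of an $I$-integral is itself an $I$-integral (Remark \ref{remark:extend_d_int}).
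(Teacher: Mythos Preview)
Your proposal is correct and follows essentially the same approach as the paper: both show the extension property via $S = S^+ - S^-$ on $T_0$, derive (S1) and (S2) from the fact that $\mathcal{S}$ is a difference of two $I$-integrals, and take $M = |\mathcal{S}|$ for (S3). Your treatment of (S3) is in fact more explicit than the paper's, which simply asserts that $|\mathcal{S}|$ fulfills the role of $M$ without writing out the monotonicity and bounding inequalities you supply.
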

    \begin{proof}
        Note that $\mathcal{S}$ is defined on $\mathcal{L}$ 
        which contains $T_0$ and that on $T_0$ we have 
        \[
            S(x)=S^+(x)-S^-(x)=\mathcal{S}^+(x)-\mathcal{S}^-(x)= \mathcal{S}(x).
        \]  

        Properties (S1) and (S2) of Definition \ref{def:S-integral} follow as $\mathcal{S}$ is a 
        difference between two $I$-integrals, and $|\mathcal{S}|$ fulfills the role
        of $M$ in (S3) of Definition \ref{def:S-integral}. 
    \end{proof}

\section{The applications by Norbert Wiener} \label{Section:Wiener}
One of the mathematicians who saw the value in Daniell's 
ideas was the American mathematician and philosopher Norbert Wiener (1894-1964). 
We present in this section a brief summary of the development by Wiener. Te keep the 
material in this section brief, we do not treat everything in the fullest detail. 

\subsection{Extending Dirichlet problems}
In 1923, Wiener published the article 
``Discontinuous Boundary Conditions and the Dirichlet Problem'' detailing an application 
of Daniell's integral to extending the family of solutions to the Dirichlet
problem for the Laplace equation \cite{wiener_dirichlet}. This application provides a context 
where the integral, rather than the measure, is the most natural tool to use. 

\begin{problem}[The Dirichlet Problem in two dimensions] \label{prob:dirichlet}
Let $D$ be a bounded domain in $\mathbb{R}^2$. 
We denote the boundary of $D$ by $\partial D$.
Let $g: \partial D \rightarrow \mathbb{R}$ be continuous. 
For a twice differentiable function $u:D \rightarrow \mathbb{R}$, let
 $\Delta$ denote the Laplacian operator, i.e. 
\[
\Delta u:= \partial_1^2 u + \partial_2^2u .  
\]
Note that $\Delta$ is linear. 
Find a function $u\in C^2(D) \cap C(\overline{D})$ such that 
\begin{align}
\Delta u=0 \text{ on } D; \text{ and }  
\end{align}
\[
u=g \text{ on } \partial D.  
\]
\end{problem}
\begin{definition}
    A function $u \in C(D)$ that satisfies 
    \[
        \Delta u=0 \text{ on } D  
    \]
    is called {\it harmonic}.
\end{definition}

\begin{definition}\label{def:dir_integral}
    For the rest of this section, let $D$ be a bounded domain 
    (a connected, bounded, open set) such that for every continuous function $g$ on $\partial D$,
    we have a solution $u_g \in C^2(D) \cap C(\overline{D})$ satisfying Problem \ref{prob:dirichlet}. 
Let $x \in D$ be arbitrary and define the functional $I_x: C(\partial D) \rightarrow \mathbb{R}$
by 
\begin{align}
    I_x(g):=u_g(x).
\end{align} 
\end{definition}

As $C(\partial D)$ is a vector 
lattice, it therefore satisfies the conditions needed for $T_0$. 
We now need to show that $I_x$ is an $I$-integral, i.e.  
$I_x$ satisfies the conditions (D1)-(D3) of Definition \ref{def:I_Integral}. 
To this end we recall a well-known property of solutions to Problem \ref{prob:dirichlet}:
\begin{theorem}\textup{(Strong Maximal Principle, \cite[Ch. 2, Theorem~4]{evans1998partial})}

Suppose $u \in C^2(D)\cap C(\overline{D})$ is a solution to Problem \ref{prob:dirichlet},
then 
\[
    \max_{\overline{D}} u = \max_{\partial D} u.
\]
\end{theorem}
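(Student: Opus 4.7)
The plan is to prove the maximum principle via a standard perturbation argument combined with the second-derivative test at interior maxima. Since $\overline{D}$ is compact and $u$ is continuous on it, the maximum on $\overline{D}$ is attained. The inequality $\max_{\partial D} u \leq \max_{\overline{D}} u$ is immediate from $\partial D \subset \overline{D}$, so it suffices to establish the reverse inequality.

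First I would treat the strictly subharmonic case. Suppose $v \in C^2(D) \cap C(\overline{D})$ satisfies $\Delta v > 0$ throughout $D$. If $v$ attained its maximum over $\overline{D}$ at an interior point $x_0 \in D$, then by the second derivative test the Hessian $D^2 v(x_0)$ would be negative semidefinite, so
\[
\Delta v(x_0) = \operatorname{tr} D^2 v(x_0) \leq 0,
\]
contradicting $\Delta v > 0$. Hence the maximum of such a $v$ over $\overline{D}$ must be attained on $\partial D$.

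Next I would reduce the harmonic case to this strictly subharmonic case via a perturbation. For $\varepsilon > 0$, define $v_\varepsilon(x) := u(x) + \varepsilon\,|x|^2$. Since $\Delta|x|^2 = 4$ in two dimensions, $\Delta v_\varepsilon = 4\varepsilon > 0$ on $D$, so by the previous step
\[
\max_{\overline{D}} u \;\leq\; \max_{\overline{D}} v_\varepsilon \;=\; \max_{\partial D} v_\varepsilon \;\leq\; \max_{\partial D} u + \varepsilon \max_{\overline{D}} |x|^2 .
\]
Because $D$ is bounded, $\max_{\overline{D}}|x|^2$ is finite, so letting $\varepsilon \to 0^+$ yields $\max_{\overline{D}} u \leq \max_{\partial D} u$, giving equality.

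The main obstacle is the appeal to the Hessian being negative semidefinite at an interior maximum; this is the crucial analytic input, and it relies on the hypothesis $u \in C^2(D)$. Once that pointwise fact is in hand, the perturbation trick is routine. An alternative route is through the mean value property of harmonic functions combined with a connectedness argument on the set $\{x \in D : u(x) = \max_{\overline{D}} u\}$, which actually yields the stronger statement that $u$ is constant whenever the maximum is attained in the interior; however, that approach requires first establishing the mean value property, whereas the perturbation argument above is entirely self-contained and suffices for the stated conclusion.
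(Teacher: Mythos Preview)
Your argument is correct and is the standard perturbation proof of the weak maximum principle. Note, however, that the paper does not supply its own proof of this statement: it is quoted with a citation to Evans and used as a black box, so there is no in-paper proof to compare against. Your approach is exactly the one Evans uses for the weak maximum principle, and it is entirely adequate for the conclusion $\max_{\overline{D}} u = \max_{\partial D} u$ that the paper actually states and uses.
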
   
From this, one notes that 
if $u$ attains its maximum in $D$ then $u$ must be constant on $\overline{D}$. 
It also follows 
by replacing $u$ with $-u$ that 
\[
\min_{\overline{D}} u = \min_{\partial D} u.
\]    

Furthermore, if $u_1$ and $u_2$ both satisfies Problem \ref{prob:dirichlet} with the same 
boundary condition $g$, then $u_1=u_2$. This is 
because, by linearity, $v=u_1-u_2$ will satisfy
Problem \ref{prob:dirichlet} with boundary condition 0 
and so $v=0$. Consequently, $u_1=u_2$ and 
we have uniqueness of the solutions to Problem \ref{prob:dirichlet}.
Therefore $I_x$ from Definition \ref{def:dir_integral} is indeed well-defined.

\begin{theorem}
For each $x \in D$, the map $I_x$ from Definition \ref{def:dir_integral} is an $I$-integral. 
\end{theorem}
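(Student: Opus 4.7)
The plan is to verify in turn the three defining conditions (D1), (D2), and (D3) of an $I$-integral as stated in Definition \ref{def:I_Integral}. Throughout, the key tools will be linearity of the Laplacian, uniqueness of solutions to Problem \ref{prob:dirichlet} (which follows from the Strong Maximal Principle as already noted above), and the fact that $\partial D$ is compact since $D$ is a bounded domain.

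For (D1), I would argue as follows. Let $g,h \in C(\partial D)$ and $\alpha,\beta \in \mathbb{R}$. Consider the function $v := \alpha u_g + \beta u_h$. Since $u_g, u_h \in C^2(D) \cap C(\overline{D})$, the same holds for $v$. By linearity of the Laplacian, $\Delta v = \alpha \Delta u_g + \beta \Delta u_h = 0$ on $D$, and on $\partial D$ we have $v = \alpha g + \beta h$. Hence $v$ solves Problem \ref{prob:dirichlet} with boundary data $\alpha g + \beta h$, and by uniqueness $v = u_{\alpha g + \beta h}$. Evaluating at $x$ gives $I_x(\alpha g + \beta h) = \alpha I_x(g) + \beta I_x(h)$.

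For (D3), if $g \geq 0$ on $\partial D$, the minimum principle (obtained by applying the Strong Maximal Principle to $-u_g$) yields $u_g(y) \geq \min_{\partial D} g \geq 0$ for all $y \in \overline{D}$, and in particular $I_x(g) = u_g(x) \geq 0$.

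The main obstacle is (D2), since it involves passing a limit through the (possibly nontrivial) solution map $g \mapsto u_g$. Suppose $(g_n) \subset C(\partial D)$ with $(g_n) \searrow 0$ pointwise on $\partial D$. Because $D$ is bounded, $\partial D$ is compact; the sequence $(g_n)$ is therefore a monotonically decreasing sequence of continuous functions on a compact set converging pointwise to the continuous function $0$. By Dini's theorem (Theorem \ref{thm:dini} in the Appendix), this convergence is uniform, so $M_n := \max_{\partial D} g_n \to 0$. Applying the Strong Maximal Principle to $u_{g_n}$ (which is nonnegative by (D3)) gives
\[
0 \leq u_{g_n}(y) \leq \max_{\partial D} u_{g_n} = \max_{\partial D} g_n = M_n
\]
for every $y \in \overline{D}$. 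In particular $0 \leq I_x(g_n) = u_{g_n}(x) \leq M_n \to 0$, which establishes (D2). Combining the three properties, $I_x$ is an $I$-integral on $T_0 := C(\partial D)$.
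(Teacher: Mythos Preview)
Your proof is correct and follows essentially the same route as the paper: linearity plus uniqueness for (D1), the minimum principle for (D3), and the maximum principle bound $0 \leq I_x(g_n) \leq \max_{\partial D} g_n$ for (D2). The only substantive difference is that you explicitly invoke Dini's theorem (using compactness of $\partial D$) to justify $\max_{\partial D} g_n \to 0$, whereas the paper simply asserts this convergence; your version is the more careful one on that point.
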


\begin{proof}

Let $x\in D$. 
To verify (D1) of Definition \ref{def:I_Integral} for $I_x$,
consider arbitrary $g_1, g_2 \in C(\partial D)$ and $\alpha, \beta \in \mathbb{R}$
with $u_1$ being the solution to Problem \ref{prob:dirichlet} with boundary condition $g_1$ and 
$u_2$ being the solution corresponding to $g_2$. Then by linearity of $\Delta$ we have 
\[
\Delta(\alpha u_1 + \beta u_2)= \alpha \Delta u_1 + \beta \Delta u_2 =0 \text{ on } D,
\]
and 
\[
\alpha u_1 + \beta u_2 = \alpha g_1 + \beta g_2   \text{ on } \partial D.
\]
By uniqueness of the solution we then have $u_{\alpha g_1 + \beta g_2}=\alpha u_{g_1}+\beta u_{g_2}$, 
and consequently 
\[
I_x(\alpha g_1 + \beta g_2)=  u_{\alpha g_1 + \beta g_2}(x)=\alpha u_{g_1}(x)+\beta u_{g_2}(x)
= \alpha I_x(g_1)+ \beta I_x(g_2).
\]
Since $g_1, g_2 \in C(\partial D)$ were arbitrary, we have that 
$I_x$ is linear, proving (D1) of Definition \ref{def:I_Integral}.

To verify (D3) of Definition \ref{def:I_Integral}, let $g \in C(\partial D)$ be such that $g \geq 0$. 
We then have
\[
0 \leq \min_{\partial D} g =\min_{\partial D} u_g =\min_{\overline{D}} u_g \leq u_g(x) =I_x(g),
\]
which shows (D3) of Definition \ref{def:I_Integral}.

Finally, if $ (g_n) \subset C(\partial D)$ such that $ (g_n) \searrow 0$ then 
for each $n \in \mathbb{N}$ we have $0 \leq  g_n- g_{n+1}$, 
and hence that 
\[
    0 \leq I_x(g_n- g_{n+1})=I_x(g_n)-I_x(g_{n+1}).
\]
Note also that for each $n \in \mathbb{N}$ the function $g_n \geq 0$ and hence by what we have 
already shown, $0 \leq I_x(g_n)$.
Consequently, for each $n\in \mathbb{N}$, we have 
\begin{align}\label{eqn:descend}
0 \leq I_x(g_{n+1})\leq I_x(g_n).
\end{align}
To show that $\lim_{n\rightarrow \infty} I_x(g_n) =0$, note that 
\[
0 \leq I_x(g_n) = u_{g_n}(x) \leq \max_{\overline{D}} u_{g_n} = \max_{\partial D} u_{g_n}=\max_{\partial D} g_n.
\]
As $(g_n) \searrow 0$, we have that $(\max_{\partial D} g_n) \searrow 0$ and therefore
$\lim_{n\rightarrow \infty} I_x(g_n) =0$. This together with \eqref{eqn:descend} proves (D2) of Definition \ref{def:I_Integral}.
\end{proof}

The theory of Daniell now applies. For each $x \in D$ there
exists an extended vector lattice of functions $\mathcal{L}_x$ on $\partial D$. 
Furthermore, the functional $I_x$ extends to a Daniell Integral $\int_x : \mathcal{L}_x \rightarrow \mathbb{R}$. 
We now define a function that may be viewed as a solution to Problem \ref{prob:dirichlet}
with more general boundary conditions. 
\begin{definition}
If $f \in \bigcap_{x \in D} \mathcal{L}_x$, define the function $v_f:D \rightarrow \mathbb{R}$
by 
\[
    v_f(x):= \int_x f.
    \]  
\end{definition}
Note that the condition $f \in \bigcap_{x \in D} \mathcal{L}_x$ means that $f$ is 
$I_x$-integrable for each $x \in D$. We now prove a theorem which says that the requirement
 $f \in \bigcap_{x \in D} \mathcal{L}_x$
is superficial and that we only require $f$ to be $I_x$-integrable for 
one element in $D$. 

To do this, we first cite two theorems due to Harnack:
\begin{theorem}\textup{(Harnack's Inequality, \cite[Ch. 2, Theorem~11]{evans1998partial})}\label{thm:harnack_ineq}
    Let $\Omega \subset \mathbb{R}^2$ be a connected domain and let $u: \Omega \rightarrow \mathbb{R}$
    be harmonic and nonnegative. If $V\subset \Omega$ is open such that the closure 
    $\overline{V}\subset \Omega$, then there exists a constant $C>0$ depending only on 
    $\Omega$ and $V$, such that $ \sup_{x \in V} u(x) \leq C \inf_{x \in V} u(x)$.
\end{theorem}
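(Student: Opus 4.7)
The plan is to follow the classical two-stage argument from potential theory: first establish a local Harnack-type inequality on small balls using the mean value property for harmonic functions, then bootstrap it to the global statement on $V$ via a compactness-and-chaining argument that exploits the connectedness of $\Omega$.

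For the local step, I would suppose $B(z, 4r) \subset \Omega$ and take arbitrary points $x, y \in B(z, r)$. Then $|x-y| < 2r$, so $B(y, r) \subset B(x, 3r) \subset B(z, 4r) \subset \Omega$. Since $u$ is harmonic and nonnegative, applying the mean value property on $B(x, 3r)$ and using $u \geq 0$ to shrink the domain of integration gives
\[
u(x) \;=\; \frac{1}{|B(x,3r)|}\int_{B(x,3r)} u \, dA
\;\geq\; \frac{1}{|B(x,3r)|}\int_{B(y,r)} u \, dA
\;=\; \frac{|B(y,r)|}{|B(x,3r)|}\, u(y)
\;=\; \tfrac{1}{9}\, u(y).
\]
Swapping $x$ and $y$ gives the reverse bound as well, so $\sup_{B(z,r)} u \leq 9 \inf_{B(z,r)} u$, with a constant depending only on the ambient dimension (which is $2$ here).

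To globalise, set $\delta := \text{dist}(\overline{V}, \partial\Omega)$, which is strictly positive since $\overline{V}$ is compact and contained in the open set $\Omega$, and put $r := \delta/8$. Cover $\overline{V}$ by finitely many balls $B(z_1, r), \ldots, B(z_N, r)$ with centres $z_i \in \overline{V}$; each four-fold enlargement $B(z_i, 4r)$ then lies in $\Omega$. Connectedness (hence path-connectedness, since $\Omega$ is open in $\mathbb{R}^2$) of $\Omega$ ensures that any two centres $z_i, z_j$ can be joined by a continuous path in $\Omega$; the image of such a path is compact, and can itself be covered by finitely many balls of radius $r$ whose four-fold enlargements remain in $\Omega$. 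Applying the local inequality to each consecutive pair of overlapping balls along such a chain yields $u(z_j) \leq 9^M u(z_i)$ for some chain length $M$.

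The main obstacle is producing a single constant $C$ valid for all pairs $(x,y) \in V \times V$ simultaneously, and this is where the argument must be genuinely uniform rather than pointwise. I would handle it by showing that the chain length needed to connect any two points of $\overline{V}$ can be bounded by a constant $K$ depending only on $V$ and $\Omega$: take a common finite refinement of all the path-covers used above, note that the union of the corresponding enlarged balls lies in $\Omega$, and use compactness of $\overline{V}$ to conclude that finitely many such balls suffice to link any two of the centres $z_i$. Any point $x \in V$ lies in some $B(z_i, r)$ from the initial cover, so chaining from $x$ to $z_i$, from $z_i$ to the $z_j$ containing $y$, and from $z_j$ to $y$ gives $u(y) \leq 9^{K+2}\, u(x)$, and taking $C := 9^{K+2}$ completes the proof.
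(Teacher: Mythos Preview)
The paper does not supply its own proof of Harnack's inequality; the result is quoted from Evans' textbook and used as a black box in the argument that follows. Your proposal is precisely the classical mean-value-plus-chaining proof one finds there, and it is correct in outline.

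Two small points are worth tightening. First, you tacitly assume that $\overline{V}$ is compact (needed both for $\delta>0$ and for the existence of a finite subcover); this does not literally follow from the hypothesis $\overline{V}\subset\Omega$ as stated in the paper, though it does hold in the only place the theorem is invoked, where the ambient domain $D$ is bounded. Second, the phrase ``common finite refinement of all the path-covers'' in your uniform-chain-length step is vague. The cleanest way to close this is to observe that there are only finitely many pairs $(z_i,z_j)$ of centres, hence only finitely many connecting chains are ever needed, and $K$ may simply be taken as the maximum of their lengths; no refinement is required.
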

\begin{theorem}\textup{(Harnack's Theorem, \cite[Ch. 11, Theorem~11.11]{rudin2006real})}\label{thm:harnack}
    Let $G \subset \mathbb{R}^2$ be a bounded and connected domain. 
    If $u_n: G \rightarrow \mathbb{R}$ is an increasing sequence of harmonic functions 
    on $G$, then either $\lim_{n \rightarrow \infty} u_n(x)$ is 
    finite for every $x \in G$ or $\lim_{n \rightarrow \infty} u_n(x)$ is infinite 
    for every $x \in G$. Furthermore, in the case when the limits are all finite, then 
    \[
        u:= \lim_{n \rightarrow \infty} u_n  
        \]
        is harmonic.
\end{theorem}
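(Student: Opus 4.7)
The plan is to establish the dichotomy by a connectedness argument, then promote pointwise convergence to local uniform convergence via Harnack's inequality (Theorem \ref{thm:harnack_ineq}), and finally deduce harmonicity of the limit from the mean value property.

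Since $(u_n)$ is increasing, the pointwise limit $u(x) := \lim_{n \to \infty} u_n(x)$ exists in $(-\infty, \infty]$ at every $x \in G$. Define
\[
A := \set{x \in G}{u(x) < \infty} \quad \text{and} \quad B := \set{x \in G}{u(x) = \infty}.
\]
The main step is to show both $A$ and $B$ are open in $G$; the connectedness of $G$ then forces one of them to be empty, yielding the dichotomy. To do this, I would fix $x_0 \in G$ and pick a ball $V$ containing $x_0$ with $\overline{V} \subset G$. For $m > n$, the function $u_m - u_n$ is nonnegative and harmonic on $G$, so by Theorem \ref{thm:harnack_ineq} there is a constant $C > 0$ depending only on $V$ and $G$ such that
\[
\sup_{x \in V}(u_m(x) - u_n(x)) \leq C \inf_{y \in V}(u_m(y) - u_n(y)).
\]
Letting $m \to \infty$ (monotone convergence allows the supremum/infimum to pass inside) gives
\[
\sup_{x \in V}(u(x) - u_n(x)) \leq C \inf_{y \in V}(u(y) - u_n(y)),
\]
which shows that finiteness (respectively, infiniteness) of $u$ at $x_0$ propagates to all of $V$; hence $A$ and $B$ are open.

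For the second assertion, assume $A = G$. Fix $x_0 \in G$ and a ball $V \ni x_0$ as above. From the inequality displayed at the end of the previous paragraph, applied with the roles of $x$ and $y$ exchanged, we get
\[
0 \leq u(x) - u_n(x) \leq C(u(x_0) - u_n(x_0)) \qquad (x \in V).
\]
Since $u_n(x_0) \to u(x_0)$ and this value is finite, the right-hand side tends to $0$, so $u_n \to u$ uniformly on $V$. Thus the convergence is locally uniform on $G$. Harmonicity of $u$ then follows because each $u_n$ satisfies the mean value property on every disc $\overline{D(x, r)} \subset G$, and local uniform convergence lets us pass to the limit in the mean value integral; a function continuous on $G$ that satisfies the mean value property on all sufficiently small discs is harmonic, completing the proof.

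The main obstacle I anticipate is the delicate step of upgrading the conclusion of Harnack's inequality (a sup/inf bound) to a genuine locally uniform convergence statement, because one must ensure the constant $C$ depends only on the geometric data $(V, G)$ and not on the index $n$, and one must also justify the passage $m \to \infty$ inside the supremum and infimum. Once this uniform control is in place, the dichotomy and the harmonicity of the limit are essentially automatic consequences of connectedness and the mean value property.
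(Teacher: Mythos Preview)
The paper does not give its own proof of this theorem; it is merely cited from Rudin and then invoked as a black box in the argument that follows. Your proposal is the standard proof via Harnack's inequality and connectedness, and it is correct. One small remark: the phrase ``with the roles of $x$ and $y$ exchanged'' is unnecessary, since from $\sup_{V}(u-u_n)\le C\inf_{V}(u-u_n)$ and $x_0\in V$ you already obtain $u(x)-u_n(x)\le C\bigl(u(x_0)-u_n(x_0)\bigr)$ for all $x\in V$ directly.
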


These results are used in the proof of the following theorem.
\begin{theorem}
If $x \in D$ and $f \in \mathcal{L}_x$, then $f \in \mathcal{L}_y$ for all 
$ y \in D$. Furthermore, in this case $v_f$ is harmonic. 
\end{theorem}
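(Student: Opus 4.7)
The plan is to use the characterisation theorem (Theorem \ref{thm:approximation}) together with Harnack's inequality to transfer upper integrals between points of $D$. The first ingredient is a Harnack-type comparison: for any $x, y \in D$ there is a constant $C(x,y) > 0$ with $u(y) \leq C(x,y)\, u(x)$ for every nonnegative harmonic function $u$ on $D$. Since $D$ is open and connected, any two points lie in a common open set $V$ with $\overline{V} \subset D$, and Theorem \ref{thm:harnack_ineq} supplies such a $C(x,y)$.

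The key lemma then reads: for any nonnegative $h \in \overline{\mathbb{R}}^{\partial D}$,
\[
\overline{I_y}(h) \leq C(x,y)\, \overline{I_x}(h).
\]
To prove this, let $\varphi \in T_1$ with $h \leq \varphi$; because $h \geq 0$, automatically $\varphi \geq 0$. Choose $(x_n) \subset T_0 = C(\partial D)$ with $(x_n) \nearrow \varphi$, and replace $x_n$ by $x_n \vee 0$ to arrange $x_n \geq 0$. Each $u_{x_n}$ is then a nonnegative harmonic function on $D$, so $u_{x_n}(y) \leq C(x,y)\, u_{x_n}(x)$; passing to the limit yields $I_{1,y}(\varphi) \leq C(x,y)\, I_{1,x}(\varphi)$ (where $I_{1,z}$ denotes the extension of $I_z$ to $T_1$), and taking the infimum over admissible $\varphi$ gives the claim.

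The first assertion is now immediate. If $f \in \mathcal{L}_x$, then by Theorem \ref{thm:approximation} for each $\varepsilon > 0$ there is $g \in C(\partial D)$ with $\overline{I_x}(|f - g|) < \varepsilon / C(x,y)$; the lemma applied to the nonnegative function $|f - g|$ gives $\overline{I_y}(|f - g|) < \varepsilon$, and the converse direction of Theorem \ref{thm:approximation} forces $f \in \mathcal{L}_y$.

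For harmonicity of $v_f$, I would exhibit $v_f$ as a locally uniform limit of harmonic functions. Fix a compact $K \subset D$ and an open set $V$ with $K \cup \{x\} \subset V$ and $\overline{V} \subset D$; Harnack's inequality then provides a single constant $C_V$ for which the argument above gives $\overline{I_z}(h) \leq C_V\, \overline{I_x}(h)$ for all $z \in K$ and all nonnegative $h$. Select $(g_n) \subset C(\partial D)$ with $\overline{I_x}(|f - g_n|) < 1/n$; then for every $z \in K$,
\[
|v_f(z) - u_{g_n}(z)| = \Bigl|\int_z (f - g_n)\Bigr| \leq \int_z |f - g_n| \leq \overline{I_z}(|f - g_n|) \leq C_V/n,
\]
so $u_{g_n} \to v_f$ uniformly on $K$. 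Each $u_{g_n}$ is harmonic on $D$, and uniform limits on compact subsets of harmonic functions are harmonic (by the mean value characterisation), hence $v_f$ is harmonic. The main technical obstacle is the upper-integral comparison; the crucial observation is that when $h \geq 0$, any competitor $\varphi \in T_1$ with $\varphi \geq h$ is automatically nonnegative, so Harnack applies directly to the harmonic extensions of the approximating continuous boundary data.
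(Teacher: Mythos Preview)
Your proof is correct and takes a genuinely different route from the paper's.

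The paper proceeds by first handling the special case $f\in T_1$ via Harnack's theorem, and then, for general $f\in\mathcal{L}_x$, sandwiches $f$ between explicit sequences $h_n\leq f\leq g_n$ with $g_n,-h_n\in T_1$ and $I_{1,x}(g_n-h_n)\to 0$; Harnack's inequality is applied to the nonnegative harmonic functions $v_{g_n}-v_{h_n}$ on a small ball around $x$, giving uniform convergence there, and the conclusion is then propagated along paths by a chain-of-balls argument. Your approach instead isolates the single estimate $\overline{I_y}(h)\leq C(x,y)\,\overline{I_x}(h)$ for nonnegative $h$, obtained by pushing Harnack through the limit defining $I_{1}$, and then invokes the approximation characterisation (Theorem~\ref{thm:approximation}) once to transfer integrability and a second time to produce a sequence $(u_{g_n})$ of harmonic functions converging to $v_f$ uniformly on compacta. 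This is cleaner: it avoids the two-sided sandwich and the explicit path-chaining, and it makes transparent that the only analytic input is the Harnack comparison at the level of upper integrals. The paper's argument, on the other hand, stays closer to the raw definition of $\overline{I}$ and $\underline{I}$ and does not rely on Theorem~\ref{thm:approximation}, so it is slightly more self-contained within the section. Both yield harmonicity by the same mechanism (locally uniform limits of harmonic functions).
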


\begin{proof}
Let $x \in D$ and assume that $f \in \mathcal{L}_x$. 
Let $T_{1,x}$ be as in Definition \ref{definition:Tinc}. Note that there is no real dependence 
on $x$ in this definition, however we keep the subscript as a reminder that we are specifically working 
with the $I$-integral $I_x$.
We first treat the case when $f \in T_{1,x}$. There exists 
a sequence $(g_n) \subset C(\partial D)$ such that $(g_n)\nearrow f$. In this case 
\[
    \int_x f= \lim_{n \rightarrow \infty} I_x (g_n)= \lim_{n \rightarrow \infty} u_{g_n}(x).  
\]
Since we assumed that $f \in \mathcal{L}_x$, we know that $\int_x f< \infty$. 
Using Theorem \ref{thm:harnack}, since $f$ is (Daniell) integrable with respect to $I_x$, 
we have that $\int_x f= \lim_{n \rightarrow \infty} u_{g_n}(x)$ is finite
and we get that $\int_y f= \lim_{n \rightarrow \infty} u_{g_n}(y)$ is finite for each $y \in D$. 
Hence $f \in \mathcal{L}_y$ for each $y \in D$. Also note that the pointwise limit
$v_f =\lim_{n \rightarrow \infty} g_n$ is harmonic in $D$. \\

We now consider the general case where $f \in \mathcal{L}_x$. Let $n \in \mathbb{N}$. 
Let $I_{1,x}: T_{1,x} \rightarrow \mathbb{R} \cup \lbrace \infty \rbrace$ be as in 
Definition \ref{definition:seqi}. 
Recall that $f \in \mathcal{L}_x$ if and only if 
\[
    \overline{I}_x(f)=\underline{I}_x(f)=-\overline{I}_x(-f),   
\]
and that this common value is denoted $\int_x f$. Hence, for each $n \in \mathbb{N}$, there 
exists $g_n \in T_{1,x}$ with $f \leq g_n$ and
\begin{align}
    I_{1,x}(g_n)\leq \int_x f +\frac{1}{n}, \label{eqn:dirichlet1}
\end{align}
and there exists $h_n$ such that $-h_n \in T_1$ with $-f \leq -h_n$ and 
\begin{align}
    I_{1,x}(-h_n) \leq \int_x (-f) +\frac{1}{n}.  \label{eqn:dirichlet2}
\end{align}
Since $f$ is integrable, $I_{1,x}(g_n)$ and $I_{1,x}(-h_n)$ are both finite. 
Furthermore, since $g_n$ and $-h_n$ are both in $T_{1,x}$, we may conclude 
that both $h_n$ and $g_n$ are $I_x$-integrable. We may therefore write 
\[
    I_{1,x} (g_n) =\int_x g_n \text{ and } I_{1,x} (-h_n) = \int_x -h_n = -\int_x h_n.
\]
Since $T_{1,x}$ closed with respect to $\land$ and $\lor$, 
we may assume without loss of generality that the sequence $(g_n)$ is decreasing, 
and that the sequence $(h_n)$ is increasing.
Note that for each $n \in \mathbb{N}$ we have 
\[
    h_n \leq f \leq g_n,  
\]
and from what we have shown in the special case, that both
$v_{g_n}$ and $v_{h_n}$ are harmonic. Furthermore, it follows that for each $n \in \mathbb{N}$ 
and each $y \in D$ we have 
\[
    v_{h_n}(y)= \int_y h_n \leq \int_y g_n = v_{g_n}(y).
\]
Hence $v_{h_n}\leq v_{g_n}$.

Fix an open ball $B(x,a)$ in $D$ of radius $a>0$ around $x$ such that the closure 
$\overline{B(x,a)} \subset D$. 
Using Theorem \ref{thm:harnack_ineq} we conclude that 
for each $n \in \mathbb{N}$ and $y \in V$ we therefore have that $v_{g_n}-v_{h_n}$ is harmonic
on $D$ and that 
\begin{align}
    0 \leq v_{g_n}(y)-v_{h_n}(y) \leq \sup_{x \in B(x,a)} (v_{g_n}-v_{h_n}) 
    \leq C \inf _{x \in B(x,a)} (v_{g_n}-v_{h_n})
    \leq C (v_{g_n}(x)-v_{h_n}(x)). \label{eqn:dirichlet3}
\end{align}
From \eqref{eqn:dirichlet1} and \eqref{eqn:dirichlet2} we know that 
\[
    0 \leq v_{g_n}(x)-v_{h_n}(x) = \int_x g_n - \int_x h_n 
    \leq \int_x f + \int_x (-f) + \frac{2}{n} = \frac{2}{n}.
\]
Combining this with \eqref{eqn:dirichlet3} we get for all $n \in \mathbb{N}$ and all $y \in B(x,a)$
that 
\[
    0 \leq v_{g_n}(y)-v_{h_n}(y) \leq \frac{2C}{n}. 
\]
Therefore, there exists a $v : B(x,a) \rightarrow \mathbb{R}$ 
such that for each $y \in B(x,a)$ we have 
$$\lim_{n \rightarrow \infty} v_{h_n}(y)=v(y)$$
and 
$$\lim_{n \rightarrow \infty} v_{g_n}(y)=v(y).$$
Furthermore, on $B(x,a)$ this convergence 
is uniform. Therefore, we may conclude that $v$ is a harmonic function on $B(x,a)$. 
Also for each $n \in \mathbb{N}$, since $h_n \leq f \leq g_n$, for every $y \in B(x,a)$ we have 
\[
    v_{h_n}(y) \leq \underline{I}_y (f) \leq \overline{I}_y (f) \leq v_{g_n}(y).   
\]
Since both $v_{h_n}(y)$ and $v_{g_n}(y)$ converge to the same limit $v(y)$ as $n$ tends to infinity,
we conclude that $f$ is $I_y$-integrable and that $v_f(y)=\int_y f = v(y)$. Since $y\in B(x,a)$ 
is arbitrary we conclude that $v_f$ is defined and equal to $v$ on $B(x,a)$, and hence 
that $v_f$ is harmonic on $B(x,a)$. 

Let $z \in D$ be arbitrary. Since $D$, being a domain, is path connected, we pick a path from 
$x$ to $z$. A path in $\mathbb{R}^2$ is a compact set, and hence we may pick a finite string of overlapping open balls covering the path and repeat the argument
on each of them. As a consequence, we conclude that $v_f$ is harmonic on the neighbourhood around 
the path connecting
$x$ and $z$ and also, $f$ is (Daniell) integrable with respect to $I_z$. We may conclude that 
$f$ is (Daniell) integrable with respect to $I_z$ for every $z \in D$. Thus, we have proved that 
$f \in \mathcal{L}_x$ for some $x \in D$ implies $f \in \mathcal{L}_y$ for all $y \in D$. Furthermore, since
$v_f$ is harmonic in the neighbourhood of any path between two points in $D$,
we conclude that $v_f$ is harmonic in $D$. 
\end{proof}
This justifies our remark that the function $v_f$ can be seen as a solution to Problem \ref{prob:dirichlet}.
However, of note is that $v_f$ might not any longer be continuous on the boundary. 
This is discussed further in Wiener's paper \cite{wiener_dirichlet}.

\subsection{Stochastic processes}
Around the same time as his publication of \cite{wiener_dirichlet}, Wiener produced 
the first rigorous theory of Brownian motion. Today one defines Brownian 
motion using measure-theoretic tools on the space of continuous functions. 
However, at the time of Wiener, measure theory was still in its infancy. Furthermore, the 
probability axioms based on measure theory due to Andrey Kolmogorov was still a decade
away\footnote{
    Andrey Kolmogorov (1903-1987) presented the axioms for probability in the book 
    {\it Foundations of the Theory of Probability} which was 
    first published in German in 1933 \cite{kolmogorov1933grundlagen}.
}.
Consequently, Wiener developed the theory of Brownian motion with the use of the Daniell 
integral. \\

In his paper \cite{wiener_mean} Wiener mentioned that in Daniell's method,
defining the $I$-integral and a set $T_0$ of functions is left undetermined. 
To construct his Brownian motion, he therefore set out to choose a set $T_0$
and its corresponding $I$-integral. In this context, we take our set of elements
$X$ to be the set of continuous functions $f$ on the interval $[0,1]$ such that $f(0)=0$. 
Explicitly, we let
\[
    X=\set{f \in C([0,1])}{f(0)=0}.  
\]

Let $P_n= \lbrace t_0, t_1, \cdots, t_n \rbrace$ with 
\[
    0=t_0 <t_1< \cdots <t_n=1  
\]
be a partition of $[0,1]$ and let $\mathcal{B}_n= ( B_1, B_2, \cdots, B_n )$
be an ordered tuple of Borel sets of length $n$ in $\mathbb{R}$. 
Define $D(P_n, \mathcal{B}_n) \subset X$ 
by 
\[
    D(P_n, \mathcal{B}_n):= \set{f \in X}{f(t_i) \in B_i \text{ for each } i \in \lbrace 1,2,\cdots, n\rbrace }  
\]
Let $\mathcal{R}$ be 
\begin{align*}
    \mathcal{R}:= \lbrace D(P_n, \mathcal{B}_n) : \, &n \in \mathbb{N}, P_n \text{ is a partition of } [0,1], \\
    &\text{ and } \mathcal{B}_n \text{ is an ordered tuple of Borel sets.} \rbrace
\end{align*}

Define the function $\mu : \mathcal{R} \rightarrow [0,\infty]$ as follows: 
Let $D(P_n, \mathcal{B}_n) \in \mathcal{R}$ with $P_n= \lbrace t_0, t_1, \cdots t_n \rbrace$
a partition of $[0,1]$ and $\mathcal{B}_n= (B_1, B_2, \cdots B_n )$ a tuple 
of Borel sets. Define
\begin{align}   
    \mu D(P_n, \mathcal{B}_n):= \prod_{i=1}^n \frac{1}{\sqrt{2\pi (t_i-t_{i-1})}} 
    \int_{B_1} \int_{B_2} \cdots \int_{B_n} e^{-\frac{x_1^2}{t_1}}e^{-\frac{(x_2-x_1)^2}{t_2-t_1}}\cdots 
    e^{-\frac{(x_n-x_{n-1})^2}{t_n-t_{n-1}}} \, dx_1 dx_2 \cdots dx_n \label{eqn:wiener_measure}
\end{align}

We refer the readers to the Appendix (Definition \ref{def:sigma_ring}) for the definition of a ring.
\begin{proposition}
    The collection $\mathcal{R}$ is a ring of subsets of $X$. 
\end{proposition}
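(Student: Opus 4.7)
The plan is to verify the two defining properties of a ring of sets: closure under finite union and closure under relative difference. Nonemptiness is immediate: taking any partition and any tuple with some $B_i = \emptyset$ shows $\emptyset \in \mathcal{R}$. The central technical tool is passage to a common refinement of partitions.

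First, I would establish a representation principle: if $D(P_n, \mathcal{B}_n) \in \mathcal{R}$ and $P'$ is any partition of $[0,1]$ refining $P_n$, then $D(P_n, \mathcal{B}_n)$ equals $D(P', \mathcal{B}')$, where $\mathcal{B}'$ is obtained from $\mathcal{B}_n$ by inserting $\mathbb{R}$ (itself a Borel set) at each new partition point---the condition $f(s) \in \mathbb{R}$ being vacuous. Given two elements $D(P_n, \mathcal{B}_n)$ and $D(P_m, \mathcal{B}'_m)$, forming the common refinement $P = P_n \cup P_m$ thus lets us rewrite both as $D(P, \mathcal{C})$ and $D(P, \mathcal{C}')$ over a shared partition.

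With a shared partition $P = \{s_0, s_1, \ldots, s_k\}$, I would use the evaluation map $\phi_P : X \to \mathbb{R}^k$ defined by $\phi_P(f) = (f(s_1), \ldots, f(s_k))$, under which $D(P, \mathcal{C}) = \phi_P^{-1}(C_1 \times \cdots \times C_k)$. Since $\phi_P^{-1}$ commutes with all set operations, it suffices to analyse products of Borel sets in $\mathbb{R}^k$. For the relative difference of two products one can iterate the identity
\[
(A_1 \times A_2) \setminus (B_1 \times B_2) = \bigl((A_1 \setminus B_1) \times A_2\bigr) \sqcup \bigl((A_1 \cap B_1) \times (A_2 \setminus B_2)\bigr)
\]
and induct on $k$ to express the difference of two $k$-fold products of Borel sets as a finite disjoint union of products of Borel sets. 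Finite unions then follow from the identity $A \cup B = (A \setminus B) \sqcup B$.

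The main obstacle is that the output of these operations on a single $k$-fold product is typically a finite disjoint union of such products, not a single product. Hence the verification requires interpreting $\mathcal{R}$ as already closed under finite disjoint unions of the cylinders described---equivalently, allowing an ordered tuple $\mathcal{B}_n$ to code an arbitrary Borel subset of $\mathbb{R}^n$ rather than merely a Cartesian product of Borel sets. Under that natural interpretation, closure under union and relative difference is immediate, since the collection of Borel subsets of $\mathbb{R}^k$ is itself a $\sigma$-algebra, and set operations on $X$ pull back via $\phi_P^{-1}$ to the corresponding operations on Borel sets in $\mathbb{R}^k$.
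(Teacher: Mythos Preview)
Your approach via passage to a common refinement of partitions is exactly the one the paper takes. The paper also shows $X\in\mathcal{R}$ and checks closure under intersection and relative difference (so that $\mathcal{R}$ is in fact an algebra), in each case passing to the joint partition $P_n\cup Q_m$ and writing down an explicit tuple of Borel sets for the resulting cylinder.

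Where you part ways with the paper is in your treatment of the relative difference, and here you are more careful. The paper asserts that $D(P_n,\mathcal{B}_n)\setminus D(Q_m,\mathcal{C}_m)$ is a single cylinder $D(P_n\cup Q_m,\mathcal{D}_l)$ obtained by taking coordinatewise complements $\mathbb{R}\setminus C_j$ and differences $B_j\setminus C_k$. You correctly observe that this cannot work in general: the complement of a product $\prod_i C_i$ in $\mathbb{R}^k$ is not a product, and the difference of two $k$-fold products is only a finite \emph{union} of products, as your $k=2$ identity already shows. So your diagnosis of the obstacle is right on target, and the paper's formula for the relative difference is in fact incorrect as written. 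The resolution you propose---closing $\mathcal{R}$ under finite disjoint unions of the basic cylinders, or equivalently letting the data $\mathcal{B}_n$ encode an arbitrary Borel subset of $\mathbb{R}^n$ rather than just a rectangle---is the standard and correct fix. Under that reading, your argument via the evaluation map $\phi_P$ and the fact that $\phi_P^{-1}$ commutes with Boolean operations is clean and complete.
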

\begin{proof}
    We prove that $\mathcal{R}$ contains $X$ and that $\mathcal{R}$
     is closed under intersection and relative difference. 
     This would mean that $\mathcal{R}$ is an algebra of sets, 
     which is sufficient to conclude that $\mathcal{R}$ is a ring.

    To prove that $\mathcal{R}$ is closed under intersection, 
    let 
    $P_n=\lbrace t_0, \cdots, t_n \rbrace$ and $Q_m=\lbrace s_0, \cdots, s_m \rbrace$ be 
    partitions of $[0,1]$ and 
    $\mathcal{B}_n=( B_1, B_2, \cdots, B_n )$ and 
    $\mathcal{C}_m=( C_1, C_2, \cdots, C_m )$ 
    tuples of Borel sets and consider
    $D(P_n, \mathcal{B}_n), D(Q_m, \mathcal{C}_m) \in \mathcal{R}$. 
    Then $P_n\cup Q_m$ is also a partition of 
    $[0,1]$. Let the partition $P_n\cup Q_m$ be $\lbrace r_0, r_1, \cdots, r_l\rbrace$. 
    Define the tuple $\mathcal{D}_l= (D_1, D_2, \cdots, D_l)$ by 
    \[
        D_i= \begin{cases}
            B_j &\text{ if } r_i = t_j \in P_n \backslash Q_m\\
            C_j &\text{ if } r_i = s_j \in Q_m \backslash P_n\\
            B_j \cap C_k &\text{ if } r_i = t_j = s_k \in P_n \cap Q_m
        \end{cases}
    \]
    for $i = 1,2,\cdots l$. Then 
    \[
        D(P_n, \mathcal{B}_n) \cap D(Q_m, \mathcal{C}_m) = D(P_n \cup Q_m, \mathcal{D}_l) \in \mathcal{R}.
    \]

    To prove that $\mathcal{R}$ is closed under relative difference, 
    let $D(P_n, \mathcal{B}_n), D(Q_m, \mathcal{C}_m) \in \mathcal{R}$ be as in the previous case. 
    Define the tuple $\mathcal{D}_l= (D_1, D_2, \cdots, D_l)$ by 
    \[
        D_i= \begin{cases}
            B_j &\text{ if } r_i = t_j \in P_n \backslash Q_m\\
            \mathbb{R} \backslash C_j &\text{ if } r_i = s_j \in Q_m \backslash P_n\\
            B_j \backslash C_k &\text{ if } r_i = t_j = s_k \in P_n \cap Q_m
        \end{cases}
    \]
    for $i = 1,2,\cdots l$. Then 
    \[
        D(P_n, \mathcal{B}_n) \backslash D(Q_m, \mathcal{C}_m)
         = D(P_n \cup Q_m, \mathcal{D}_l) \in \mathcal{R}.
    \]

    Finally, note that $X=D(\lbrace 0,1 \rbrace, (\mathbb{R})) \in \mathcal{R}$. 
    Hence $\mathcal{R}$ is an algebra of sets.
    \end{proof}

    We refer the readers to the Appendix (Definition \ref{def:pre_measure}) for the definition of a 
    pre-measure.
    \begin{proposition}
        The function $\mu$ is a pre-measure.
    \end{proposition}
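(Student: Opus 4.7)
The plan is to verify the two clauses of Definition~\ref{def:pre_measure}, namely $\mu(\emptyset)=0$ and $\sigma$-additivity on $\mathcal{R}$, after first establishing well-definedness. A set $D(P_n,\mathcal{B}_n)\in\mathcal{R}$ has many cylindrical representations: one may always insert a new time point $t^*$ with Borel factor $\mathbb{R}$ without changing the underlying subset of $X$. That these alternate representations give the same value in \eqref{eqn:wiener_measure} is a consequence of the Chapman--Kolmogorov identity for Gaussian kernels, which follows from a routine Gaussian convolution (complete the square under the integral sign). Once well-definedness is in hand, $\mu(\emptyset)=0$ is immediate, since $\emptyset=D(\{0,1\},(\emptyset))$ makes the integrand vanish on an empty region of integration.

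For finite additivity, given pairwise disjoint $A_1,\ldots,A_k\in\mathcal{R}$ with $A=\bigsqcup_{i=1}^k A_i\in\mathcal{R}$, I refine every defining partition to a single common partition $P$ (padding each Borel tuple with copies of $\mathbb{R}$). Well-definedness then lets me regard every $A_i$ and $A$ as a cylinder over the same partition $P$; disjointness of the $A_i$ in $X$ forces disjointness of the corresponding Borel subsets of $\mathbb{R}^{|P|}$, and additivity of $\mu$ reduces to linearity of the Lebesgue integral appearing in \eqref{eqn:wiener_measure}.

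The main obstacle is countable additivity, which I reduce to continuity from above: if $(A_n)\subset\mathcal{R}$ decreases with $\bigcap_n A_n=\emptyset$, then $\mu(A_n)\to 0$. After taking common refinements we may assume each $A_n$ is built on a partition $P_n$ with $P_1\subset P_2\subset\cdots$. If $\bigcup_n P_n$ is finite, the claim reduces to $\sigma$-additivity of a finite-dimensional Gaussian measure on $\mathbb{R}^{|P_n|}$. Otherwise, suppose for contradiction that $\mu(A_n)\geq\varepsilon>0$ for all $n$. Inner regularity of each finite-dimensional Gaussian measure produces, for every $n$, a cylinder $K_n\subset A_n$ defined by a compact Borel subset of $\mathbb{R}^{|P_n|}$ with $\mu(A_n\setminus K_n)<\varepsilon/2^{n+1}$; a telescoping estimate then yields $\mu(\bigcap_{k\leq n}K_k)\geq \varepsilon/2$, so this intersection is nonempty. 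Selecting $f_n$ from it and running a diagonal Bolzano--Weierstrass argument along the countable set $\bigcup_n P_n$ produces a subsequence converging at each $t\in\bigcup_n P_n$ to a function $f^*$ whose restriction to each $P_n$ lies in the compact defining set of $K_n$.

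The hardest step, and the real substance of the argument, is promoting $f^*$ to an actual element of $X$, that is, a continuous function on $[0,1]$ vanishing at $0$ that extends the prescribed values on $\bigcup_n P_n$. This requires a quantitative oscillation estimate exploiting the Gaussian structure of \eqref{eqn:wiener_measure}: by bounding the $\mu$-measure of sets of the form $\{f:\sup_{|s-t|<\delta}|f(s)-f(t)|>\eta\}$, one arranges for the compact approximants to force a uniform modulus of continuity on the selected $f_n$ restricted to $P_n$; this modulus is inherited by $f^*$ on the dense set $\bigcup_n P_n$, and the standard uniform continuity extension argument produces a continuous $f^*\in X$ lying in every $K_n\subset A_n$, contradicting $\bigcap_n A_n=\emptyset$. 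This oscillation estimate is essentially Wiener's central contribution in~\cite{wiener_mean}; once granted, the pre-measure property of $\mu$ on $\mathcal{R}$ follows.
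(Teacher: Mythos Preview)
Your outline is correct and substantially more careful than the paper's own argument. The paper disposes of countable additivity in a few lines by asserting that if $D=D(P_n,\mathcal{B}_n)$ is a countable disjoint union of cylinders $D_k\in\mathcal{R}$, then ``since for all $k$, $D_k\subset D$, we may assume without loss of generality that the partitions for the $D_k$'s are all the same,'' after which everything reduces to additivity of a single $n$-fold Lebesgue integral. That reduction is unjustified: the inclusion $D_k\subset D$ does not force $D_k$ to be expressible as a cylinder over the fixed finite partition $P_n$ (take $D=X=D(\{0,1\},(\mathbb{R}))$ and $D_1=\{f:f(\tfrac12)\in[0,1]\}$), so the paper's proof silently discards the genuinely infinite-dimensional content of the statement and is, as written, incomplete.

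You, by contrast, locate the real difficulty exactly where it lies: after well-definedness, finite additivity, and the finite-partition case, one must show that a diagonal limit along the countable time set $\bigcup_n P_n$ can be realised by an element of $X$, i.e.\ by a \emph{continuous} path, and this requires a quantitative oscillation estimate---precisely Wiener's contribution in \cite{wiener_mean,wiener_differential}. Two minor remarks on your sketch: the set $\{f:\sup_{|s-t|<\delta}|f(s)-f(t)|>\eta\}$ is not itself in $\mathcal{R}$, so in practice one bounds its finite-partition cylinder approximations and passes to a limit; and $\bigcup_n P_n$ need not be dense in $[0,1]$, so the final step is interpolation (e.g.\ piecewise linear) rather than extension from a dense set. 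What your route buys is an actual proof; what the paper's buys is brevity at the expense of the argument.
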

\begin{proof}
    From the definition, it is clear that $\mu (\emptyset)=0$, and that $\mu$ is nonnegative. Let 
    $D=(P_n, \mathcal{B}_n) \in \mathcal{R}$ be the disjoint union of the countable collection 
    $(D_k) \subset \mathcal{R}$. Since for all $k$, $D_k \subset D$ we may assume, without loss of generality, 
    that the partitions for the $D_k$'s are all the same. Hence we may write $D_k= D(P_n, \mathcal{A}_n^{(k)})$
    with $\mathcal{A}_n^{(k)}=(A_1^{k},A_2^{(k)},\cdots, A_n^{(k)})$ for $k \in \mathbb{N}$.
    The assumption that the $D_k$'s are disjoint means that the 
    sets $A_1^{(k)} \times A_2^{(k)} \times \cdots \times A_n^{(k)}$ are all disjoint and hence 
    that the iterated integral in \eqref{eqn:wiener_measure} may be written 
    \begin{align*}        
        & \int_{B_1} \int_{B_2} \cdots \int_{B_n} 
            e^{-\frac{x_1^2}{t_1}}e^{-\frac{(x_2-x_1)^2}{t_2-t_1}}\cdots 
    e^{-\frac{(x_n-x_{n-1})^2}{t_n-t_{n-1}}} \, dx_1 dx_2 \cdots dx_n \\
    &= \int_{\bigcup_{k}A_1^{(k)}} \int_{\bigcup_{k}A_2^{(k)}} \cdots \int_{\bigcup_{k}A_n^{(k)}} 
        e^{-\frac{x_1^2}{t_1}}e^{-\frac{(x_2-x_1)^2}{t_2-t_1}}\cdots 
    e^{-\frac{(x_n-x_{n-1})^2}{t_n-t_{n-1}}} \, dx_1 dx_2 \cdots dx_n \\
    &= \sum_{k} \int_{A_1^{(k)}} \int_{A_2^{(k)}} \cdots \int_{A_n^{(k)}} 
                e^{-\frac{x_1^2}{t_1}}e^{-\frac{(x_2-x_1)^2}{t_2-t_1}}\cdots 
                e^{-\frac{(x_n-x_{n-1})^2}{t_n-t_{n-1}}} \, dx_1 dx_2 \cdots dx_n.
    \end{align*}
    From this it follows that 
    \[
        \mu (D) = \mu \left( \bigcup_k D_k\right) = \sum_k \mu (D_k).  
    \]
    Hence we conclude that $\mu$ is a pre-measure.
        
\end{proof}
Up to this point, this is the same as the modern construction. What one would normally 
do from here is to apply the Carath\'eodory Extension theorem to get a measure 
$\mu_W$ on the $\sigma$-algebra generated by $\mathcal{R}$.
However, Wiener originally used Daniell's method to construct his 
integral of a functional \cite[Chapter 10]{wiener_differential}. We also use Daniell’s theory.
Let $T_0$ be 
\[
    T_0 = \text{span}\set{\chi_R}{R \in \mathcal{R} \text{ and } \mu (R) < \infty},
\]
and define $I: T_0 \rightarrow \mathbb{R}$ as: 
for each $ \sum_{i=1}^n c_i \chi_{R_i} \in T_0$ let 
\[
    I\left(\sum_{i=1}^n c_i \chi_{R_i} \right) := \sum_{i=1}^n c_i \mu (R_i). 
\]
The fact that $T_0$ is an extended vector lattice and $I$ an $I$-integral follows from 
Lemma \ref{lemma:simple}. We may therefore extend $T_0$ and $I$ to an extended vector lattice 
$\mathcal{L}_W$ 
and a Daniell Integral $\int_W :\mathcal{L}_W \rightarrow \mathbb{R}$, respectively. The measure obtained using 
Theorem \ref{thm:DS} is the Wiener measure.

\section{Appendix}
Here we list some standard results of measure theory. 
A complete treatment of these results can be found in \cite{de_barra}.

\begin{definition}\label{def:sigma_ring}
    Let $X$ be a set. A class $\mathcal{R}$ of subsets of $X$ is called a {\it ring} if 
     $E,F \in \mathcal{R}$ implies that 
     \begin{enumerate}[(i)]
         \item $E \cup F \in \mathcal{R}$; and 
         \item $E \backslash F \in \mathcal{R}$.
     \end{enumerate}
     Additionally, if $\mathcal{R}$ satisfies the property %
     \begin{enumerate}[(i)] \setcounter{enumi}{2}
        \item if $(R_n)$ is a sequence of elements of $\mathcal{R}$ 
            then $\bigcup_n R_n \in \mathcal{R}$,
     \end{enumerate}
     then we call $\mathcal{R}$ a {\it $\sigma$-ring}.
\end{definition}
\begin{remark}
    Let $\mathcal{R}$ be a $\sigma$-ring. If $E,F \in \mathcal{R}$, then it also follows that $E \cap F \in \mathcal{R}$ since we have the 
    identity $$E \cap F= E\backslash (E \backslash F).$$ 
\end{remark}
\begin{definition}\label{def:pre_measure}
    Let $\mathcal{R}$ be a $\sigma$-ring.
    A function $\mu : \mathcal{R} \rightarrow \mathbb{R} \cup \lbrace \infty \rbrace$ 
    is called a {\it pre-measure} if 
    \begin{enumerate} [(i)]
        \item $\mu$ is nonnegative;
        \item $\mu (\emptyset) =0$; and 
        \item if $(E_n)$ is a disjoint sequence of sets in $\mathcal{R}$ such that 
            $\bigcup_n E_n \in \mathcal{R}$, then 
            \[
                \mu \left( \bigcup_n E_n \right)= \sum_n \mu (E_n).
            \]
    \end{enumerate}
    If $\mu (X) < \infty$ or $X$ is the union of a sequence of sets $(E_n) \subset \mathcal{R}$ such that 
    $\mu (E_n) < \infty$ for each $n \in \mathbb{N}$ then we call the pre-measure $\sigma$-finite.
\end{definition}
\begin{definition}\label{def:sigma_alg}
    Let $X$ be a set. A class $\mathcal{A}$ of subsets of $X$ is called a {\it $\sigma$-algebra} if 
     \begin{enumerate}[(i)]
         \item $X \in \mathcal{A}$;
         \item if $E \in \mathcal{A}$ then $X \backslash E \in \mathcal{A}$; and
         \item if $(A_n)$ is a sequence of elements of $\mathcal{A}$ then $\bigcup_n A_n \in \mathcal{A}$.
     \end{enumerate} 
\end{definition}
\begin{definition}
    Let $\mathcal{A}$ be a $\sigma$-algebra.
    A function $\mu : \mathcal{A} \rightarrow \mathbb{R} \cup \lbrace \infty \rbrace$ 
    is called a {\it measure} if it is a pre-measure when $\mathcal{A}$ is viewed as a ring.
\end{definition}
\begin{definition} \label{def:complete_measure}
    Let $X$ be a set and  $\mathcal{A}$ a $\sigma$-algebra on $X$. 
    If $\mu: \mathcal{A} \rightarrow \mathbb{R} \cup \lbrace \infty \rbrace$
    is a measure on $X$ then we say it is {\it complete} if 
    for all sets $E \in \mathcal{A}$ such that 
    $\mu E=0$ we have $F \subset E$ implies $F \in \mathcal{A}$.
\end{definition}
\begin{definition}\label{def:sigma_gen} Let $X$ be a set.
    We call the smallest $\sigma$-algebra containing a 
    specific class $\mathcal{C}$ of subsets of $X$ 
    the {\it $\sigma$-algebra generated by $\mathcal{C}$} and
     denote it by $\sigma(\mathcal{C})$. If $X= \mathbb{R}$ and $\mathcal{C}$ is the set 
    of all finite open intervals, we call the $\sigma$-algebra 
    generated by $\mathcal{C}$ the {\it Borel
    $\sigma$-algebra}.
\end{definition}
\begin{theorem}[Carath\'eodory Extension Theorem]
    Let  $X$ be a set and $\mathcal{R}$ be a ring of subsets of $X$. 
    Let $\mu : \mathcal{R} \rightarrow \mathbb{R} \cup \lbrace \infty \rbrace$ be a pre-measure. 
    There exists an extension of $\mu$ to the $\sigma$-algebra generated by $\mathcal{R}$. Furthermore,
    if $\mu$ is $\sigma$-finite, then this extension is unique. 
\end{theorem}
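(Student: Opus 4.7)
The proof splits into existence and uniqueness, and I would handle them separately.

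\textbf{Existence.} This is already available in the paper. I would invoke Theorem \ref{thm:caratheodory} from Section \ref{section:measure}, which proves existence of an extension via the Daniell--Stone theorem (Theorem \ref{thm:DS}). That construction produces a $\sigma$-algebra $\mathcal{A}$ containing $\mathcal{R}$, so $\sigma(\mathcal{R}) \subseteq \mathcal{A}$, together with a measure $\mu'$ on $\mathcal{A}$ satisfying $\mu'|_{\mathcal{R}} = \mu$. Restricting $\mu'$ to $\sigma(\mathcal{R})$ yields the required extension.

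\textbf{Uniqueness under $\sigma$-finiteness.} Suppose $\mu_1, \mu_2 : \sigma(\mathcal{R}) \rightarrow \mathbb{R} \cup \lbrace \infty \rbrace$ are two extensions of $\mu$. The plan is a standard monotone class / Dynkin $\pi$-$\lambda$ argument in two stages. First, I would reduce to the finite case by $\sigma$-finiteness: choose $(E_n) \subset \mathcal{R}$ with $X = \bigcup_n E_n$ and $\mu(E_n) < \infty$, then disjointify by setting $F_n := E_n \setminus \bigcup_{k<n} E_k$. Since $\mathcal{R}$ is closed under finite unions and differences, each $F_n \in \mathcal{R}$, the $F_n$ are pairwise disjoint, cover $X$, and satisfy $\mu(F_n) < \infty$. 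It then suffices to show that for each fixed $n$, the finite measures $A \mapsto \mu_1(A \cap F_n)$ and $A \mapsto \mu_2(A \cap F_n)$ agree on $\sigma(\mathcal{R})$, since countable additivity then gives
\[
\mu_1(A) = \sum_n \mu_1(A \cap F_n) = \sum_n \mu_2(A \cap F_n) = \mu_2(A).
\]

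Second, I would handle each finite piece. Fix $n$ and set $\nu_i(A) := \mu_i(A \cap F_n)$; these are finite measures on $\sigma(\mathcal{R})$ that agree on $\mathcal{R}$ (using that $\mathcal{R}$ is closed under intersection with $F_n$). Define
\[
\mathcal{D}_n := \lbrace A \in \sigma(\mathcal{R}) : \nu_1(A) = \nu_2(A) \rbrace.
\]
I would verify that $\mathcal{D}_n$ is a $\lambda$-system: it contains $F_n$; it is closed under proper differences by subtracting finite values; and it is closed under countable increasing unions by continuity from below (a consequence of countable additivity). Since $\mathcal{R}$ is a $\pi$-system contained in $\mathcal{D}_n$, Dynkin's $\pi$-$\lambda$ theorem yields $\sigma(\mathcal{R}) \subseteq \mathcal{D}_n$, which completes the uniqueness argument.

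\textbf{Main obstacle.} The delicate step is verifying the $\lambda$-system property, specifically closure under proper differences: this depends essentially on the measures taking finite values so that $\nu_i(B) - \nu_i(A)$ is unambiguous when $A \subseteq B$. This is precisely why $\sigma$-finiteness is needed, and why the theorem genuinely can fail without it. The remaining ingredients---disjointification of the cover, continuity from below, and the final summation---are routine bookkeeping built on countable additivity.
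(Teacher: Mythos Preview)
Your proposal is correct. Note, however, that the paper does not actually supply a proof for this Appendix statement: it is listed there as a standard result with a reference to an outside source. The existence half \emph{is} proved independently in the paper as Theorem~\ref{thm:caratheodory} via the Daniell--Stone machinery, and your invocation of that result for existence is precisely the paper's own route. For the uniqueness half the paper offers nothing, so there is no in-paper argument to compare against; your $\pi$--$\lambda$ reduction after localising to sets of finite measure is the standard approach and is sound.

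One small slip to fix: when verifying that $\mathcal{D}_n$ is a $\lambda$-system you wrote that it ``contains $F_n$'', but what a $\lambda$-system requires is $X \in \mathcal{D}_n$. This does hold, since $\nu_i(X) = \mu_i(X \cap F_n) = \mu_i(F_n) = \mu(F_n)$ for $i=1,2$ (because $F_n \in \mathcal{R}$ and both $\mu_i$ extend $\mu$), hence $\nu_1(X) = \nu_2(X)$ and $X \in \mathcal{D}_n$. With that adjustment the argument goes through unchanged.
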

\begin{remark}
     Recall that the Lebesgue $\sigma$-algebra $\mathcal{M}$ and Lebesgue measure $m$ on the set $\mathbb{R}$ 
     is obtained by applying the 
    Carath\'eodory extension theorem to the ring of intervals. Hence the Lebesgue measure 
    is a complete measure and $\mathcal{M}$ contains the Borel $\sigma$-algebra.
\end{remark}
\begin{definition}
    If $G \subset \mathbb{R}$, we say $G$ is a $G_\delta$-set if it is the intersection of countably many 
    open sets in $\mathbb{R}$. 
    If $F \subset \mathbb{R}$, we say $F$ is a $F_\sigma$-set if it is the union of countably many 
    closed sets in $\mathbb{R}$.
\end{definition}
\begin{theorem}[Regularity of the Lebesgue Measure] \label{thm:regularity}
    Let $\mathcal{M}$ be the $\sigma$-algebra of Lebesgue measurable subsets of $\mathbb{R}$. 
    Let $m: \mathcal{M}\rightarrow \mathbb{R} \cup \lbrace \infty \rbrace$ be the Lebesgue measure,
    and denote the Lebesgue outer measure by $m^*$.
    The following are equivalent for a set $E \subset \mathbb{R}$. 
    \begin{enumerate}[(i)]
        \item $E \in \mathcal{M}$.
        \item For all $\varepsilon >0$ there exists an open set $O \subset \mathbb{R}$ such that 
            $E \subset O$ and $m^*(O \backslash E) \leq \varepsilon$.
        \item There exists a $G_\delta$-set $G$ such that $E \subset G$ and $m^*(G\backslash E)=0$.
        \item For all $\varepsilon >0$ there exists a closed set $F \subset \mathbb{R}$ such that 
        $F \subset E$ and $m^*(E \backslash F) \leq \varepsilon$.
        \item There exists a $F_\sigma$-set $F$ such that $F \subset E$ and $m^*(E\backslash F)=0$.
    \end{enumerate}
\end{theorem}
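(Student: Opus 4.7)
The plan is to prove the five-way equivalence by establishing the cycle (i) $\Rightarrow$ (ii) $\Rightarrow$ (iii) $\Rightarrow$ (i), and then mirror the argument on the complement to obtain (i) $\Rightarrow$ (iv) $\Rightarrow$ (v) $\Rightarrow$ (i). Throughout I would use the definition of outer measure as an infimum over countable coverings by open intervals, the completeness of the Lebesgue measure (recall from the Appendix remark that it is complete), and the fact that open and closed subsets of $\mathbb{R}$ belong to $\mathcal{M}$.

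For (i) $\Rightarrow$ (ii), I would split into two cases. If $m(E)<\infty$, then given $\varepsilon>0$, by definition of $m^*$ one can find a countable cover of $E$ by open intervals $\{I_j\}$ with $\sum_j \ell(I_j) \leq m^*(E) + \varepsilon$; setting $O := \bigcup_j I_j$ gives an open set containing $E$ with $m(O\setminus E)\leq \varepsilon$. If $m(E) = \infty$, I would decompose $E = \bigcup_{n\in \mathbb{N}} E_n$ with $E_n := E \cap ([-n,n]\setminus[-(n-1),n-1])$ so each $E_n$ has finite measure, apply the previous case with tolerance $\varepsilon / 2^n$ to obtain open $O_n \supset E_n$, and set $O := \bigcup_n O_n$; countable subadditivity of $m^*$ then gives $m^*(O\setminus E)\leq \varepsilon$.

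For (ii) $\Rightarrow$ (iii), I would, for each $n$, choose $O_n$ open with $E\subset O_n$ and $m^*(O_n\setminus E)\leq 1/n$, and put $G := \bigcap_n O_n$. This $G$ is a $G_\delta$-set containing $E$, and $m^*(G\setminus E)\leq m^*(O_n\setminus E)\leq 1/n$ for every $n$, hence equals zero. For (iii) $\Rightarrow$ (i), I would observe that $G\setminus E$ has outer measure zero, so by completeness of $m$ it lies in $\mathcal{M}$; since $G$ itself is a countable intersection of open (hence Borel, hence measurable) sets, $E = G\setminus(G\setminus E)\in \mathcal{M}$. This closes the first cycle and shows (i), (ii), (iii) are equivalent.

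For the second cycle, I would exploit complementation. Given $E\in\mathcal{M}$ and $\varepsilon>0$, apply the already-proved implication (i) $\Rightarrow$ (ii) to $\mathbb{R}\setminus E \in \mathcal{M}$ to obtain an open $O \supset \mathbb{R}\setminus E$ with $m^*(O\setminus(\mathbb{R}\setminus E))\leq \varepsilon$; then $F := \mathbb{R}\setminus O$ is closed, $F\subset E$, and $E\setminus F = O\setminus(\mathbb{R}\setminus E)$, giving (iv). The implications (iv) $\Rightarrow$ (v) and (v) $\Rightarrow$ (i) proceed exactly as (ii) $\Rightarrow$ (iii) and (iii) $\Rightarrow$ (i), using countable unions of closed sets in place of countable intersections of open sets and again invoking completeness. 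The main obstacle I anticipate is the $\sigma$-finite bookkeeping in the infinite-measure case of (i) $\Rightarrow$ (ii); aside from that, every step is a standard approximation argument provided one is careful that $m^*$ coincides with $m$ on measurable sets so that additivity and subadditivity apply cleanly.
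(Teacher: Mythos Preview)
Your proposed proof is correct and follows the standard textbook route (cover-by-intervals for the finite case, $\sigma$-finite exhaustion for the infinite case, then intersect to pass to a $G_\delta$, and dualize via complements for the $F_\sigma$ half). There is nothing to compare against, however: the paper does not supply its own proof of this theorem. It is stated in the Appendix among ``standard results of measure theory'' with a blanket reference to \cite{de_barra}, and the text moves directly on to Theorem~\ref{thm:appendix_completion}. So your write-up stands on its own; just be aware that the paper treats this result as background rather than something it argues for.
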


\begin{theorem}\label{thm:appendix_completion}
    Let $\mathcal{M}$ be the Lebesgue $\sigma$-algebra on $\mathbb{R}$ and let $\mathcal{B}$ be the Borel $\sigma$-algebra. 
    The Lebesgue measure $m: \mathcal{M}\rightarrow \mathbb{R} \cup \lbrace \infty \rbrace$ 
    is complete. Furthermore, if $\mathcal{A}$ is another $\sigma$-algebra on $\mathbb{R}$ and 
    if $\mu : \mathcal{A}\rightarrow \mathbb{R} \cup \lbrace \infty \rbrace$ 
    is another complete measure such that $\mu = m$ on $\mathcal{B}$, then $\mathcal{M} \subset \mathcal{A}$ 
    and $\mu=m$ on $\mathcal{M}$.
\end{theorem}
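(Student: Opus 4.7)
The plan is to prove the two assertions of the theorem separately, using only the outer measure characterisation of Lebesgue measurability and the regularity theorem (Theorem \ref{thm:regularity}).

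For completeness of $m$, let $E \in \mathcal{M}$ with $m(E)=0$ and let $F \subset E$. I would first note that by monotonicity of the Lebesgue outer measure $m^*$, one has $m^*(F) \leq m^*(E)=m(E)=0$, so $m^*(F)=0$. To conclude $F \in \mathcal{M}$, I would appeal to the characterisation in Theorem \ref{thm:regularity}(ii): given $\varepsilon>0$, cover $F$ by countably many open intervals $(I_j)$ with $\sum l(I_j) < \varepsilon$ (possible since $m^*(F)=0$) and set $O = \bigcup_j I_j$. Then $F \subset O$ and $m^*(O\setminus F)\leq m^*(O)\leq \sum l(I_j) < \varepsilon$. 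Thus (ii) of Theorem \ref{thm:regularity} holds, so $F \in \mathcal{M}$. This shows $m$ is complete.

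For the second assertion, let $\mu$ on $\mathcal{A}$ be any complete measure agreeing with $m$ on $\mathcal{B}$. I would take an arbitrary $E \in \mathcal{M}$ and decompose it as a Borel set plus a subset of a Borel null set. Apply Theorem \ref{thm:regularity}(v) to $E$: there exists an $F_\sigma$-set $F \subset E$ with $m^*(E\setminus F)=0$; in particular $F \in \mathcal{B}$. Then apply Theorem \ref{thm:regularity}(iii) to the set $E\setminus F$ (which is Lebesgue measurable with $m(E\setminus F)=0$): there is a $G_\delta$-set $G$ with $E\setminus F \subset G$ and $m^*(G\setminus(E\setminus F))=0$, hence $m(G)=m(E\setminus F)+m^*(G\setminus(E\setminus F))=0$. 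Since $G \in \mathcal{B}$ and $\mu=m$ on $\mathcal{B}$, we get $\mu(G)=0$. Completeness of $\mu$ now forces $E\setminus F \in \mathcal{A}$, and therefore $E = F \cup (E\setminus F) \in \mathcal{A}$. This shows $\mathcal{M} \subset \mathcal{A}$.

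Finally, for the equality of measures on $\mathcal{M}$, I would use the same decomposition: $\mu(E)=\mu(F)+\mu(E\setminus F)$, where $\mu(F)=m(F)$ (because $F \in \mathcal{B}$) and $\mu(E\setminus F)=0$ (because $E\setminus F$ is a $\mu$-measurable subset of the $\mu$-null set $G$, hence $\mu(E\setminus F)\leq \mu(G)=0$). Thus $\mu(E)=m(F)=m(F)+m(E\setminus F)=m(E)$, concluding the proof. The only potentially delicate point is the correct invocation of Theorem \ref{thm:regularity} to sandwich an arbitrary Lebesgue measurable set between a Borel set contained in it and a Borel set differing from it by a null set, but this is immediate from parts (iii) and (v) of that theorem. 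No serious obstacles arise.
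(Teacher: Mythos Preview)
Your proof is correct and follows essentially the same strategy as the paper: use Theorem~\ref{thm:regularity} to trap $E$ between Borel sets differing from it by $m$-null sets, cover the null remainder by a Borel null set, and invoke completeness of $\mu$. The only cosmetic differences are that you approximate from inside by an $F_\sigma$-set (part~(v)) whereas the paper approximates from outside by a $G_\delta$-set (part~(iii)), and you supply an explicit argument for completeness of $m$ where the paper simply cites a reference.
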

\begin{proof}
    We defer the completeness of $m$ to \cite[Example 4, p. 30]{de_barra}. \\

    Let $E \in \mathcal{M}$. Then there exists a $G_\delta$-set $G$ such that $E \subset G$ and $m(G \backslash E)=0$.
    There also exists a $G_\delta$-set $D$ such that $G\backslash E \subset D$ and $m(D)=0$.
    Note that since $G\backslash E$ and $E$ are disjoint,
    \[
        m(G)= m(G \backslash E) + m(E) =m(E).
    \] 
    Since $D$ is a $G_\delta$-set, $D \in \mathcal{B}$, and we have that $m(D)=\mu (D)=0$. 
    By completeness of $\mu$ we conclude that $G \backslash E \in \mathcal{A}$. 
    Since $G$ is a $G_\delta$-set, we also 
    conclude that $G \in \mathcal{B}$ and that $m (G) = \mu (G)$. Hence,
     $E = G\backslash (G\backslash E) \in \mathcal{A}$ and 
     \[
        \mu (E)= \mu (G)- \mu(G \backslash E) = m (G) -0 = m(E).   
     \]
     Thus we conclude that $\mathcal{M} \subset \mathcal{A}$ and that the measures coincide on $\mathcal{M}$.
\end{proof}

\begin{theorem}[Dini's Theorem] \label{thm:dini}
    If $X$ is a compact topological space and $(f_n)$ is a monotone sequence of continuous 
    functions on $X$ that converges pointwise to a continuous limit $f$, then the convergence is uniform.
\end{theorem}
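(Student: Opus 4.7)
The plan is to reduce the statement to the special case of a monotone decreasing sequence tending to the zero function and then exploit compactness via an open cover argument. First I would observe that by replacing $f_n$ with $-f_n$ if the sequence is increasing, we may assume $(f_n)$ is decreasing. Setting $g_n := f_n - f$, the sequence $(g_n)$ consists of continuous nonnegative functions on $X$, is monotonically decreasing, and converges pointwise to $0$. Proving that $(g_n) \to 0$ uniformly is equivalent to the original claim, so the problem reduces to the case $f \equiv 0$ with $(g_n) \searrow 0$ pointwise.

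Next, I would fix $\varepsilon > 0$ and consider the sublevel sets
\[
U_n := \{x \in X : g_n(x) < \varepsilon\}.
\]
Continuity of $g_n$ ensures each $U_n$ is open, and the monotonicity $(g_n) \searrow 0$ gives the nesting $U_n \subset U_{n+1}$. Pointwise convergence to $0$ implies that for every $x \in X$ there exists some $n$ with $g_n(x) < \varepsilon$, so $\{U_n : n \in \mathbb{N}\}$ forms an open cover of $X$.

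The decisive step is to invoke compactness: a finite subcover $U_{n_1}, \dots, U_{n_k}$ exists, and by the nesting property, setting $N := \max\{n_1,\dots,n_k\}$ gives $U_N = X$. Thus $g_N(x) < \varepsilon$ for every $x \in X$, and monotonicity propagates this to $0 \leq g_n(x) < \varepsilon$ for all $n \geq N$ and all $x \in X$. Since $\varepsilon$ was arbitrary, this establishes uniform convergence of $(g_n)$ to $0$, and hence of $(f_n)$ to $f$.

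The argument is essentially straightforward, and there is no real obstacle beyond correctly setting up the reduction to a monotone sequence tending to zero; the only place care is needed is ensuring that the sublevel sets are indeed nested (which requires the monotonicity in the correct direction) and that compactness is applied to the open cover rather than to the space $X$ directly in some other way. Once these set-theoretic details are in place, the compactness hypothesis does all the work.
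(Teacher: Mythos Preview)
Your argument is correct and is the standard proof of Dini's Theorem. Note, however, that the paper does not actually supply a proof of this result: it appears in the Appendix among a list of standard facts cited without proof, so there is no paper-side argument to compare against. Your open-cover approach via the sublevel sets $U_n$ is exactly the classical route and is fully rigorous as written.
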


\begin{lemma} \label{lemma:disjoint_indicator}
    Let $\mathcal{R}$ be a ring of subsets of $X$.
    Let $\mu : \mathcal{R} \rightarrow \mathbb{R}\cup \lbrace \infty \rbrace$ be a pre-measure.
    Let 
    \[
        T := \text{span}\set{\chi_R}{R \in \mathcal{R} \text{ and } \mu (R) < \infty}.
    \]
    Define $I: T \rightarrow \mathbb{R}$ as follows: 
    For each $ \sum_{i=1}^n c_i \chi_{R_i} \in T$ let 
    \[
        I\left(\sum_{i=1}^n c_i \chi_{R_i} \right) := \sum_{i=1}^n c_i \mu (R_i). 
    \]
    If $x= \sum_{i=1}^n c_i \chi_{R_i} \in T$, then there exists 
        a collection of pairwise disjoint sets 
        $\lbrace B_1, B_2, \cdots, B_n \rbrace$ in $\mathcal{R}$ and real numbers 
        $\lbrace b_1, b_2,\cdots, b_n \rbrace$ such that $x= \sum_{k=1}^l c_k \chi_{B_k}$.
\end{lemma}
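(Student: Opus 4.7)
The plan is to rewrite $x$ in terms of the \emph{atoms} of the finite Boolean algebra generated inside $\mathcal{R}$ by $R_1, \ldots, R_n$. For each nonempty subset $S \subseteq \{1,2,\ldots,n\}$, I would define
\[
    B_S := \left( \bigcap_{i \in S} R_i \right) \setminus \left( \bigcup_{j \notin S} R_j \right).
\]
Since $\mathcal{R}$ is a ring of sets, it is closed under finite intersections (via the identity $E \cap F = E \setminus (E \setminus F)$), finite unions, and relative differences, so each $B_S$ lies in $\mathcal{R}$.

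Next I would check the two defining properties of the collection $\{B_S\}_{S \neq \emptyset}$. First, pairwise disjointness: if $S \neq S'$ then there is some index, say $i \in S \setminus S'$, and by construction every point of $B_S$ lies in $R_i$ while every point of $B_{S'}$ lies outside $R_i$, so $B_S \cap B_{S'} = \emptyset$. Second, the union $\bigcup_{S \neq \emptyset} B_S$ coincides with $\bigcup_{i=1}^n R_i$: given any $t \in \bigcup_i R_i$, the set $S(t) := \{i : t \in R_i\}$ is nonempty and $t \in B_{S(t)}$ by construction.

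With these atoms in hand, the representation is immediate. For any $t \in B_S$, we have $\chi_{R_i}(t) = 1$ if $i \in S$ and $0$ otherwise, so $x(t) = \sum_{i \in S} c_i$. For $t$ outside $\bigcup_i R_i$, every $\chi_{R_i}(t)$ vanishes and $x(t) = 0$. Hence, setting $b_S := \sum_{i \in S} c_i$ for each nonempty $S$, we obtain
\[
    x = \sum_{\emptyset \neq S \subseteq \{1,\ldots,n\}} b_S \, \chi_{B_S},
\]
which is the desired representation (relabelling the $2^n - 1$ nonempty subsets as $B_1, \ldots, B_l$ with $l = 2^n - 1$ and the coefficients as $b_1, \ldots, b_l$).

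There is no real obstacle here; the argument is a routine application of the inclusion–exclusion style decomposition, and the only point requiring care is verifying that each $B_S$ actually lies in $\mathcal{R}$, which reduces to repeatedly using closure of $\mathcal{R}$ under $\cap$ and $\setminus$. One should also note that the finiteness hypothesis $\mu(R_i) < \infty$ is inherited by each $B_S$ since $B_S \subseteq R_i$ for any $i \in S$, together with monotonicity of the pre-measure $\mu$, so the resulting linear combination indeed lies in $T$.
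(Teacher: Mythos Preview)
Your proof is correct. The paper takes a different but equally standard route: it argues by induction on $n$, applying the hypothesis to the first $n$ terms to obtain disjoint sets $B_1,\ldots,B_l$, and then splitting each $B_k$ into $B_k\setminus R_{n+1}$ and $B_k\cap R_{n+1}$ while adding the leftover piece $R_{n+1}\setminus\bigcup_k B_k$. Your atom construction does the whole job in one stroke and gives an explicit formula for both the sets and the coefficients, whereas the inductive argument is slightly more hands-on but avoids indexing over the $2^n-1$ subsets. Either way the essential content is the same closure properties of the ring that you identify; your remark that $\mu(B_S)<\infty$ by monotonicity is a nice extra check that the paper's proof leaves implicit.
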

\begin{proof}
    The proof is by induction.
    If $n=1$, then we $x = c\chi_R$ for some $R \in \mathcal{R}$ and 
    some scalar $c$ and we are done. 
    Hence assume the claim is valid for an integer $n\geq 1$. 
    Let $x= \sum_{i=1}^{n+1} c_i \chi_{R_i} \in T$. Note that 
    \[
        \sum_{i=1}^{n+1} c_i \chi_{R_i}=\sum_{i=1}^{n} c_i \chi_{R_i} + c_{n+1}\chi_{R_{n+1}},
        \]
    and hence we apply the inductive hypothesis on 
    $\sum_{i=1}^{n} c_i \chi_{R_i}$ to get disjoint 
    sets $\lbrace B_1, B_2, \cdots, B_l \rbrace$ in $\mathcal{R}$ 
    and real numbers $\lbrace b_1, b_2, \cdots, b_l \rbrace$ such that 
    \[
        x=\sum_{i=1}^{l} b_k \chi_{B_k} + c_{n+1}\chi_{R_{n+1}}.
        \]
        Now consider the collection of sets 
    \[
        \left\lbrace B_1\backslash R_{n+1}, B_2\backslash R_{n+1}, \cdots, 
            B_l \backslash R_{n+1}\right\rbrace \bigcup
        \left\lbrace B_1\cap R_{n+1}, B_2\cap R_{n+1}, \cdots, B_l \cap R_{n+1} \right\rbrace \bigcup
        \left\lbrace R_{n+1}\backslash \bigcup_{k=1}^l B_k
        \right\rbrace.
    \]
    Note that the sets in this collection are pairwise disjoint, 
    and that by the properties of a ring, all of these 
    are in $\mathcal{R}$. For convenience, denote for 
    $k = 1,2, \cdots, l$ the set $D_k=B_k\backslash R_{n+1}$, 
    the set $E_k=B_k\cap R_{n+1}$, and denote by 
    $F=R_{n+1}\backslash \bigcup_{k=1}^l B_k$. Then we have the representation 
    \[
        x=\sum_{k=1}^l b_k \chi_{D_k} +\sum_{k=1}^l(b_k+c_{n+1})\chi_{E_k}+c_{n+1}\chi_F,  
        \]
    as a linear combination over disjoint sets. 
    We conclude that the claim holds for all $n \in \mathbb{N}$.
\end{proof}

\bibliographystyle{unsrt}
\bibliography{Amalgamation}

\end{document}